\documentclass[12pt,reqno]{amsart}
\usepackage{mathrsfs}
\usepackage{amssymb,amsmath,amsfonts,amsthm}
\usepackage[numbers,sort&compress]{natbib}
\usepackage{graphicx}
\usepackage{bm}
\usepackage[all,cmtip,line]{xy}
\usepackage{array}
\usepackage{color}
\numberwithin{equation}{section}

\newcommand{\dt}{\partial_t}

\newcommand{\dz}[0]{\partial_z}

\newcommand{\dvh}[0]{\mathrm{div}_{h}\,}
\newcommand{\nablah}[0]{\nabla_{h}}
\newcommand{\deltah}[0]{\Delta_{h}}

\newcommand{\subeqref}[2]{$\eqref{#1}_{#2}$}

\newtheorem{theorem}{Theorem}[section]
\newtheorem{corollary}[theorem]{Corollary}

\newtheorem{proposition}[theorem]{Proposition}

\everymath{\displaystyle}

\allowdisplaybreaks
\begin{document}

\title[ ]
{Local well-posedness of strong solutions to the non-isentropic compressible primitive equations with vertical diffusion}

\author{Rupert Klein}
\address{Rupert Klein, Institut f\"ur Mathematik, Freie Universit\"at Berlin, Berlin, Germany
}
\email{rupert.klein@math.fu-berlin.de}

\author{Jinkai Li}
\address{Jinkai Li, South China Research Center for Applied Mathematics and Interdisciplinary Studies, School of Mathematical Sciences, South China Normal University, Zhong Shan Avenue West 55, Guangzhou 510631, P. R. China}
\email{jklimath@m.scnu.edu.cn; jklimath@gmail.com}

\author{Xin Liu}
\address{Xin Liu, Department of Mathematics, Texas A\&M University, College Station, TX 77843-3368, USA}
\email{xliu23@tamu.edu}

	\author{Edriss S. Titi}
\address{Edriss S. Titi, Department of Mathematics, Texas A\&M University, College Station, TX 77840, USA.
Department of Applied Mathematics and Theoretical Physics, University of Cambridge,
Cambridge CB3 0WA, UK. Department of Computer Science and Applied Mathematics,
Weizmann Institute of Science, Rehovot 76100, Israel.}
\email{titi@math.tamu.edu; Edriss.Titi@damtp.cam.ac.uk}

\date{
Feb 23, 2026
}

\begin{abstract}
Due to the absence of dynamical equation in the vertical momentum component of the primitive equations (PEs) of atmospheric dynamics, the
vertical component of the velocity can be recovered only from the information on the other physical quantities, while utilizing the hydrostatic balance.
This causes one spatial derivative loss while leads to a stronger nonlinearity, comparing to the classic compressible Navier-Stokes equations.
As a result, the mathematical analysis on the compressible primitive equations is mathematically more challenging than that on the compressible Navier-Stokes
equations. In this paper, we consider the initial-boundary value problem to the non-isentropic compressible primitive equations with only vertical diffusion for the temperature, but without gravity. Local existence and uniqueness as well as the
continuous dependence on the initial data of strong
solutions are established for any suitably regular initial data. The initial velocity and pressure are assumed here to be of one order derivative higher regularity than that of the initial density.
 \end{abstract}

\keywords{non-isentropic, compressible primitive equations; vertical diffusivity; local well-posedness.}

\subjclass[2020]{
76N10; 
35D35; 
76N06 
}

\maketitle

\allowdisplaybreaks
\section{Introduction}

The compressible primitive equations, formally obtained as the hydrostatic approximation of the compressible Navier-Stokes equations by replacing the vertical momentum balance equation with the hydrostatic balance
equation, are the fundamental equations for modern meteorological study (see, e.g., \cite[Chapter 4]{Richardson1965}). In particular, it is the starting point of many large scale
models in the theoretical investigations and practical weather predictions (see, e.g., \cite{Lions1992,JLLions1992,Washington2005}). In fact, such an approximation is reliable and useful in the following sense: (i)
the vertical scale of the atmosphere is significantly smaller than the planetary horizontal scale; (ii) the balance of gravity and pressure dominates the dynamic in the vertical direction; (iii) the vertical velocity is
usually hard to observe in reality. Rigorous justifications of the hydrostatic approximation of hydrodynamic equations can be found in \cite{Li2017,Azerad2001} (see also \cite{FugigaHHK,FuGiHiHuKaW1,FuruGigaKas,LiTitiYuan}) for incompressible flows and \cite{Liu2019} for
isentropic compressible flows.

In this work, our goal is to investigate the fundamental problem of the well-posedness of solutions to the compressible primitive equations with non-trivial entropy. In particular, we consider the non-isentropic compressible primitive equations with only vertical diffusion for the temperature:
\begin{equation}\label{eq:CPE}
	\begin{cases}
		\dt \rho + \dvh (\rho v)  + \dz (\rho w) = 0, \\
		\dt (\rho v) + \dvh(\rho v\otimes v) + \dz(\rho w v) + \nablah p = \dvh \mathbb S_h + \mu \partial_{zz} v, \\
		\dz p = 0, \\
		\dfrac{1}{\gamma-1}\bigl(\dt p + \dvh(p v) + \dz (pw) \bigr) + p(\dvh v + \dz w) = \dz (\kappa \dz \theta )+Q(\nabla v),
	\end{cases}
\end{equation}
with
\begin{eqnarray}
	&\mathbb S_h = \mu (\nablah v + \nablah^\top v) + \lambda \dvh v \mathbb I_2, \quad
	p = R \rho \theta, \nonumber\\
  &Q(\nabla v)=\mathbb S_h:\nabla_hv+\mu|\partial_zv|^2,\label{Q}
\end{eqnarray}
with constant coefficients $\gamma$, $\mu$, $\lambda$, $\kappa,$ and $R$ satisfying $\gamma > 1$, $\mu+\lambda>0$, $\mu>0$, $\kappa>0$, and $R>0$.
Here, $ \rho, v, w $, and $ p $ represent the density, the horizontal velocity, vertical velocity, and pressure, respectively. $ \dvh, \nablah $, and $\deltah $ are, and will be, the divergence, gradient, and Laplace operators in the horizontal variables $ (x,y) $, respectively.
We investigate system \eqref{eq:CPE} in the horizontally periodic channel
$$ \mathcal O:= \mathbb T^2 \times (0,1) = \lbrace (x,y,z) | (x,y) \in \mathbb T^2, z \in (0,1) \rbrace,
$$
and consider the following
boundary conditions:
\begin{equation}\label{bc:CPE}
	\dz \theta\big|_{z=0,1} = 0, \quad \dz v\big|_{z=0,1} = 0, \quad w\big|_{z=0,1} = 0.
\end{equation}

The isentropic compressible primitive equations have been investigated by the last two authors in \cite{LT2018a} for local strong solutions and \cite{LT2018b} for global weak solutions. The existence of global weak solutions is also studied by \cite{Wang2020}, independently. See also, \cite{Jiu2018,Ersoy2011a,Ersoy2012,Gatapov2005}.

On the other hand, the incompressible primitive equations have been the subject of intensive mathematical research since the introduction
by Lions, Temam, and Wang in \cite{Lions1992,JLLions1992}. For instance, Guill\'en-Gonz\'alez, Masmoudi, and Rodr\'iguez-Bellido in
\cite{GuillenGonzalez2001} study the local existence of strong solutions and global existence of strong solutions with small initial data. In
\cite{HuTemamZiane2003}, the authors address the global existence of strong solutions in a domain with small depth. The first breakthrough
concerning the global well-posedness of incompressible primitive equations is obtained by Cao and Titi in \cite{Cao2007}. See also,
\cite{Cao2014a,Cao2014b,Cao2016,Cao2016a,CaoLiTiti2020,CaoLiTitiWang2024,Boling,MHATK,MHTKa,GMK,IKMZ,Li2017a,LiYuan2022} and the references therein for
related literatures; in particular, global well-posedness of strong solutions was proved in \cite{Cao2016,Cao2016a,CaoLiTiti2020,CaoLiTitiWang2024} to the primitive equations with only horizontal viscosity.

We have introduced the PE diagram in \cite{LT2018LowMach1}, which concerns the low Mach number limit and the small aspect ratio (between the vertical and horizontal scales) limit. In \cite{LT2018LowMach1,LT2018LowMach2}, we also establish the low Mach number limit for the isentropic compressible primitive equations. However, the counterpart study of the compressible Navier-Stokes-Fourier equations (see, e.g., \cite{Alazard2006}) indicates that the PE diagram might be completely different for non-isentropic flows, due to the additional heat conductivity. We leave such a subject to future study.

For interested readers, we refer to \cite{Temam1977,Lions1996,Lions1998,Feireisl2004,Feireisl2009a} for the study of hydrodynamic equations.

One can easily see that, compared to the Navier-Stokes equations, the evolutionary equation for vertical velocity is missing in system
\eqref{eq:CPE}. In fact, this is one of the main challenges in the study of the compressible primitive equations. In order to have a better
understanding of the system, we derive a representation of vertical velocity in terms of horizontal velocity, density, and pressure.
Consequently, we will reformulate system \eqref{eq:CPE} to an equivalent one.

Use \subeqref{eq:CPE}{1} to rewrite \subeqref{eq:CPE}{2} as
$$
\rho(\partial_tv+(v\cdot\nabla_h)v+w\partial_zv)+\nabla_hp=\mu\Delta v+(\mu+\lambda)\nabla_h\text{div}_hv
$$
and define
\begin{equation}\label{def:rho-1}
	\sigma := \frac1\rho.
\end{equation}
Then, it is clear that
\begin{equation}
\partial_tv+(v\cdot\nabla_h)v+w\partial_zv+\sigma \nabla_hp=\mu\sigma\Delta v+(\mu+\lambda)\sigma \nabla_h\text{div}_hv. \label{EQv00}
\end{equation}

Note that \subeqref{eq:CPE}{3} implies that $p$ is independent of the vertical variable $z$ and one can use \subeqref{eq:CPE}{3} to rewrite \subeqref{eq:CPE}{4} as
\begin{equation*}
 \partial_tp+v\cdot\nabla_hp+\gamma p(\text{div}_hv+\partial_zw)=(\gamma-1)\kappa\partial_z^2\theta+(\gamma-1)Q(\nabla v).
\end{equation*}
Separating the $z$-average part and the fluctuation part of the above equation, and recalling the boundary conditions $\partial_z\theta|_{z=0,1}=w|_{z=0,1}=0$, one obtains that
\begin{eqnarray}
 \partial_tp+\bar v\cdot\nabla_hp+\gamma p\text{div}_h\bar v=(\gamma-1)\overline{Q(\nabla v)},\label{EQp00}\\
\tilde v\cdot\nabla_hp+\gamma p(\text{div}_h\tilde v+\partial_zw)=(\gamma-1)\kappa\partial_z^2\theta+(\gamma-1)\widetilde{Q(\nabla v)}. \label{EQw000}
\end{eqnarray}
Here, for a function $f$, we use
\begin{equation}\label{TBf}
\bar f:=\int_0^1f dz\quad\mbox{and}\quad  \tilde f:=f-\bar f,
\end{equation}
to represent the vertical integral (equivalently, average) and the vertical fluctuation of the quantity considered, respectively.

Since $\theta = R^{-1} \rho^{-1} p=R^{-1}\sigma p,$
it follows from \eqref{EQw000} and \subeqref{eq:CPE}{3} that
\begin{eqnarray}\label{eq:dz-w}
		 \dz w &=&-\text{div}_h\tilde v+\frac{\gamma-1}{\gamma p}\left(\kappa\partial_z^2\theta+\widetilde{Q(\nabla v)}\right)-\frac{1}{\gamma p}\tilde v\cdot\nabla_hp\nonumber\\
&=&\dfrac{(\gamma-1)\kappa}{\gamma R}\partial_{zz} \sigma -\text{div}_h\tilde v-\frac{1}{\gamma p}\left(\tilde v\cdot\nabla_hp-(\gamma-1)\widetilde{Q(\nabla v)}\right)\nonumber\\
&=&\nu\partial_z^2\sigma-\phi(v,p),\label{EQw00}
\end{eqnarray}
where $\nu=\dfrac{(\gamma-1)\kappa}{\gamma R}$ and
\begin{eqnarray}
  \phi(v,p):=\text{div}_h\tilde v+\frac{1}{\gamma p}\left(\tilde v\cdot\nabla_hp-(\gamma-1)\widetilde{Q(\nabla v)}\right).\label{phi}
\end{eqnarray}
Recalling that $p$ is independent of $z$, it holds that $\dz\sigma=R\dz(p^{-1}\theta)= R p^{-1}\dz\theta$. Then, the boundary condition \eqref{bc:CPE} implies
\begin{equation}\label{bc:rf-CPE}
\dz \sigma\big|_{z=0,1} = 0.
\end{equation}
Thanks to this and recalling that $w|_{z=0}$, it follows from \eqref{EQw00} that
\begin{equation}
  w=\nu\partial_z\sigma-\int_0^z\phi(v,p)dz'.\label{EQw00'}
\end{equation}

It follows from \subeqref{eq:CPE}{1} that
\begin{equation*}\label{eq:rho-1}
	\dt \rho^{-1} + v \cdot \nablah \rho^{-1} - \rho^{-1} \dvh v = \rho^{-2} \dz (\rho w) = \rho^{-1} \dz w - w \dz \rho^{-1},
\end{equation*}
from which, using \eqref{eq:dz-w} and recalling $\sigma=\frac1\rho$, one gets
\begin{equation}\label{EQs00}
	\partial_t\sigma+v\cdot\nabla_h\sigma+w\partial_z\sigma+\sigma(\phi(v,p)-\text{div}_hv)=\nu\sigma\partial_z^2\sigma.
\end{equation}

Now, collecting \eqref{EQv00}, \eqref{EQp00}, \eqref{EQw00'}, \eqref{EQs00}, we have the following reformulated system which is equivalent to \eqref{eq:CPE}
\begin{eqnarray}
  &\partial_tv+(v\cdot\nabla_h)v+w\partial_zv+\sigma\nabla_hp=\mu\sigma\Delta v+(\mu+\lambda)\sigma\nabla_h\text{div}_hv,\label{EQv0}\\
  &w=\nu\partial_z\sigma-\int_0^z\phi(v,p)dz',\label{EQw0}\\
  &\partial_t\sigma+v\cdot\nabla_h\sigma+w\partial_z\sigma+\sigma(\phi(v,p)-\text{div}_hv)=\nu\sigma\partial_z^2\sigma,
  \label{EQs0}\\
  &\partial_tp+\bar v\cdot\nabla_hp+\gamma p\text{div}_h\bar v=(\gamma-1)\overline{Q(\nabla v)}, \label{EQp0}
\end{eqnarray}
where $\phi(v,p)$ and $Q(\nabla v)$ are expressed as in (\ref{phi}) and (\ref{Q}), respectively. The boundary conditions read as
\begin{eqnarray}
  &v, \sigma, \mbox{ and } p \mbox{ are periodic in }x, y, \label{BC1}\\
  &\partial_zv|_{z=0,1}=0, \quad\partial_z\sigma|_{z=0,1}=0. \label{BC2}
\end{eqnarray}
The initial condition is
\begin{equation}
  \label{IC}
  (v, \sigma, p)|_{t=0}=(v_{0}, \sigma_{0}, p_{0}).
\end{equation}

Before stating the main result, we recall some standard notations. For positive integer $k$ and $q\in[1,\infty]$, $L^q(\mathcal O)$ and
$L^q(\mathbb T^2)$ are the Lebesgue spaces,
$W^{k,q}(\mathcal O)$ and $W^{k,q}(\mathbb T^2)$ are the Sobolev spaces. If $q=2$, we use $H^k$ instead of $W^{k,2}$. For simplicity,
we will use $\|\cdot\|_q$ to denote $L^q(\mathcal O)$ or $L^q(\mathbb T^2)$ norms of the corresponding function,
if the meaning is clear from the context.

We are now in the position to state the main result of this paper.

\begin{theorem}\label{thmmain}
Given $v_{0}\in H^3(\mathcal O)$, $\sigma_{0} \in H^2(\mathcal O)$, and $p_{0}\in H^3(\mathbb T^2),$ such that
$$
\sigma_{0}\geq\underline\sigma, \quad p_{0}\geq\underline p,\quad \partial_zv_{0}|_{z=0,1}=0,\quad \partial_z\sigma_{0}|_{z=0,1}=0,
$$
for two positive numbers $\underline\sigma$ and $\underline p$.
Then, there is a positive time $\mathcal T_0$,
depending only on $\gamma$, $\nu$, $\mu$, $\lambda$, $\underline\sigma$, $\underline p$, and
  $\|v_{0}\|_{H^3}^2+\|\sigma_{0}\|_{H^2}^2+\|p_0\|_{H^3}^2$, such that system (\ref{EQv0})--(\ref{EQp0}),
  subject to (\ref{BC1})--(\ref{IC}), has a unique local solution $(v,\sigma,p)$ on $\mathcal O\times(0,\mathcal T_0)$,
  satisfying
  \begin{eqnarray*}
  &&\inf_{(x,y,z,t)\in\mathcal O\times[0,\mathcal T_0]}\sigma\geq0.5\underline\sigma,\quad \inf_{(x,y,t)\in\mathbb T^2\times[0,\mathcal T_0]}p\geq0.5\underline p,\\
    &&v\in C([0,\mathcal T_0]; H^2(\mathcal O))\cap L^\infty(0,\mathcal T_0; H^3(\mathcal O))\cap L^2(0,\mathcal T_0; H^4(\mathcal O)),\\
    &&\sigma\in C([0,\mathcal T_0]; H^1(\mathcal O))\cap L^\infty(0,\mathcal T_0; H^2(\mathcal O)),\quad \partial_z\sigma\in L^2(0,\mathcal T_0; H^2(\mathcal O)),\\
    &&\partial_tv\in L^2(0,\mathcal T_0; H^2(\mathcal O)),\quad \partial_t\sigma\in L^2(0,\mathcal T_0; H^1(\mathcal O)),\\
    &&p\in C([0,\mathcal T_0]; H^2(\mathbb T^2))\cap L^\infty(0,\mathcal T_0; H^3(\mathbb T^2)),\quad\partial_tp\in  L^2(0,\mathcal T_0; H^2(\mathbb T^2)).
  \end{eqnarray*}
\end{theorem}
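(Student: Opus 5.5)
We prove Theorem \ref{thmmain} by a linearization and fixed-point (contraction) argument in a suitable energy class, combined with a priori estimates that exploit the parabolic structure of \eqref{EQv0} and the vertical parabolicity of \eqref{EQs0}.

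\emph{Function space and iteration map.} For $T>0$ and constants $M_1,M_2$ (to be fixed in terms of the initial data), let $X_T$ be the set of triples $(v,\sigma,p)$ with the regularity listed in the theorem. We also require
\[
\sigma\ge \tfrac12\underline\sigma,\qquad p\ge\tfrac12\underline p,
\]
the top-order norms $\sup_t(\|v\|_{H^3}^2+\|\sigma\|_{H^2}^2+\|p\|_{H^3}^2)\le M_1$, and the dissipative norms $\int_0^T(\|v\|_{H^4}^2+\|\partial_z\sigma\|_{H^2}^2+\|\partial_tv\|_{H^2}^2+\|\partial_t\sigma\|_{H^1}^2+\|\partial_tp\|_{H^2}^2)\,dt\le M_2$.

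Given $(\hat v,\hat\sigma,\hat p)\in X_T$, define $\hat w$ by \eqref{EQw0} with $(\hat v,\hat\sigma,\hat p)$ and solve three decoupled linear problems:
\begin{itemize}
\item[(i)] a linear parabolic system for $v$: $\partial_t v+(\hat v\cdot\nabla_h)v+\hat w\partial_z v-\mu\hat\sigma\Delta v-(\mu+\lambda)\hat\sigma\nabla_h\mathrm{div}_h v=-\hat\sigma\nabla_h\hat p$, with Neumann condition in $z$;
\item[(ii)] a linear equation for $\sigma$, parabolic only in $z$: $\partial_t\sigma+\hat v\cdot\nabla_h\sigma+\hat w\partial_z\sigma-\nu\hat\sigma\partial_z^2\sigma=-\sigma(\phi(\hat v,\hat p)-\mathrm{div}_h\hat v)$;
\item[(iii)] a linear transport equation on $\mathbb T^2$ for $p$: $\partial_tp+\bar{\hat v}\cdot\nabla_hp+\gamma p\,\mathrm{div}_h\bar{\hat v}=(\gamma-1)\overline{Q(\nabla\hat v)}$.
\end{itemize}
Solvability of (i) and (iii) is standard (Galerkin for (i), characteristics for (iii)). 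For (ii) we add a small horizontal viscosity $\epsilon\Delta_h\sigma$, obtain $\epsilon$-uniform estimates, and pass to the limit.

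\emph{Estimates and the main obstacle.} The main obstacle is that $\hat w$ contains $\nu\partial_z\hat\sigma$ and $\int_0^z\phi\,dz'$. Since $\phi$ involves $\nabla\hat v$, $\nabla_h\hat p$ and $Q(\nabla\hat v)$, we get $\hat w\in H^2$ only through $\partial_z\hat\sigma\in L^2_tH^2$ and $\hat v\in L^2_tH^4$. This is the derivative loss of hydrostatic systems.

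For $v$: test (i) with $\Delta^3 v$, i.e.\ estimate $\nabla^3 v$ and gain $L^2_tH^4$. The term $\hat w\partial_zv$ needs $\|\nabla^3\hat w\|_{L^2}$. That is controlled by $\|\partial_z\hat\sigma\|_{H^3}$, which we do not have. So we move one derivative, estimating $\int\nabla^2(\hat w\partial_z v)\cdot\nabla^4 v$ instead. This needs only $\hat w\in H^2$, hence $\partial_z\hat\sigma\in H^2$, which is integrable in time. The gradient term $\hat\sigma\nabla_h\hat p$ at level $H^2$ requires $\hat\sigma\in H^2$ and $\hat p\in H^3$, which are available.

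For $\sigma$ in $H^2$: the vertical dissipation $\nu\hat\sigma\partial_z^2$ yields $\partial_z\sigma\in L^2_tH^2$. The dangerous term is $\hat w\partial_z\sigma$, whose worst part $\nu\partial_z\hat\sigma\,\partial_z\sigma$ at top order is handled by the vertical dissipation. Horizontal derivatives of $\phi$ need $\hat v\in H^4$ and $\hat p\in H^3$, both available in $L^2_t$.

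For $p$: transport in $H^3$ needs $\overline{Q(\nabla\hat v)}\in H^3(\mathbb T^2)$, i.e.\ $\nabla\hat v\in H^3$, which holds in $L^2_t$. This is why $p$ and $v$ must be one derivative smoother than $\sigma$.

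The lower bounds $\sigma\ge\frac12\underline\sigma$ and $p\ge\frac12\underline p$ follow for small $T$ from $\|\partial_t\sigma\|_{L^2_tH^1}$ and $\|\partial_tp\|_{L^2_tH^2}$ together with embeddings. Here the $\sigma$ bound needs care because $H^1\not\subset L^\infty$, so we use the maximum principle for (ii) instead. Choosing $M_1$ of the order of the initial data and then $T$ small, the map sends $X_T$ into itself.

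\emph{Contraction and conclusion.} Differences of two iterates are estimated in the lower norm
\[
L^\infty_t(H^1\times L^2\times H^1)\cap L^2_t(H^2\times \partial_z^{-1}H^1),
\]
one derivative below the top, to absorb the derivative loss in $\hat w$. Taking $T$ small then gives a contraction in this weaker metric. Boundedness in the strong norms, weak-$*$ compactness, and lower semicontinuity show that the fixed point lies in $X_T$.

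Uniqueness and continuous dependence follow from the same difference estimates applied to two solutions. Time continuity in the stated spaces follows from Aubin–Lions type interpolation, since $v\in L^2_tH^4$ and $\partial_tv\in L^2_tH^2$ give $v\in C_tH^3$-weak and $C_tH^2$, and similarly for $\sigma$ and $p$.
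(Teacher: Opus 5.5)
Your individual estimates track the paper's closely: pairing $\nabla^2(\hat w\,\partial_z v)$ with $\nabla^4 v$, vertical dissipation for $\sigma$ at the $H^2$ level, $H^3$ transport for $p$, and differences in $H^1\times L^2\times H^1$. The gap is in how you close the argument: the derivative loss in $\hat w$ is \emph{not} absorbed by taking $T$ small. The bound
\[
\Bigl|\int\nabla^2(\hat w\,\partial_z v)\cdot\nabla^4 v\,dxdydz\Bigr|\le C\nu\|\partial_z\hat\sigma\|_{H^2}\|\partial_z v\|_{H^2}\|v\|_{H^4}+\cdots
\]
is critical, because $\|\partial_z\hat\sigma\|_{H^2}$ and $\|v\|_{H^4}$ are each only square integrable in time. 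After Young's inequality it produces the Gronwall factor $\exp\bigl(C\nu^2(\mu\underline\sigma)^{-1}\int_0^T\|\partial_z\hat\sigma\|_{H^2}^2dt\bigr)$. On your ball this is bounded only by $e^{CM_2}$, and it does not tend to $1$ as $T\to0$. With a single radius $M_2$ for all dissipative norms, invariance of the ball would require $\int_0^T\|v\|_{H^4}^2dt\lesssim e^{CM_2}\|v_0\|_{H^3}^2\le M_2$, which is impossible for large data. For the same reason $M_1$ cannot be "of the order of the initial data".

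What is missing is the hierarchy the paper builds in. First check that in the $H^2$ estimate for $\sigma$ every term involving $\hat\sigma,\hat w,\hat v,\hat p$ is subcritical. This follows from Gagliardo--Nirenberg, e.g. $\|\partial_z\hat\sigma\|_\infty+\|\nabla\partial_z\hat\sigma\|_3\le C\|\hat\sigma\|_{H^2}^{1/2}\|\partial_z\hat\sigma\|_{H^2}^{1/2}$, so that $L^2_t$ quantities appear with total power $3/2$. This gives $\int_0^T\|\partial_z\sigma\|_{H^2}^2dt\le C(\nu,\underline\sigma)\|\sigma_0\|_{H^2}^2+o_T(1)$, with a constant independent of all radii. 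You then fix this $\sigma$-dissipation radius first. Next take the radii for $v$, $p$ and the time derivatives exponentially large in it. Only after that choose $T$ small. This is exactly the role of the paper's constant $C_\sharp$, which depends only on $\underline\sigma,\nu,\mu$, and of the choice $M_0=2e^{C_\sharp(X_0+1)}$.

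The same coupling breaks your contraction claim. In the $H^1$ estimate for $\delta v=v_1-v_2$ (testing with $-\Delta\delta v$), the term $\nu\,\partial_z\delta\hat\sigma\,\partial_z v_2\cdot\Delta\delta v$ yields
\[
\|\delta v\|_{L^\infty H^1}^2+\|\delta v\|_{L^2H^2}^2\le C(M)\|\partial_z\delta\hat\sigma\|_{L^2L^2}^2+\cdots
\]
with no factor that is small in $T$. Integrating by parts in $z$ would put a derivative on $\Delta\delta v$, which you do not control. Yet $\delta v\in L^2H^2$ is exactly what the $H^1$ estimate of $\delta p$ requires, and through $\delta\phi$ so does the $L^2$ estimate of $\delta\sigma$. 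So small $T$ alone gives no contraction in the unweighted metric. One fix is a weighted metric with a small weight $\delta_*$ on the $v$-component, so that $\delta_*C(M)\|\partial_z\delta\hat\sigma\|_{L^2L^2}^2$ is dominated by the $\sigma$-dissipation part of the metric; this is what the paper's $\delta_*$ does in its uniqueness proof. The other is to show that the twice-iterated map contracts.

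With these two repairs your direct-linearization route would be a legitimate alternative to the paper's. It needs an $\epsilon$-regularization only for the linear $\sigma$-problem. The paper instead regularizes the whole system, making it semilinear parabolic and solvable by an $\epsilon$-dependent contraction. It then proves $\epsilon$-uniform bounds by a continuity argument and passes to the limit via Aubin--Lions.
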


Some comments on the proof of Theorem \ref{thmmain} are presented as follows.
Regularity assumption $\sigma_0\in H^2(\mathcal O)$ comes from the following observation: for the toy model
$$
\partial_t\sigma=\nu\sigma\partial_z^2\sigma,
$$
in order that $H^k(\mathcal O)$ regularity propagates, one needs $k\geq2$.
Fortunately, such $H^2(\mathcal O)$ regularity on $\sigma_0$ is still sufficient for the more complicated equation \eqref{EQs0}.
While for the aim of getting $H^2(\mathcal O)$ estimate on $\sigma$, one may apply $\Delta$ to \eqref{EQs0} and, as a result, the
quantity $\Delta\phi(v,p)$ and further, recalling the expression of $\phi(v,p)$, $\nabla_h\Delta_hp$ will be encountered.
In order to get information on $\nabla_h\Delta_hp$, by equation \eqref{EQp0}, one requires $p_0\in H^3(\mathbb T^2)$ and moreover
the quantity $\nabla_h^4v$ will be encountered. Due to this and by \eqref{EQv0}, one has to assume $v_0\in H^3(\mathcal O)$.
The existence of solutions to system \eqref{EQv0}--\eqref{EQp0}, subject to \eqref{BC1}--\eqref{IC}, is established by parabolic
regularization argument. A regularized system is introduced by adding the regularizing dissipative terms $\epsilon\Delta_h\sigma$ and $\epsilon\Delta_hp$
to \eqref{EQs0} and \eqref{EQp0}, respectively, see \eqref{EQv}--\eqref{EQp}, below. Such a regularized system is a semi-linear
parabolic system. The main effort of the proof is then to carry out
suitable $\epsilon$-independent {\it a priori} estimates and the local existence
follows passing to the limit $\epsilon\rightarrow0$.
Continuous dependence on the initial data, which in particular implies
the uniqueness, is proved by performing energy estimates to the difference between two solutions. We employ $L^2$ energy estimate to the
subtracted $\sigma$ equation which contains a term involving $\nabla_hp$. This, due to the absence of dissipation
in the pressure equation, forces us to perform $H^1(\mathbb T^2)$ type energy estimate to the subtracted pressure equation and
further $H^1(\mathcal O)$ energy to the subtracted velocity equations.

The rest of this paper is arranged as follows: in the next section, Section \ref{sec3}, we consider the $\epsilon$-regularized system
and perform suitable a priori estimates; the main result, Theorem \ref{thmmain}, is proved in Section \ref{sec4}; some commutator estimates are proved in Appendix A; local existence and uniqueness of the $\epsilon$-regularized system is proved in Appendix B.


\section{The $\epsilon$-regularized system}
\label{sec3}
\subsection{Some calculus inequalities}
As basic tools, some calculus inequalities are stated in this subsection. Let $\Omega$ be a bounded smooth domain in $\mathbb R^n$ with $n\geq2$. Given a positive integer $m$ and arbitrary multi-index $\alpha$ with $|\alpha|=m$. Let $1\leq q\leq\infty$. Recall the following standard calculus inequalities (see Appendix A for the proof):
\begin{equation}
  \|D^\alpha(fg)\|_q\leq C\left(\|f\|_{r_1}\|g\|_{W^{m, s_1}}+\|g\|_{r_2}\|f\|_{W^{m,s_2}}\right) \label{CAL}
\end{equation}
and
\begin{equation}
  \|[D^\alpha, f]g\|_q\leq C\left(\|\nabla f\|_{r_1}\|g\|_{W^{m-1, s_1}}+\|g\|_{r_2}\|f\|_{W^{m,s_2}}\right),\label{COME}
\end{equation}
for any functions $f$ and $g$ such that the quantities on the right-hand sides are finite, where
\begin{equation}
  [D^\alpha, f]g : = D^\alpha(fg) - f D^\alpha g,
\end{equation}
$C$ is a positive constant depending only on $m, q, r_1, r_2, s_1, s_2,n,\Omega$, and
\begin{equation}\label{CDT}
\frac1q=\frac{1}{r_1}+\frac{1}{s_1}=\frac{1}{r_2}+\frac{1}{s_2}, \quad 1\leq q, r_1, r_2, s_1, s_2\leq\infty.
\end{equation}
As direct consequences,
for any $q$, $r_1$, $r_2$, $s_1$, $s_2$ satisfying (\ref{CDT}), it holds that
$$
\|fg\|_{W^{m,q}}\leq C\left(\|f\|_{r_1}\|g\|_{W^{m, s_1}}+\|g\|_{r_2}\|f\|_{W^{m,s_2}}\right),
$$
where $C$ is a positive constant depending only on $m, q, r_1, r_2, s_1, s_2,n,\Omega$. In particular, one gets
\begin{equation}\label{ALGmq}
  \|fg\|_{W^{m,q}}\leq C\left(\|f\|_{\infty}\|g\|_{W^{m,q}}+\|g\|_{\infty}\|f\|_{W^{m,q}}\right)
\end{equation}
and thus
\begin{equation*}
   \|fg\|_{W^{m,q}\cap L^\infty}\leq C \|f\|_{W^{m,q}\cap L^\infty}\|g\|_{W^{m,q}\cap L^\infty},
\end{equation*}
for a positive constant $C$ depending only on $m, q, n,$ and $\Omega$. Thanks to this and by the Sobolev embedding $W^{m,q}\hookrightarrow
L^\infty$ for $mq>n$, one has
\begin{equation*}
  \|fg\|_{W^{m,q}}\leq C \|f\|_{W^{m,q}}\|g\|_{W^{m,q}},\quad\mbox{if }mq>n,
\end{equation*}
for a positive constant $C$ depending only on $m, q, n,$ and $\Omega$. Applying this to the case that $n=2,3$, one gets for any integer $k\geq2$ hat
\begin{equation}
\label{ALGHk}
  \|fg\|_{H^k(V)}\leq C\|f\|_{H^k(V)}\|g\|_{H^k(V)},
\end{equation}
for a positive constant $C$ depending only on $k$ and $V$, where $V\subseteq\mathbb R^n (n=2,3)$ is a bounded smooth domain.

\subsection{The regularized system and local well-posedness}
Given $\epsilon>0$, consider the following regularized system
\begin{eqnarray}
  &\partial_tv+(v\cdot\nabla_h)v+w\partial_zv+\sigma\nabla_hp=\mu\sigma\Delta v+(\mu+\lambda)\sigma\nabla_h\text{div}_hv,\label{EQv}\\
  &w=\nu\partial_zv-\int_0^z\phi(v,p)dz',\label{EQw}\\
  &\partial_t\sigma+v\cdot\nabla_h\sigma+w\partial_z\sigma+\sigma(\phi(v,p)-\text{div}_hv)=\nu\sigma\partial_z^2\sigma+\epsilon\Delta_h\sigma,
  \label{EQs}\\
  &\partial_tp+\bar v\cdot\nabla_hp+\gamma p\text{div}_h\bar v=\epsilon\Delta_hp+(\gamma-1)\overline{Q(\nabla v)}, \label{EQp}
\end{eqnarray}
where $\phi(v,p)$ and $Q(\nabla v)$ are expressed as in (\ref{phi}) and (\ref{Q}), respectively.

Assume that the initial data $(v_{0}, \sigma_{0}, p_{0})$ satisfies
\begin{eqnarray}
    (v_{0}, \sigma_{0})\in H^3(\mathcal O),\quad p_{0}\in H^3(\mathbb T^2),\quad
    \sigma_{0}\geq\underline\sigma, \quad p_{0}\geq\underline p,\label{ASSUMIC1}\\
    \partial_zv_{0}|_{z=0,1}=0,\quad \partial_z\sigma_{0}|_{z=0,1}=0.\label{ASSUMIC2}
\end{eqnarray}

For any initial data $(v_{0}, \sigma_{0}, p_{0})$ satisfying (\ref{ASSUMIC1})--(\ref{ASSUMIC2}), system (\ref{EQv})--(\ref{EQp}),
subject to (\ref{BC1})--(\ref{IC}), has a unique local solution, that is we have the following proposition.

\begin{proposition}
  \label{PROPLOC-e}
  Given $(v_{0}, \sigma_{0}, p_{0})$ satisfying (\ref{ASSUMIC1})--(\ref{ASSUMIC2}).
  Then, there is a positive time $T_\epsilon$,
  depending only on $\epsilon$, $\underline\sigma$, $\underline p$, and
  $\|(v_{0},\sigma_{0})\|_{H^3(\mathcal O)}+\|p_{0}\|_{H^3(\mathbb T^2)}$, such that system (\ref{EQv})--(\ref{EQp}),
  subject to (\ref{BC1})--(\ref{IC}), has a unique local in time solution $(v,\sigma,p)$ on $\mathcal O\times(0,T_\epsilon)$,
  satisfying
  \begin{eqnarray*}
  &&\inf_{(x,y,z,t)\in\mathcal O\times(0,T_\epsilon)}\sigma>0,\quad \inf_{(x,y,t)\in\mathbb T^2\times(0,T_\epsilon)}p>0,\\
    &&(v,\sigma)\in C([0,T_\epsilon]; H^3(\mathcal O))\cap L^2(0,T_\epsilon; H^4(\mathcal O)),\quad(\partial_tv,\partial_t\sigma)\in L^2(0,T_\epsilon; H^2(\mathcal O)),\\
    &&p\in C([0,T_\epsilon]; H^3(\mathbb T^2))\cap L^2(0,T_\epsilon; H^4(\mathbb T^2)),\quad\partial_tp\in  L^2(0,T_\epsilon; H^2(\mathbb T^2)).
  \end{eqnarray*}
\end{proposition}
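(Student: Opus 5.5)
Proposition \ref{PROPLOC-e} is a local well-posedness statement for a semilinear parabolic system, so I would prove it by a contraction mapping argument in a high-regularity space with $\epsilon>0$ fixed, aiming for a time $T_\epsilon$ that depends on $\epsilon$. Once $\epsilon\Delta_h$ is added, the equations (\ref{EQs}) for $\sigma$ and (\ref{EQp}) for $p$ are parabolic. In (\ref{EQs}) the principal part is $\nu\sigma\partial_z^2\sigma+\epsilon\Delta_h\sigma$, which is uniformly parabolic as long as $\sigma\ge\underline\sigma/2$. In (\ref{EQp}) the principal part is $\epsilon\Delta_h p$ on $\mathbb T^2$. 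In (\ref{EQv}) it is $\mu\sigma\Delta v+(\mu+\lambda)\sigma\nabla_h\mathrm{div}_h v$, which is a Lamé-type operator and strongly elliptic because $\mu>0$ and $\mu+\lambda>0$.

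I would work in the space $X_T$ of triples $(v,\sigma,p)$ with $(v,\sigma)\in C([0,T];H^3)\cap L^2(0,T;H^4)$, $p\in C([0,T];H^3(\mathbb T^2))\cap L^2(0,T;H^4(\mathbb T^2))$, time derivatives in $L^2H^2$, the lower bounds $\sigma\ge\underline\sigma/2$ and $p\ge\underline p/2$, and norms bounded by $M$. Given $(\hat v,\hat\sigma,\hat p)\in X_T$, I would define $(v,\sigma,p)$ by solving linear problems:
\begin{itemize}
\item for $v$, the system $\partial_t v-\mu\hat\sigma\Delta v-(\mu+\lambda)\hat\sigma\nabla_h\mathrm{div}_h v=F(\hat v,\hat\sigma,\hat p)$ with the Neumann condition in $z$;
\item for $\sigma$, the equation $\partial_t\sigma-\nu\hat\sigma\partial_z^2\sigma-\epsilon\Delta_h\sigma=G$;
\item for $p$, the equation $\partial_tp-\epsilon\Delta_h p=H$ on $\mathbb T^2$.
\end{itemize}
All lower-order terms are frozen at the hatted functions and placed in $F,G,H$, and $w$ is computed from (\ref{EQw}) using $\hat v,\hat\sigma,\hat p$. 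The linear problems with $H^3$-in-space coefficients are solved by Galerkin approximation using the cosine basis in $z$, which respects the boundary conditions (\ref{BC2}), together with standard $H^3$ energy estimates. These estimates follow from testing the equation differentiated up to third order against the derivatives. Commutators such as $[D^\alpha,\hat\sigma]\Delta v$ are controlled by (\ref{COME}) and (\ref{ALGHk}). Parabolicity gives $\|\nabla^4\cdot\|_{L^2L^2}$, which absorbs the worst terms.

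The key point in the estimates is the loss of derivatives hidden in $w$ and $\phi$. The term $w$ contains $\partial_z\hat\sigma$ and $\int_0^z\phi\,dz'$, and $\phi$ contains $\mathrm{div}_h\tilde v$, $\nabla_hp$ and the quadratic term $Q(\nabla v)$. So $G$ contains $\nabla^2\hat v$, $\nabla_h\hat p$ and $\partial_z\hat\sigma\,\partial_z\sigma$-type products, and $G$ therefore lies only in $L^2H^2$. With $\epsilon$ fixed, maximal regularity turns $G\in L^2H^2$ into $\sigma\in C H^3\cap L^2H^4$, so nothing is lost. Similarly $H$ contains $\overline{Q(\nabla \hat v)}$ and $\mathrm{div}_h\bar{\hat v}$, which lie in $L^2H^2(\mathbb T^2)$ because $\hat v\in L^2H^4$. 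This gives $p\in CH^3\cap L^2H^4$.

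The step I expect to be the main obstacle is controlling the term $\nu\partial_z\hat\sigma\,\partial_z\sigma$ coming from $w\partial_z\sigma$ in (\ref{EQs}) at top order. It involves third derivatives of $\hat\sigma$ multiplied by $\partial_z\sigma$. I would bound it by $\|\hat\sigma\|_{H^4}$ times lower norms and use $\hat\sigma\in L^2H^4$, which yields factors $T^{1/2}$ or $T^{1/4}$ after Hölder in time. Choosing $M$ large and then $T$ small, depending on $\epsilon$, shows that the map sends $X_T$ to itself. The positivity of $\sigma$ and $p$ is preserved for small $T$ because $\|\sigma-\sigma_0\|_\infty\le C\|\sigma-\sigma_0\|_{H^2}\le CT^{1/2}\|\partial_t\sigma\|_{L^2H^2}$, and the same argument applies to $p$. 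Contraction would be shown in the lower norm $C H^2\cap L^2H^3$, and $X_T$ is closed under that topology by weak-$*$ compactness. This gives existence of a fixed point. The same difference estimates give uniqueness.
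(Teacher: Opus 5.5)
This is essentially the paper's own proof (Appendix B, Proposition \ref{PROPA3-1}). Both arguments freeze $\sigma$ in the principal parts, solve the linear parabolic problems with $H^3$/$H^4$ energy estimates, bound the nonlinearities $N_j$ so that their $L^2(0,T;H^2)$ norms carry a factor $T^{1/2}$, keep positivity via $\|\sigma-\sigma_0\|_\infty\le CT^{1/2}\|\partial_t\sigma\|_{L^2H^2}$, and close by contraction for small $T$; the paper uses even reflection in $z$ to $\mathbb T^3$ instead of a cosine Galerkin basis.

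The remaining differences are minor. The paper contracts directly in the full $\mathscr Y_T\times\mathscr Y_T\times\mathscr X_T$ norm, so your lower-norm contraction is a valid variant. Also, $w\partial_z\sigma$ is not an obstacle: since $\|w\|_{H^2}\le C(\|\partial_z\sigma\|_{H^2}+\|\phi(v,p)\|_{H^2})$ and $H^2$ is an algebra, all $N_j$ are bounded in $L^\infty(0,T;H^2)$ by $H^3$ norms alone (Proposition \ref{PROPA1-1}), and no $L^2H^4$ norm of $\hat\sigma$ is needed. You should, however, build the Neumann conditions into $X_T$, so that $w|_{z=0,1}=0$ and $\partial_z\phi|_{z=0,1}=0$ give $\partial_zN_1=\partial_zN_2=0$ at $z=0,1$, as the Neumann $H^4$ theory requires.
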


\begin{proof}
This is a direct corollary of Proposition \ref{PROPA3-1} in Appendix \ref{APPENDIXB}.
\end{proof}

\subsection{$\epsilon$-independent conditional energy estimates}

Denote $\Phi_1, \Phi_2,$ and $\Phi_2$ as
\begin{eqnarray}
  &&\Phi_1=\Phi_1(v,\sigma,p):=-(v\cdot\nabla_h)v-w\partial_zv-\sigma\nabla_hp,\label{Phi1}\\
  &&\Phi_2=\Phi_2(v,\sigma,p):=-w\partial_z\sigma+\sigma(\text{div}_hv-\phi(v,p)),\label{Phi2}\\
  &&\Phi_3=\Phi_3(v,p):=(\gamma-1)\overline{Q(\nabla v)}-\gamma p\text{div}_h\bar v, \label{Phi3}
\end{eqnarray}
with $w$, $\phi(v,p)$, and $Q(\nabla v)$ expressed as (\ref{EQw0}), (\ref{phi}), and \eqref{Q}, respectively. With these notations, system (\ref{EQv})--(\ref{EQp}) can be
rewritten equivalently as
\begin{eqnarray}
  \partial_tv-\mu\sigma\Delta v-(\mu+\lambda)\sigma\nabla_h\text{div}_hv&=&\Phi_1,\label{EQv'}\\
  \partial_t\sigma+v\cdot\nabla_h\sigma-\nu\sigma\partial_z^2\sigma-\epsilon\Delta_h\sigma&=&\Phi_2,
  \label{EQs'}\\
  \partial_tp+\bar v\cdot\nabla_hp-\epsilon\Delta_hp&=&\Phi_3. \label{EQp'}
\end{eqnarray}

Some estimates for $\Phi_i, i=1,2,3$, are given in the following proposition.

\begin{proposition}
\label{PROP-ESTPhis}
Given a positive time $\mathcal T$ and let $(v,\sigma, p)$ be a solution to system (\ref{EQv})--(\ref{EQp}), subject to (\ref{BC1})--(\ref{IC}), on $\mathcal O\times(0,\mathcal T)$, enjoying the regularities
stated in Proposition \ref{PROPLOC-e}. Assume that
\begin{eqnarray}
  \sup_{0\leq t\leq\mathcal T}\left(\|v\|_{H^3(\mathcal O)}^2+\|\sigma\|_{H^2(\mathcal O)}^2+\|p\|_{H^3(\mathbb T^2)}^2\right)(t)\leq M,  \label{ASSUM-APRM}\\
  \inf_{(x,y,z,t)\in\mathcal O\times(0,\mathcal T)}\sigma\geq 0.5\underline\sigma,\quad \inf_{(x,y,t)\in\mathcal T^2\times(0,\mathcal T)}
  p\geq 0.5\underline p,\label{ASSUM-LBsp}
\end{eqnarray}
for some positive number $M$.
Let $\Phi_1, \Phi_2,$ and $\Phi_3$ be given by (\ref{Phi1}),
(\ref{Phi2}), and (\ref{Phi3}), respectively.
Then, the following estimates hold
\begin{align*}
  &\|(\Phi_1, \Phi_2, \Phi_3)\|_1\leq C, \quad \|\Phi_1\|_{H^2}\leq \nu C_*\|\partial_zv\|_{H^2}\|\partial_z\sigma\|_{H^2}+C,
  \\
  &\|\Phi_2\|_{H^2}\leq C\left(1+\|\partial_z\sigma\|_{H^2}^\frac32\right), \quad
  \|\Phi_3\|_{H^3}\leq C(1+\|v\|_{H^4}),
\end{align*}
for any $t\in(0,\mathcal T)$,
where $C_*$ is an absolute positive constant, and $C$ is a positive constant depending only on $\gamma$, $\nu$, $\mu$,
$\lambda$, $\underline p$, and $M$.
\end{proposition}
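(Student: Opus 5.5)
The plan is to estimate each $\Phi_i$ term by term with the calculus inequalities \eqref{CAL}, \eqref{ALGmq} and \eqref{ALGHk}, together with the Sobolev embeddings $H^2(\mathcal O)\hookrightarrow L^\infty$ and $H^1\hookrightarrow L^6$, and the two-dimensional embeddings on $\mathbb T^2$. Throughout, the hypotheses \eqref{ASSUM-APRM}--\eqref{ASSUM-LBsp} give $\|v\|_{H^3}+\|\sigma\|_{H^2}+\|p\|_{H^3(\mathbb T^2)}\le C$ and $p\ge 0.5\underline p$. Hence $1/p$ is bounded in $H^2(\mathbb T^2)\cap L^\infty$ via the chain rule, so $\frac{1}{\gamma p}$ acts as a bounded multiplier in $H^2$.

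\textbf{Step 1: $\phi$ and $w$.} From \eqref{phi}, $\phi(v,p)$ involves $\nabla_h\tilde v$, $\tilde v\nabla_h p/p$ and $\widetilde{Q(\nabla v)}/p$. By \eqref{ALGHk}, $\|\phi\|_{H^2}\le C$. Using $H^3\times H^3$ products, $\|\phi\|_{H^3}\le C(1+\|v\|_{H^4})$, since $\nabla_h^3$ acting on $\nabla_h p$ produces $\nabla_h^4 p$ only through $\tilde v\cdot\nabla_h p$; this would need $p\in H^4$. I will therefore only use $\|\phi\|_{H^2}\le C$, plus the $H^1$-in-$z$ control needed below. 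Then, by \eqref{EQw0}, $w=\nu\partial_z\sigma-\int_0^z\phi$. The integral part $W:=\int_0^z\phi\,dz'$ lies in $H^2$ with $\partial_z W=\phi$, so $\|W\|_{H^2}\le C$ and $\|W\|_{L^\infty}\le C$. The part $\nu\partial_z\sigma$ is bounded only in $H^1$ by $M$, and in $H^2$ by $\|\partial_z\sigma\|_{H^2}$.

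\textbf{Step 2: $L^1$ bounds and $\Phi_3$.} All terms in $\Phi_1,\Phi_2,\Phi_3$ are products of two $L^2$-bounded factors, e.g. $w\partial_z v$ with $w\in H^1$, or $w\partial_z\sigma$ with both factors in $L^2$. Hölder then gives $\|(\Phi_1,\Phi_2,\Phi_3)\|_1\le C$. For $\Phi_3=(\gamma-1)\overline{Q(\nabla v)}-\gamma p\,\mathrm{div}_h\bar v$, vertical averaging is bounded $H^3(\mathcal O)\to H^3(\mathbb T^2)$. Applying \eqref{CAL} with $\nabla v\in L^\infty$ and $\nabla v\in H^3$ gives $\|Q\|_{H^3}\le C\|\nabla v\|_\infty\|v\|_{H^4}\le C\|v\|_{H^4}$, and $\|p\,\mathrm{div}_h\bar v\|_{H^3}\le C(\|p\|_{H^3}\|\bar v\|_{W^{1,\infty}}+\|p\|_\infty\|v\|_{H^4})$.

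\textbf{Step 3: $\Phi_1$ in $H^2$.} The terms $(v\cdot\nabla_h)v$ and $\sigma\nabla_h p$ are bounded in $H^2$ by \eqref{ALGHk}, since $\sigma\in H^2$ and $\nabla_h p\in H^2(\mathbb T^2)\subset H^2(\mathcal O)$. For $W\partial_z v$, \eqref{ALGHk} gives the bound $C$. The critical term is $\nu\partial_z\sigma\,\partial_z v$. Expanding $D^\alpha$ with $|\alpha|\le2$, the top-order pieces $\partial_z v\,D^2\partial_z\sigma$ and $\partial_z\sigma\,D^2\partial_z v$ give $C_*\|\partial_z v\|_\infty\|\partial_z\sigma\|_{H^2}$ and $\|\partial_z\sigma\|_{L^\infty}\|\partial_z v\|_{H^2}$, both of the product form $C_*\|\partial_zv\|_{H^2}\|\partial_z\sigma\|_{H^2}$. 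The middle term $\nabla\partial_z\sigma\nabla\partial_z v$ is bounded by $\|\nabla\partial_z\sigma\|_{2}\|\nabla\partial_zv\|_\infty\le C$. This gives the stated form with absolute $C_*$. Low-order contributions are absorbed into $C$.

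\textbf{Step 4: $\Phi_2$ in $H^2$ — the main obstacle.} Here $\sigma(\mathrm{div}_h v-\phi)$ is bounded by \eqref{ALGHk}, and $W\partial_z\sigma$ gives $\le C(\|W\|_{H^2}\|\partial_z\sigma\|_{H^2})$. That is linear in $\|\partial_z\sigma\|_{H^2}$, which is fine since $1+x\le 2(1+x^{3/2})$. The hard term is $\nu\partial_z\sigma\,\partial_z\sigma=\frac\nu2\partial_z(\partial_z\sigma)^2$, which is quadratic in the weakly controlled quantity. By \eqref{ALGmq}, $\|(\partial_z\sigma)^2\|_{H^2}\le C\|\partial_z\sigma\|_\infty\|\partial_z\sigma\|_{H^2}$. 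I would then use Agmon/Gagliardo–Nirenberg in 3D, $\|f\|_\infty\le C\|f\|_{H^1}^{1/2}\|f\|_{H^2}^{1/2}$, with $\|\partial_z\sigma\|_{H^1}\le C$. This gives $C\|\partial_z\sigma\|_{H^2}^{3/2}$, and the middle term $|\nabla\partial_z\sigma|^2$ is handled by $\|\nabla\partial_z\sigma\|_4^2\le C\|\partial_z\sigma\|_{H^1}^{1/2}\|\partial_z\sigma\|_{H^2}^{3/2}$. The delicate point is making sure that no term produces a full power $\|\partial_z\sigma\|_{H^2}^2$, and that $\phi$ never requires $\nabla_h^4 p$.
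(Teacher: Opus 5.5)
Your plan follows the paper's proof: bound $1/p$ and $\phi$ in $H^2$, apply \eqref{ALGHk}/\eqref{ALGmq} term by term, and get the $\frac32$ power from $\|\partial_z\sigma\|_\infty\le C\|\partial_z\sigma\|_{H^1}^{1/2}\|\partial_z\sigma\|_{H^2}^{1/2}$. Splitting $w=\nu\partial_z\sigma-\int_0^z\phi\,dz'$, instead of using $\|w\|_{H^1}\le C$ and $\|w\|_{H^2}\le\nu\|\partial_z\sigma\|_{H^2}+C$ as the paper does, is only a cosmetic difference.

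However, one step in Step 3 fails as written. You bound the middle term by $\|\nabla\partial_z\sigma\|_2\|\nabla\partial_zv\|_\infty\le C$. But $v\in H^3$ only gives $\nabla\partial_zv\in H^1$, which does not embed into $L^\infty$ in three dimensions, so this quantity is not controlled by $M$. The fix is easy: use $\|\nabla\partial_z\sigma\|_4\|\nabla\partial_zv\|_4\le C\|\partial_z\sigma\|_{H^2}\|\partial_zv\|_{H^2}$. Alternatively, apply \eqref{ALGHk} to the whole product $\partial_z\sigma\,\partial_zv$ at once. Either way you get the required form $\nu C_*\|\partial_zv\|_{H^2}\|\partial_z\sigma\|_{H^2}$ with an absolute $C_*$, and that $M$-independence is what Proposition \ref{PROP-EST-UnderM} relies on.

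Two further slips are harmless:
\begin{itemize}
\item The identity $\nu(\partial_z\sigma)^2=\frac\nu2\partial_z(\partial_z\sigma)^2$ is false; the right-hand side equals $\nu\,\partial_z\sigma\,\partial_z^2\sigma$. You then correctly estimate $\|(\partial_z\sigma)^2\|_{H^2}$ directly, so nothing depends on it.
\item The aside $\|\phi\|_{H^3}\le C(1+\|v\|_{H^4})$ is wrong, since it would require $\nabla_h^4p$. You never use it.
\end{itemize}
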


\begin{proof}
Recalling the expression of $w$ in (\ref{EQw}), it follows from the H\"older, Sobolev, and Young inequalities and
(\ref{ASSUM-APRM})--(\ref{ASSUM-LBsp}) that
\begin{equation}
  \|\phi(v,p)\|_2\leq C(\|\nabla v\|_2+\|v\|_4\|\nabla_hp\|_4+\|\nabla v\|_4^2)\leq C(1+\|v\|_{H^1}^2+\|p\|_{H^1}^2)\leq C,
  \label{L2phi}
\end{equation}
for a positive constant $C$ depending only on $\gamma, \mu, \lambda, \underline p,$ and $M$. Thanks to this and recalling the expression of $w$
in (\ref{EQw}), it follows from (\ref{ASSUM-APRM}) that
\begin{equation}
  \label{L2w}
  \|w\|_2\leq\nu\|\partial_z\sigma\|_2+\left\|\int_0^z\phi(v,p)dz'\right\|_2\leq C(1+\|\sigma\|_{H^1}^2+\|\phi(v,p)\|_2)\leq C,
\end{equation}
for a positive constant $C$ depending only on $\gamma$, $\mu$, $\lambda$, $\nu$, $\underline p,$ and $M$.

Thanks to (\ref{L2phi}) and (\ref{L2w}),
it follows from the expressions of $\Phi_i, i=1, 2, 3,$ the Cauchy inequality, and (\ref{ASSUM-APRM}) that
\begin{align}
  \|(\Phi_1, \Phi_2, \Phi_3)\|_1\leq&\|v\|_2\|\nabla_hv\|_2+\|w\|_2\|\partial_zv\|_2+\|\sigma\|_2\|\nabla_hp\|_2\nonumber\\
  &+\|w\|_2\|\partial_z\sigma\|_2+\|\sigma\|_2(\|\text{div}_hv\|_2  +\|\phi(v,p)\|_2)\nonumber\\
  &+C(\|\nabla v\|_2^2+\|p\|_2\|\nabla_hv\|_2)\leq C,\label{L1Phis}
\end{align}
for a positive constant $C$ depending only on $\gamma$, $\mu$, $\lambda$, $\nu$, $\underline p,$ and $M$.

By direct calculations and using the Sobolev inequality, one has
\begin{eqnarray}
  \left\|\frac1p\right\|_{H^2}^2&\leq&C\int\left(\frac{1}{p^2}+\frac{|\nabla_hp|^2}{p^4}+\frac{|\nabla_h^2p|^2}{p^4}+\frac{|\nabla_hp|^4}{p^6}
  \right)dxdy\nonumber\\
  &\leq& C(1+\|\nabla_hp\|_4^4+\|\nabla_h^2p\|_2^2)\leq C(1+\|p\|_{H^2}^4)\leq C,\label{H21P}
\end{eqnarray}
for a positive constant depending only on $\underline p$ and $M$. Thanks to this,
recalling the expression of $\phi(v,p)$ in (\ref{phi}), and applying \eqref{ALGHk}, it follows from
(\ref{ASSUM-APRM}) that
\begin{equation}
  \|\phi(v,p)\|_{H^2}\leq C\left(\|\nabla v\|_{H^2}+\left\|\frac1p\right\|_{H^2}(\|v\|_{H^2}\|\nabla_hp\|_{H^2}+\|\nabla v\|_{H^2}^2)\right)
  \leq C,\label{H2phi}
\end{equation}
for a positive constant $C$ depending only on $\gamma$, $\mu$, $\lambda$, $\underline p$, and $M$. Then, recalling the expression of $w$ in (\ref{EQw}), it follows that
\begin{equation}
  \label{H1w}
  \|w\|_{H^1}\leq C(\|\partial_z\sigma\|_{H^1}+\|\phi(v,p)\|_{H^1}) \leq C
\end{equation}
and
\begin{equation}
  \label{H2w}
  \|w\|_{H^2}\leq\nu\|\partial_z\sigma\|_{H^2}+\left\|\int_0^z\phi(v,p)dz'\right\|_{H^2}\leq\nu\|\partial_z\sigma\|_{H^2}+C,
\end{equation}
for a positive constant $C$ depending only on $\gamma,$ $\nu$, $\mu$, $\lambda$, $\underline p$, and $M$.

Recalling the
expression of $\Phi_1$ in (\ref{Phi1}), applying (\ref{ALGHk}), and
recalling (\ref{ASSUM-APRM}) and (\ref{H2w}), one deduces
\begin{eqnarray}
  \|\Phi_1\|_{H^2}&\leq&\|(v\cdot\nabla_h)v\|_{H^2}+\|w\partial_zv\|_{H^2}+\|\sigma\nabla_hp\|_{H^2}\nonumber\\
  &\leq&C_*\|(v,\sigma)\|_{H^2}\|(\nabla_hv,\nabla_hp)\|_{H^2}+C_*\|w\|_{H^2}\|\partial_zv\|_{H^2}\nonumber\\
  &\leq&C+C_*\nu\|\partial_z\sigma\|_{H^2}\|\partial_zv\|_{H^2},\label{H2Phi1}
\end{eqnarray}
where $C_*$ is an absolute positive constant, and $C$ is a positive constant depending only on $\gamma$, $\mu$, $\lambda$, $\nu$,
$\underline p,$ and $M$.
Applying (\ref{ALGmq}) and (\ref{ALGHk}), it
follows from (\ref{ASSUM-APRM}), (\ref{H2phi}), (\ref{H1w}), (\ref{H2w}), and the Gagliardo-Nirenberg and Young inequalities that
\begin{eqnarray}
  \|\Phi_2\|_{H^2}&\leq&\|w\partial_z\sigma\|_{H^2}+\|\sigma\text{div}_hv\|_{H^2}+\|\sigma\phi(v,p)\|_{H^2}\nonumber\\
  &\leq&C[\|w\|_{H^2}\|\partial_z\sigma\|_\infty+\|w\|_\infty\|\partial_z\sigma\|_{H^2}+\|\sigma\|_{H^2}(\|\text{div}_hv\|_{H^2}+
  \|\phi(v,p)\|_{H^2})]\nonumber\\
  &\leq&C\left[(1+\|\partial_z\sigma\|_{H^2})\|\partial_z\sigma\|_{H^1}^\frac12\|\partial_z\sigma\|_{H^2}^\frac12+\|w\|_{H^1}^\frac12\|w\|_{H^2}^\frac12
  \|\partial_z\sigma\|_{H^2}+1\right]\nonumber\\
  &\leq&C\left[(1+\|\partial_z\sigma\|_{H^2})\|\partial_z\sigma\|_{H^2}^\frac12+(1+\|\partial_z\sigma\|_{H^2})^\frac12
  \|\partial_z\sigma\|_{H^2}+1\right]\nonumber\\
  &\leq& C\left(1+\|\partial_z\sigma\|_{H^2}^\frac32\right), \label{H2Phi2}
\end{eqnarray}
for a positive constant $C$ depending only on $\gamma$, $\mu$, $\lambda$, $\nu$, $\underline p,$ and $M$.
Applying (\ref{ALGmq}), it follows from the Sobolev inequality
and (\ref{ASSUM-APRM}) that
\begin{eqnarray}
  \|\Phi_3\|_{H^3}&\leq&C(\|\nabla v\|_{H^3}\|\nabla v\|_\infty+\|p\|_\infty\|\nabla v\|_{H^3}+\|p\|_{H^3}\|\nabla v\|_\infty)\nonumber\\
  &\leq&C(\|v\|_{H^3}\|v\|_{H^4}+\|p\|_{H^2}\|v\|_{H^4}+\|p\|_{H^3}\|v\|_{H^3})\nonumber\\
  &\leq& C(1+\|v\|_{H^4}), \label{H3Phi3}
\end{eqnarray}
for a positive constant $C$ depending only on $\gamma$, $\mu$, $\lambda,$ and $M$.
\end{proof}

\begin{proposition}
  \label{PROP-LBsp-UnderM}
Assume that all the assumptions stated in Proposition \ref{PROP-ESTPhis} hold. Then, it holds that
\begin{align*}
p\geq\underline pe^{-Ct},\quad \sigma\geq\underline\sigma e^{-Ct}, \quad\mbox{on }\mathcal O\times(0,\mathcal T),
\end{align*}
for a positive constant $C$ depending only on $\gamma, \mu, \lambda, \underline p,$ and $M$.
\end{proposition}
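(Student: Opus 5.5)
We will argue by the parabolic minimum principle, applied separately to the pressure equation \eqref{EQp} on $\mathbb T^2$ and to the specific-volume equation \eqref{EQs} on $\mathcal O$. In each case the zeroth-order coefficient is bounded in $L^\infty$ by a constant depending only on $\gamma,\mu,\lambda,\underline p$, and $M$.

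\textbf{The pressure.} Write \eqref{EQp} as
$$\partial_tp+\bar v\cdot\nabla_hp-\epsilon\Delta_hp+\gamma(\text{div}_h\bar v)\,p=(\gamma-1)\overline{Q(\nabla v)}\geq0 .$$
Nonnegativity holds because $\mu>0$ and $\mu+\lambda>0$. Indeed, in two dimensions
$$\mathbb S_h:\nabla_hv=\tfrac{\mu}{2}|\nabla_hv+\nabla_h^\top v|^2+\lambda(\text{div}_hv)^2\geq \mu\bigl|\tfrac12(\nabla_hv+\nabla_h^\top v)-\tfrac12\text{div}_hv\,\mathbb I_2\bigr|^2\cdot 2+(\mu+\lambda)(\text{div}_hv)^2\geq0 .$$
This is the standard decomposition into trace-free and trace parts. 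By Sobolev embedding and \eqref{ASSUM-APRM}, $\|\text{div}_h\bar v\|_\infty\leq C\|v\|_{H^3}\leq C$. Set $P=e^{Ct}p$ with $C\geq\gamma\|\text{div}_h\bar v\|_{L^\infty}$. Then
$$\partial_tP+\bar v\cdot\nabla_hP-\epsilon\Delta_hP+(\gamma\,\text{div}_h\bar v-C)P\geq0 .$$
Since $p>0$ by \eqref{ASSUM-LBsp}, the last term can be handled by the minimum principle on the periodic domain. Alternatively, compare with the constant subsolution $\underline p$: let $q=\underline p-P$, test with $q_+$, and use Gronwall to obtain $q_+\equiv0$. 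This gives $p\geq\underline p e^{-Ct}$.

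\textbf{The specific volume.} Write \eqref{EQs} as
$$\partial_t\sigma+v\cdot\nabla_h\sigma+w\partial_z\sigma-\nu\sigma\partial_z^2\sigma-\epsilon\Delta_h\sigma+(\phi(v,p)-\text{div}_hv)\sigma=0 .$$
The coefficient of the diffusion, $\nu\sigma$, is positive by \eqref{ASSUM-LBsp}. The equation is linear in $\sigma$ once $\sigma$ is frozen inside the diffusion and transport coefficients, so the minimum principle still applies. The Neumann condition $\partial_z\sigma|_{z=0,1}=0$ rules out a boundary minimum via Hopf-type or energy arguments; in the energy version, integrating by parts in $z$ creates no boundary term.

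The key step is the bound on the zeroth-order coefficient,
$$\|\phi(v,p)-\text{div}_hv\|_\infty\leq C .$$
From \eqref{phi}, this needs $\|\nabla v\|_\infty$, $\|v\|_\infty$, $\|\nabla_hp\|_\infty$, and $1/p\leq 2/\underline p$. All of these follow from $v\in H^3(\mathcal O)$, $p\in H^3(\mathbb T^2)$, \eqref{ASSUM-APRM}, and \eqref{ASSUM-LBsp}. In fact \eqref{H2phi} together with $H^2\hookrightarrow L^\infty$ already gives it.

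Setting $S=e^{Ct}\sigma$ gives a supersolution inequality. To make it rigorous, test the equation for $(\underline\sigma-S)_+$ against itself. The term $w\partial_z\sigma$ requires controlling $\partial_zw$ and $\nabla_hv$ in $L^\infty$; here we use $w\in H^2$ from \eqref{H2w} together with $\sigma\in H^3$ from Proposition \ref{PROPLOC-e}, or simply rely on the classical pointwise argument since solutions are regular for fixed $\epsilon$.

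\textbf{The main obstacle.} The most delicate point is the transport and diffusion terms with $\sigma$-dependent coefficients. Their $L^\infty$ bounds may depend on $\|\partial_z\sigma\|_{H^2}$, which is not controlled by $M$. The plan avoids this: the pointwise minimum principle uses only the sign structure at an interior minimum, where $\nabla\sigma=0$ and $\partial_z^2\sigma\geq0$, $\Delta_h\sigma\geq0$. Consequently, only the zeroth-order coefficient enters the constant, and that constant depends only on $\gamma,\mu,\lambda,\underline p$, and $M$.
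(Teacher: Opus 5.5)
Your proposal is correct and follows the same route as the paper, which likewise applies the comparison principle to \eqref{EQp} and \eqref{EQs}, with the zeroth-order coefficients bounded by $\gamma\|\text{div}_h\bar v\|_\infty\le C\|v\|_{H^3}$ and $\|\text{div}_hv\|_\infty+\|\phi(v,p)\|_\infty\le C(\|v\|_{H^3}+\|\phi(v,p)\|_{H^2})$ via \eqref{ASSUM-APRM} and \eqref{H2phi}. You also check explicitly that $\overline{Q(\nabla v)}\geq0$ (using $\mu>0$, $\mu+\lambda>0$) and observe that the transport and variable-diffusion coefficients need only be finite for fixed $\epsilon$, so they never enter the constant; the paper uses both facts without stating them.
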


\begin{proof}
Applying the comparision principle to \eqref{EQs} and \eqref{EQp}, and by the Sobolev inequality, it follows from \eqref{ASSUM-APRM} and \eqref{H2phi} that
$$
  p(x,y,z,t)\geq\underline pe^{-\gamma\int_0^t\|\text{div}_h\bar v\|_\infty ds}\geq\underline pe^{- C\int_0^t\|v\|_{H^3}ds}\geq\underline pe^{-Ct}
$$
and
$$
  \sigma(x,y,z,t)\geq\underline\sigma e^{-\int_0^t(\|\text{div}_hv\|_\infty+\|\phi\|_\infty)ds}\geq\underline\sigma e^{-C\int_0^t(\|v\|_{H^3}+\|\phi\|_{H^2})ds}\geq \underline\sigma e^{-Ct},
$$
proving the conclusion.
\end{proof}

\begin{proposition}
\label{PROP-EGYINEQT}
Assume that all the assumptions stated in Proposition \ref{PROP-ESTPhis} hold. Then, it holds that
\begin{align*}
  &\frac{d}{dt}\|(v,\nabla\Delta v)\|_2^2+\mu\underline\sigma\|\Delta^2v\|_2^2
  \leq \nu C_*\|\partial_z\sigma\|_{H^2}\|v\|_{H^4}\|\partial_zv\|_{H^2}+C\left(1+\|v\|_{H^4}^\frac32\right),\\
  &\frac{d}{dt}\|(\sigma,\Delta\sigma)\|_2^2+\nu\underline\sigma\|\Delta\partial_z\sigma\|_2^2
  +\epsilon\|(\nabla_h\sigma,\nabla_h\Delta\sigma)\|_2^2
  \leq C\left(1+\|\partial_z\sigma\|_{H^2}^\frac32\right),\\
  &\frac{d}{dt}\|(p,\nabla_h\Delta_hp)\|_2^2+\epsilon\|(\nabla_hp,\nabla_h^2\Delta_hp)\|_2^2\leq C\left(1+\|v\|_{H^4}\right),
\end{align*}
for any $t\in(0,\mathcal T)$,
where $C_*$ is an absolute positive constant, and $C$ is a positive constant depending only on $\gamma$, $\nu$, $\mu$,
$\lambda$, $\underline p$, and $M$.
\end{proposition}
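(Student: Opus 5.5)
Each of the three inequalities comes from an energy estimate on the corresponding equation in the rewritten form (\ref{EQv'})--(\ref{EQp'}). We test with the function and with its third-order (or, for $\sigma$, second-order) derivative. All lower-order and source terms are then controlled by Proposition \ref{PROP-ESTPhis}, the bounds (\ref{ASSUM-APRM})--(\ref{ASSUM-LBsp}), and the commutator estimate (\ref{COME}).

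\emph{Velocity.} The $L^2$ part is routine. Multiply (\ref{EQv'}) by $v$ and use $\|\Phi_1\|_2\le C$, which follows from the $H^1$ bounds on $w$ and $\phi$. For the top order, apply $\nabla\Delta$ to (\ref{EQv'}) and test with $-\nabla\Delta^2 v$; equivalently, pair $\Delta$ of the equation with $\Delta^2 v$ after one integration by parts in time. The boundary conditions $\partial_z v|_{z=0,1}=0$ are preserved under even reflection, so all boundary terms vanish. The leading term gives
\[
\mu\int\sigma|\Delta^2v|^2\ge \tfrac12\mu\underline\sigma\|\Delta^2v\|_2^2 .
\]
The remaining pieces are handled as follows.
\begin{itemize}
\item The $(\mu+\lambda)\sigma\nabla_h\text{div}_hv$ term is positive after integration by parts, up to commutators.
\item The commutators $[\Delta,\sigma]\Delta v$ involve $\nabla\sigma\nabla^3 v$ and $\nabla^2\sigma\nabla^2v$. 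With $\sigma\in H^2$ and $v\in H^3\cap H^4$, Gagliardo--Nirenberg gives bounds of the form $C\|v\|_{H^4}^{3/2}$ or less, after Young absorbs part into the dissipation.
\item The source term is bounded by $\|\Phi_1\|_{H^2}\|\Delta^2 v\|_2$. Proposition \ref{PROP-ESTPhis} then yields exactly $\nu C_*\|\partial_z\sigma\|_{H^2}\|\partial_zv\|_{H^2}\|v\|_{H^4}+C\|v\|_{H^4}$.
\end{itemize}
The delicate point is not to absorb the $\|\partial_z\sigma\|_{H^2}$ term into dissipation here. We keep it explicit with an absolute constant, so that it can later be balanced against the $\sigma$-dissipation.

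\emph{Density.} Apply $\Delta$ to (\ref{EQs'}) and test with $\Delta\sigma$. The term $-\nu\sigma\partial_z^2\sigma$ gives $\nu\int\sigma|\Delta\partial_z\sigma|^2$ after integrating by parts in $z$. This uses $\partial_z\sigma|_{z=0,1}=0$, hence $\partial_z\Delta\sigma|_{z=0,1}=0$ by the equation, or by even extension. The commutator $[\Delta,\sigma]\partial_z^2\sigma$ is $\nabla\sigma\cdot\nabla\partial_z^2\sigma+\Delta\sigma\,\partial_z^2\sigma$. It is bounded by
\[
\|\nabla\sigma\|_4\|\nabla\partial_z^2\sigma\|_2\|\Delta\sigma\|_4+\|\Delta\sigma\|_2\|\partial_z^2\sigma\|_\infty\|\Delta\sigma\|_2 .
\]
Interpolation puts this in the form $C(1+\|\partial_z\sigma\|_{H^2}^{3/2})$. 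The $\epsilon\Delta_h$ term gives the $\epsilon$ dissipation. For the transport term, $\int\Delta(v\cdot\nabla_h\sigma)\Delta\sigma$ is controlled by $\|v\|_{H^3}\|\sigma\|_{H^2}^2$ after the usual integration by parts of $v\cdot\nabla_h\Delta\sigma\,\Delta\sigma$. The source contributes $\|\Phi_2\|_{H^2}\|\sigma\|_{H^2}\le C(1+\|\partial_z\sigma\|_{H^2}^{3/2})$. The $\nu\underline\sigma$ coefficient comes from $\sigma\ge\underline\sigma/2$ together with harmless factor adjustments.

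\emph{Pressure.} This step is purely 2D. Apply $\nabla_h\Delta_h$ to (\ref{EQp'}) and test with $\nabla_h\Delta_hp$. The transport commutator satisfies
\[
\|[\nabla_h\Delta_h,\bar v]\nabla_hp\|_2\le C(\|\nabla\bar v\|_\infty\|p\|_{H^3}+\|\bar v\|_{H^3}\|\nabla_hp\|_\infty)\le C .
\]
The term $\int\bar v\cdot\nabla_h(\nabla_h\Delta_hp)\,\nabla_h\Delta_hp$ integrates by parts to a bound $C\|\nabla\bar v\|_\infty\|p\|_{H^3}^2$. The source gives $\|\Phi_3\|_{H^3}\|p\|_{H^3}\le C(1+\|v\|_{H^4})$. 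This is why the pressure can afford no better than linear growth in $\|v\|_{H^4}$.

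The main obstacle is the velocity commutator estimate. The variable coefficient $\sigma$ is only in $H^2$, yet we need third derivatives of $\sigma\Delta v$. The term $\nabla^3\sigma\,\nabla v$ would require $\sigma\in H^3$, which is unavailable. I would avoid it by testing with $\Delta^2 v$ against $\Delta$ of the equation rather than differentiating three times. With this choice only $\nabla^2\sigma$ ever appears, and it is paired with $\nabla^2 v\in L^\infty$ via $\|v\|_{H^4}^{1/2}\|v\|_{H^3}^{1/2}$. That pairing is what produces the power $3/2$ in the velocity inequality.
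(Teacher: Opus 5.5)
\textbf{The $\sigma$-estimate has a genuine gap.} Your velocity and pressure estimates follow the paper's route: pairing $-\Delta$ of \eqref{EQv'} with $\Delta^2v$, bounding $[\Delta,\sigma]\nabla^2v$ by $\|\nabla\sigma\|_6\|\nabla^3v\|_3+\|\sigma\|_{H^2}\|\nabla^2v\|_\infty$, and applying $\nabla_h\Delta_h$ to \eqref{EQp'}. The problem is in the density step. You split $\Delta(\sigma\partial_z^2\sigma)=\sigma\Delta\partial_z^2\sigma+[\Delta,\sigma]\partial_z^2\sigma$, test the commutator directly against $\Delta\sigma$, and bound it by $\|\nabla\sigma\|_4\|\nabla\partial_z^2\sigma\|_2\|\Delta\sigma\|_4+\|\Delta\sigma\|_2^2\|\partial_z^2\sigma\|_\infty$. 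Neither $\|\Delta\sigma\|_4$ nor $\|\partial_z^2\sigma\|_\infty$ is controlled by what you have, namely $\|\sigma\|_{H^2}\le C$ and the vertical dissipation $\|\partial_z\sigma\|_{H^2}$:
\begin{itemize}
\item the part $\Delta_h\sigma$ of $\Delta\sigma$ has only $L^2$ horizontal regularity (you know $\partial_z\Delta_h\sigma\in L^2$, but nothing about $\nabla_h^3\sigma$), so it is not bounded in $L^4$ by these quantities; a $z$-independent $\sigma$ already shows this;
\item $\partial_z^2\sigma\in H^1(\mathcal O)\not\subset L^\infty$.
\end{itemize}
No interpolation repairs this. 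It would need either $\|\nabla_h\Delta\sigma\|_2$, which is available only through the $\epsilon$-dissipation and would destroy the $\epsilon$-independence of $C$, or $\|\partial_z\sigma\|_{H^3}$, which is not available at all. So the real difficulty is the degenerate, purely vertical diffusion in the $\sigma$-equation, not the velocity commutator you single out.

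\textbf{The missing idea} is to move one vertical derivative before commuting. The paper writes $\sigma\partial_z^2\sigma=\partial_z(\sigma\partial_z\sigma)-|\partial_z\sigma|^2$ and integrates by parts in $z$, obtaining
\[
\nu\int\Delta(\sigma\partial_z^2\sigma)\Delta\sigma=-\nu\int\sigma|\Delta\partial_z\sigma|^2-\nu\int\partial_z\sigma\,\Delta\partial_z\sigma\,\Delta\sigma-\nu\int\left([\Delta,\partial_z\sigma]\partial_z\sigma\,\Delta\sigma+[\Delta,\sigma]\partial_z\sigma\,\Delta\partial_z\sigma\right).
\]
Now $\Delta\sigma$ is only tested against $L^2$ quantities involving at most two derivatives of $\partial_z\sigma$. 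By \eqref{COME},
\[
\|[\Delta,\sigma]\partial_z\sigma\|_2\le C(\|\nabla\sigma\|_6\|\nabla\partial_z\sigma\|_3+\|\sigma\|_{H^2}\|\partial_z\sigma\|_\infty),
\]
\[
\|[\Delta,\partial_z\sigma]\partial_z\sigma\|_2\le C(\|\nabla\partial_z\sigma\|_3\|\partial_z\sigma\|_{W^{1,6}}+\|\partial_z\sigma\|_{H^2}\|\partial_z\sigma\|_\infty),
\]
and Gagliardo--Nirenberg turns every term into $C\|\partial_z\sigma\|_{H^2}^{3/2}$. Equivalently, each of your two bad terms is rescued by one extra integration by parts in $z$, e.g.\ $\int|\Delta\sigma|^2\partial_z^2\sigma=-2\int\partial_z\sigma\,\Delta\sigma\,\Delta\partial_z\sigma$.

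\textbf{A smaller point on boundary terms.} These integrations by parts, and the one against $\Delta^2v$, require $\partial_z^3v=\partial_z^3\sigma=0$ at $z=0,1$. Even reflection of a function with merely $\partial_zv|_{z=0,1}=0$ does not yield an $H^4$ extension, so this needs justification. The paper obtains it by applying $\partial_z$ to \eqref{EQv} and \eqref{EQs} on the boundary, using $w|_{z=0,1}=0$ (from $\int_0^1\phi\,dz=0$) and $\partial_z\phi|_{z=0,1}=0$.
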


\begin{proof}
Noticing that $\int_0^1\tilde fdz=0$ and since $p$ is independent of $z$, it is clear from the expression of $\phi(v,p)$ in (\ref{phi}) that
$\int_0^1\phi(v,p)dz=0.$
Thus, recalling (\ref{EQw}) and by the boundary condition (\ref{BC2}), one has
\begin{equation}
  \label{BCw}
  w|_{z=0,1}=0.
\end{equation}
Besides, recalling (\ref{Q}) and using the boundary condition $\partial_zv|_{z=0,1}=0$, it holds that
\begin{equation}
  \label{BCphi2}
  \partial_z\phi(v,p)|_{z=0,1}=0.
\end{equation}
Taking $\partial_z$ to \eqref{EQv} and \eqref{EQs}, and recalling (\ref{BC2}), \eqref{BCw}, and \eqref{BCphi2}, one gets
\begin{equation}
  \label{BC3}
  \partial_z^3v|_{z=0,1}=0,\quad\partial_z^3\sigma|_{z=0,1}=0.
\end{equation}

We first carry out the energy inequalities for $v$. Multiplying \eqref{EQv'} with $v$, integrating the resultant over $\mathcal O$, one obtains by Proposition \ref{PROP-ESTPhis} that
\begin{eqnarray}
  \frac12\frac{d}{dt}\|v\|_2^2&=&\int\sigma(\mu\Delta v+(\mu+\lambda)\nabla_h\text{div}_hv)\cdot vdxdydz+\int\Phi_1\cdot vdxdydz \nonumber\\
  &\leq& C\|\sigma\|_2\|\nabla^2v\|_2\|v\|_\infty+\|\Phi_1\|_1\|v\|_\infty\nonumber\\
  &\leq& C\|\sigma\|_{2}\|v\|_{H^2}^2+\|v\|_{H^2}\leq C.\label{EQ16}
\end{eqnarray}
Applying $-\Delta$ to \eqref{EQv'}, multiplying the resultant with $\Delta^2v$, integrating over $\mathcal O$, and using the boundary
conditions \eqref{BC1}, \eqref{BC2}, and \eqref{BC3}, it follows from integrating by parts that
\begin{eqnarray}
  \frac12\frac{d}{dt}\|\nabla\Delta v\|_2^2
  &=&-\int\Delta(\mu\sigma\Delta v+(\mu+\lambda)\sigma\nabla_h\text{div}_hv+\Phi_1)\cdot\Delta^2vdxdydz\nonumber\\
  &=&-\int(\mu\sigma\Delta^2v+(\mu+\lambda)\sigma\nabla_h\text{div}_h\Delta v+\Delta\Phi_1)\cdot\Delta^2vdxdydz\nonumber\\
  &&-\int(\mu[\Delta,\sigma]\Delta v+(\mu+\lambda)[\Delta,\sigma]\nabla_h\text{div}_hv)\cdot\Delta^2v dxdydz. \label{EQ17}
\end{eqnarray}
Using the boundary conditions (\ref{BC1}), \eqref{BC2}, and \eqref{BC3}, it follows from integrating by parts that
\begin{eqnarray*}
  -\int\sigma\nabla_h\text{div}_h\Delta v\cdot\Delta^2vdxdydz
  =\int(\sigma\text{div}_h\Delta v\Delta^2\text{div}_hv+\text{div}_h\Delta v\nabla_h\sigma\cdot\Delta^2v)dxdydz\\
  =-\int[\sigma|\nabla\text{div}_h\Delta v|^2+\text{div}_h\Delta v(\nabla\sigma\cdot\nabla\Delta\text{div}_hv+\nabla_h\sigma\cdot\Delta^2v)]dxdydz.
\end{eqnarray*}
Substituting this into (\ref{EQ17}), recalling \eqref{ASSUM-LBsp}, applying Proposition \ref{PROP-ESTPhis}, and noticing that $\mu+\lambda>0$,
one deduces
\begin{eqnarray}
  &&\frac12\frac{d}{dt}\|\nabla\Delta v\|_2^2+0.5\underline\sigma \mu\|\Delta^2v\|_2^2\nonumber \\
  &\leq& C\int(|[\Delta,\sigma]\nabla^2v||\nabla^4v|+|\nabla\sigma||\nabla^3v||\nabla^4v|)dxdydz+ \|\Phi_1\|_{H^2}\|v\|_{H^4}\nonumber\\
  &\leq& C\int(|[\Delta,\sigma]\nabla^2v||\nabla^4v|+|\nabla\sigma||\nabla^3v||\nabla^4v|)dxdydz\nonumber\\
  &&+ (\nu C_*\|\partial_z\sigma\|_{H^2}\|\partial_zv\|_{H^2}+C)\|v\|_{H^4},\label{EQ18}
\end{eqnarray}
where $C_*$ is an absolute positive constant.
Applying \eqref{COME} and by assumption \eqref{ASSUM-APRM}, it follows from the H\"older, Sobolev, and Gagliardo-Nirenberg inequalities that
\begin{eqnarray}
  &&\int|[\Delta,\sigma]\nabla^2v||\nabla^4v|dxdydz\nonumber\\
  &\leq& \|[\Delta,\sigma]\nabla^2v\|_2\|v\|_{H^4} \leq C\left(\|\nabla\sigma\|_6\|\nabla^2v\|_{W^{1,3}}+\|\sigma\|_{H^2}\|\nabla^2v\|_\infty\right)\|v\|_{H^4}\nonumber\\
  &\leq& C\|\sigma\|_{H^2}\|v\|_{H^3}^\frac12\|v\|_{H^4}^\frac12\|v\|_{H^4}\leq C\|v\|_{H^4}^\frac32 \label{EQ19}
\end{eqnarray}
and
\begin{eqnarray}
  \int|\nabla\sigma||\nabla^3v||\nabla^4v|dxdydz
  &\leq&\|\nabla\sigma\|_6\|\nabla^3v\|_3\|\nabla^4v\|_2\nonumber\\
  &\leq& C\|\sigma\|_{H^2}\|v\|_{H^3}^\frac12\|v\|_{H^4}^\frac32 \leq C\|v\|_{H^4}^\frac32. \label{EQ20}
\end{eqnarray}
Substituting (\ref{EQ19}) and (\ref{EQ20}) into (\ref{EQ18}) and adding the resultant with (\ref{EQ16}), it follows
\begin{equation}
  \frac{d}{dt}\|(v,\nabla\Delta v)\|_2^2+\underline\sigma \mu\|\Delta^2v\|_2^2
  \leq C(1+\|v\|_{H^4}^\frac32)+\nu C_*\|\partial_z\sigma\|_{H^2}\|v\|_{H^4}\|\partial_zv\|_{H^2},\label{EQ34}
\end{equation}
proving the first conclusion.

Next, we carry out the energy inequality for $\sigma$. Multiplying \eqref{EQs'} with $\sigma$
and integrating over $\mathcal O$, it follows from integrating
by parts, the H\"older and Sobolev inequalities, (\ref{ASSUM-APRM}), and Proposition \ref{PROP-ESTPhis} that
\begin{eqnarray}
  \frac12\frac{d}{dt}\|\sigma\|_2^2+\epsilon\|\nabla_h\sigma\|_2^2
  &=&\int(\nu\sigma\partial_z^2\sigma-v\cdot\nabla_h\sigma+\Phi_2)\sigma dxdydz\nonumber \\
  &\leq& C\|\sigma\|_\infty(\|\sigma\|_2\|\partial_z^2\sigma\|_2+\|v\|_2\|\nabla_h\sigma\|_2+\|\Phi_2\|_1)\nonumber\\
  &\leq& C(\|\sigma\|_{H^2}^3+\|v\|_2\|\sigma\|_{H^2}^2+\|\sigma\|_{H^2}\|\Phi_2\|_1)\leq C. \label{EQ23}
\end{eqnarray}
Applying $\Delta$ to \eqref{EQs'}, multiplying the resultant with $\Delta\sigma$, using the boundary conditions \eqref{BC1}, \eqref{BC2}, and \eqref{BC3},
it follows from integrating by parts that
\begin{eqnarray*}
  &&\frac12\frac{d}{dt}\|\Delta\sigma\|_2^2+\epsilon\|\nabla_h\Delta\sigma\|_2^2 \nonumber\\
  &=&\nu\int(\sigma\Delta\partial_z^2\sigma+[\Delta,\sigma]\partial_z^2\sigma)\Delta\sigma dxdydz+\int\Delta(\Phi_2-v\cdot\nabla_h\sigma)\Delta\sigma dxdydz\nonumber\\
  &=&-\nu\int(\sigma|\Delta\partial_z\sigma|^2+\partial_z\sigma\Delta\partial_z\sigma\Delta\sigma)dxdydz\nonumber\\
  &&-\nu\int([\Delta,\partial_z\sigma]\partial_z\sigma\Delta\sigma+
  [\Delta,\sigma]\partial_z\sigma\Delta\partial_z\sigma) dxdydz\nonumber\\
  &&+\int\Delta(\Phi_2-v\cdot\nabla_h\sigma)\Delta\sigma dxdydz
\end{eqnarray*}
and thus
\begin{eqnarray}
  &&\frac12\frac{d}{dt}\|\Delta\sigma\|_2^2+\epsilon\|\nabla_h\Delta\sigma\|_2^2+\nu\int\sigma|\Delta\partial_z\sigma|^2dxdydz \nonumber\\
  &=&-\nu\int \partial_z\sigma\Delta\partial_z\sigma\Delta\sigma dxdydz
 -\nu\int([\Delta,\partial_z\sigma]\partial_z\sigma\Delta\sigma+
  [\Delta,\sigma]\partial_z\sigma\Delta\partial_z\sigma) dxdydz\nonumber\\
  &&+\int\Delta(\Phi_2-v\cdot\nabla_h\sigma)\Delta\sigma dxdydz.\label{EQ25}
\end{eqnarray}
Using \eqref{ASSUM-APRM}, it follows from the H\"older, Sobolev, and Gagliardo-Nirenberg inequalities that
\begin{eqnarray}
\left|\int\partial_z\sigma\Delta\partial_z\sigma\Delta\sigma dxdydz\right|\leq \|\partial_z\sigma\|_\infty\|\Delta\partial_z\sigma\|_2\|\Delta
\sigma\|_2\nonumber\\
\leq C\|\partial_z\sigma\|_{H^1}^\frac12\|\partial_z\sigma\|_{H^2}^\frac32\|\sigma\|_{H^2}\leq C\|\partial_z\sigma\|_{H^2}^\frac32.\label{EQ26}
\end{eqnarray}
Applying \eqref{COME} and recalling \eqref{ASSUM-APRM}, it follows from the H\"older, Sobolev, and Gagliardo-Nirenberg inequalities that
\begin{eqnarray}
  \left|\int[\Delta,\partial_z\sigma]\partial_z\sigma\Delta\sigma dxdydz\right|
  &\leq&\|[\Delta,\partial_z\sigma]\partial_z\sigma\|_2\|\Delta\sigma\|_2\nonumber\\
  &\leq& C(\|\nabla\partial_z\sigma\|_3\|\partial_z\sigma\|_{W^{1,6}}
  +\|\partial_z\sigma\|_{H^2}\|\partial_z\sigma\|_\infty)\|\sigma\|_{H^2}\nonumber\\
  &\leq& C\|\partial_z\sigma\|_{H^1}^\frac12\|\partial_z\sigma\|_{H^2}^\frac32\|\sigma\|_{H^2}\leq C\|\partial_z\sigma\|_{H^2}^\frac32
  \label{EQ27}\\
  \left|\int[\Delta,\sigma]\partial_z\sigma\Delta\partial_z\sigma dxdydz\right|&\leq&\|[\Delta,\sigma]\partial_z\sigma\|_2\|\Delta\partial_z
  \sigma\|_2\nonumber\\
  &\leq&C(\|\nabla\sigma\|_6\|\partial_z\sigma\|_{W^{1,3}}+\|\sigma\|_{H^2}\|\partial_z\sigma\|_\infty)\|\partial_z\sigma\|_{H^2}\nonumber\\
  &\leq&C\|\sigma\|_{H^2}\|\partial_z\sigma\|_{H^1}^\frac12\|\partial_z\sigma\|_{H^2}^\frac32\leq C\|\partial_z\sigma\|_{H^2}^\frac32,
  \label{EQ28}
\end{eqnarray}
and
\begin{eqnarray}
  &&-\int\Delta(v\cdot\nabla_h\sigma)\Delta\sigma dxdydz\nonumber\\
  &=&-\int(v\cdot\nabla_h\Delta\sigma\Delta\sigma+[\Delta,v\cdot\nabla_h]\sigma\Delta\sigma)dxdydz\nonumber\\
  &\leq&\frac12\int\text{div}_hv|\Delta\sigma|^2dxdydz +\|[\Delta,v\cdot\nabla_h]\sigma\|_2\|\Delta\sigma\|_2\nonumber\\
  &\leq&C\|v\|_{H^3}\|\sigma\|_{H^2}^2+C(\|\nabla v\|_\infty\|\nabla_h\sigma\|_{H^1}+\|v\|_{W^{2,6}}\|\nabla_h\sigma\|_3)
  \|\Delta\sigma\|_2\nonumber\\
  &\leq&C\|v\|_{H^3}\|\sigma\|_{H^2}^2\leq C.\label{EQ29}
\end{eqnarray}
By Proposition \ref{PROP-ESTPhis} and recalling \eqref{ASSUM-APRM}, it follows
\begin{equation}
  \int\Delta\Phi_2\Delta\sigma dxdydz\leq C\|\Phi_2\|_{H^2}\|\sigma\|_{H^2}\leq C\left(1+\|\partial_z\sigma\|_{H^2}^\frac32\right). \label{EQ30}
\end{equation}
Substituting (\ref{EQ26})--(\ref{EQ30}) into \eqref{EQ25} and adding the resultant with \eqref{EQ23}, it follows from
\eqref{ASSUM-LBsp} and the Young inequality that
\begin{equation}
  \frac{d}{dt}\|(\sigma,\Delta\sigma)\|_2^2+\epsilon\|(\nabla_h\sigma,\nabla_h\Delta\sigma)\|_2^2
  +\nu\underline\sigma\|\Delta\partial_z\sigma\|_2^2\leq C\left(1+\|\partial_z\sigma\|_{H^2}^\frac32\right), \label{EQ31}
\end{equation}
proving the second conclusion.

Finally, we carry out the energy inequality for $p$. Multiplying \eqref{EQp} with $p$ and integrating over $\mathbb T^2$, it following from
Proposition \ref{PROP-ESTPhis} and \eqref{ASSUM-APRM} that
\begin{eqnarray}
  &&\frac12\frac{d}{dt}\|p\|_2^2+\epsilon\|\nabla_hp\|_2^2= \int(\Phi_3-\bar v\cdot\nabla_hp)pdxdy\nonumber
  \\
  &\leq& \|\Phi_3\|_1\|p\|_\infty+\|\bar v\|_2\|\nabla_hp\|_2\|p\|_\infty\leq C(\|p\|_{H^2}+\|v\|_2\|p\|_{H^2}^2)\leq C.
  \label{EQ32}
\end{eqnarray}
Applying $\nabla_h\Delta_h$ to \eqref{EQp}, multiplying the resultant with $\nabla_h\Delta_hp$, and integrating over $\mathbb T^2$, one
obtains by integrating by parts, \eqref{COME}, Proposition \ref{PROP-ESTPhis}, and \eqref{ASSUM-APRM} that
\begin{eqnarray*}
  &&\frac12\frac{d}{dt}\|\nabla_h\Delta_hp\|_2^2+\epsilon\|\nabla_h^2\Delta_hp\|_2^2\nonumber\\
  &=&\int\Big(\nabla_h\Delta_h\Phi_3-(\bar v\cdot\nabla_h)\nabla_h\Delta_hp-[\nabla_h\Delta_h,\bar v\cdot\nabla_h]p\Big)\cdot\nabla_h\Delta_hpdxdy \nonumber\\
  &\leq&\|\Phi_3\|_{H^3}\|p\|_{H^3}+\frac12\int\text{div}_h\bar v|\nabla_h\Delta_hp|^2dxdy+\|[\nabla_h\Delta_h,\bar v\cdot\nabla_h]p\|_2\|\nabla_h\Delta_h
  p\|_2\nonumber\\
  &\leq&C(1+\|v\|_{H^4})\|p\|_{H^3}+C\|\nabla v\|_\infty\|p\|_{H^3}^2\nonumber\\
  &&+C(\|\nabla_h\bar v\|_\infty\|\nabla_hp\|_{H^2}+\|\bar v\|_{H^3}\|\nabla_hp\|_\infty)\|p\|_{H^3}\nonumber\\
  &\leq&C(1+\|v\|_{H^4})\|p\|_{H^3}+C\|v\|_{H^3}\|p\|_{H^3}^2\leq C(1+\|v\|_{H^4}).
\end{eqnarray*}
Summing this with \eqref{EQ32} leads to
\begin{equation}
  \frac{d}{dt}\|(p,\nabla_h\Delta_hp)\|_2^2+\epsilon\|(\nabla_hp,\nabla_h^2\Delta_hp)\|_2^2
  \leq C(1+\|v\|_{H^4}),\label{EQ33}
\end{equation}
proving the third conclusion.
\end{proof}

\begin{proposition}
\label{PROP-EST-UnderM}
Assume that all the assumptions stated in Proposition \ref{PROP-ESTPhis} hold and let $X_0$ be an arbitrary positive constant such that
$$
\|v_0\|_{H^3}^2+\|\sigma_0\|_{H^2}^2+\|p_0\|_{H^3}^2\leq X_0.
$$
Then, it holds that
\begin{align*}
(\|v\|_{H^3}^2&+\|\sigma\|_{H^2}^2+\|p\|_{H^3}^2)(t)+\epsilon\int_0^t(\|\nabla_h\sigma\|_{H^2}^2+\|\nabla_hp\|_{H^3}^2)ds\nonumber\\
  &+\int_0^t(\|v\|_{H^4}^2+\|\partial_z\sigma\|_{H^2}^2)ds \leq e^{C_\sharp(X_0+Ct) },
\end{align*}
for any $t\in(0,\mathcal T)$,
where constant $C_\sharp\geq1$ depends only on $\underline\sigma$, $\nu,$ and $\mu,$ while constant $C$ depends only on $\gamma$, $\nu$, $\mu$,
$\lambda$, $\underline\sigma$, $\underline p$, and $M$.
\end{proposition}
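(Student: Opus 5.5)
The plan is to integrate the three differential inequalities of Proposition~\ref{PROP-EGYINEQT} in a specific order: first $\sigma$, then $v$, then $p$. The only term that cannot be handled with a constant depending on $M$ is
$$
\nu C_*\|\partial_z\sigma\|_{H^2}\|v\|_{H^4}\|\partial_zv\|_{H^2}
$$
in the $v$-inequality. The idea is to treat $\|\partial_z\sigma\|_{H^2}^2$ as a Gronwall weight whose time integral is already controlled by the $\sigma$-estimate. That is why the exponent $C_\sharp(X_0+Ct)$ carries the initial size $X_0$ with a constant $C_\sharp$ independent of $M$.

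\emph{Norm equivalences.} First I record the equivalences needed to pass from the quantities in Proposition~\ref{PROP-EGYINEQT} to Sobolev norms.
\begin{itemize}
\item Since $\partial_z\sigma|_{z=0,1}=0$ and we are horizontally periodic, the elliptic estimate for $\partial_z\sigma$ under Dirichlet conditions in $z$ gives $\|\partial_z\sigma\|_{H^2}\le C_0\|\Delta\partial_z\sigma\|_2$.
\item Using the Neumann conditions (\ref{BC2}), $\|\sigma\|_{H^2}^2\simeq\|(\sigma,\Delta\sigma)\|_2^2$.
\item Using (\ref{BC2}) and \eqref{BC3}, $\|v\|_{H^3}^2\simeq\|(v,\nabla\Delta v)\|_2^2$ and $\|v\|_{H^4}^2\le C_0(\|\Delta^2v\|_2^2+\|v\|_2^2)$.
\item On $\mathbb T^2$, $\|p\|_{H^3}^2\simeq\|(p,\nabla_h\Delta_hp)\|_2^2$, and similarly for the $\epsilon$-terms.
\end{itemize}
All constants $C_0$ here are absolute. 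This is needed so that $C_\sharp$ depends only on $\underline\sigma,\nu,\mu$.

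\emph{The $\sigma$-estimate.} In the second inequality of Proposition~\ref{PROP-EGYINEQT}, Young's inequality gives $C\|\partial_z\sigma\|_{H^2}^{3/2}\le\frac{\nu\underline\sigma}{2C_0}\|\partial_z\sigma\|_{H^2}^2+C$. This term is then absorbed into the dissipation $\nu\underline\sigma\|\Delta\partial_z\sigma\|_2^2\ge \frac{\nu\underline\sigma}{C_0}\|\partial_z\sigma\|_{H^2}^2$. Integrating in time yields
$$
\|\sigma(t)\|_{H^2}^2+\epsilon\int_0^t\|\nabla_h\sigma\|_{H^2}^2ds+\int_0^t\|\partial_z\sigma\|_{H^2}^2ds\le C_\sharp(X_0+Ct).
$$

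\emph{The $v$-estimate, which is the main obstacle.} In the first inequality I bound $\|\partial_zv\|_{H^2}\le\|v\|_{H^3}$ and apply Young:
$$
\nu C_*\|\partial_z\sigma\|_{H^2}\|v\|_{H^4}\|\partial_zv\|_{H^2}\le\tfrac{\mu\underline\sigma}{4C_0}\|v\|_{H^4}^2+C_\sharp\|\partial_z\sigma\|_{H^2}^2\|v\|_{H^3}^2 .
$$
The remaining term $C\|v\|_{H^4}^{3/2}$ is absorbed the same way. Setting $E_v:=\|(v,\nabla\Delta v)\|_2^2$, this gives
$$
E_v'+c\|v\|_{H^4}^2\le C_\sharp\|\partial_z\sigma\|_{H^2}^2E_v+C.
$$
Gronwall's inequality, together with the bound on $\int_0^t\|\partial_z\sigma\|_{H^2}^2$ from the $\sigma$-estimate, then gives
$$
E_v(t)+\int_0^t\|v\|_{H^4}^2\,ds\le (X_0+Ct)\,e^{C_\sharp(X_0+Ct)}.
$$
The delicate point is that the weight must involve only the $\sigma$-dissipation, whose integral is bounded independently of $M$. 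One could instead try to absorb $\|\partial_z\sigma\|_{H^2}^2\|v\|_{H^3}^2$ into the $\sigma$-dissipation, but that requires smallness of $\|v\|_{H^3}$ and is not available.

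\emph{The $p$-estimate and conclusion.} Integrating the third inequality and using Cauchy–Schwarz,
$$
\int_0^t\|v\|_{H^4}\,ds\le t^{1/2}\Bigl(\int_0^t\|v\|_{H^4}^2\,ds\Bigr)^{1/2},
$$
so $\|p\|_{H^3}^2+\epsilon\int_0^t\|\nabla_hp\|_{H^3}^2$ is bounded by $X_0+Ct+Ct^{1/2}(\cdots)^{1/2}$. Summing the three bounds, I use $x\le e^x$ and enlarge $C_\sharp$ and $C$ so that every polynomial prefactor in $X_0+Ct$ is absorbed into the exponential. This gives $e^{C_\sharp(X_0+Ct)}$ with $C_\sharp\ge1$ depending only on $\underline\sigma,\nu,\mu$, and $C$ depending on $M$ and the other parameters.
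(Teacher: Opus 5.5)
Your proposal is correct and follows essentially the same route as the paper. You integrate the $\sigma$-inequality first, then apply Gr\"onwall to the $v$-inequality with weight $\|\partial_z\sigma\|_{H^2}^2$ (equivalently $\|\Delta\partial_z\sigma\|_2^2$), whose time integral is bounded by $X_0+Ct$ with an $M$-independent constant, and finally integrate the $p$-inequality. The differences are cosmetic: you bound $\|\partial_zv\|_{H^2}$ by $\|v\|_{H^3}$ rather than by $C_a\|\nabla\Delta v\|_2$, and you control $\int_0^t\|v\|_{H^4}\,ds$ by Cauchy--Schwarz in time rather than by Young absorption into $\|\Delta^2v\|_2^2$.
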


\begin{proof}
For simplicity, throughout the proof of this proposition, we use $C_a$ to denote an absolute positive constant, which in particular is
independent of $M$, while $C$ is a positive constant depending only on $\gamma$, $\nu$, $\mu$,
$\lambda$, $\underline\sigma$, $\underline p$, and $M$.

Recalling \eqref{BC1}, \eqref{BC2}, and \eqref{BC3}, which imply $\partial_z\Delta v|_{z=0,1}=0$, and using
\eqref{ASSUM-APRM}, it follows from the elliptic estimate that
\begin{eqnarray}
  \|v\|_{H^4}&\leq&{C_a} (\|\Delta v\|_{H^2}+\|v\|_2)\leq{C_a} (\|\Delta^2v\|_2+\|\Delta v\|_2+\|v\|_2)\nonumber\\
  &=& {C_a} (\|\Delta^2v\|_2+\|(v,\Delta v)\|_2)\label{H4vELLIP}
\end{eqnarray}
and
\begin{equation}
  \label{H2zvsELLIP}
  \|\partial_zv\|_{H^2}\leq {C_a}\|\Delta\partial_zv\|_2,\quad
\|\partial_z\sigma\|_{H^2}\leq {C_a}\|\Delta\partial_z\sigma\|_2.
\end{equation}
Note that the
lower order norms are not required in the estimates of \eqref{H2zvsELLIP} as the solution to the Laplace equation subject to
homogeneous Dirichlet boundary conditions on top and bottom and horizontally periodic condition is unique.
Thanks to \eqref{H4vELLIP} and using \eqref{ASSUM-APRM}, one gets
\begin{equation}
  \|v\|_{H^4}\leq {C_a}(\|\Delta^2v\|_2+C).\label{H4vELLIP-UnderM}
\end{equation}

With the aid of \eqref{H2zvsELLIP}, one gets from Proposition \ref{PROP-EGYINEQT} that
\begin{eqnarray*}
  \frac{d}{dt}\|(\sigma,\Delta\sigma)\|_2^2+\epsilon\|(\nabla_h\sigma,\nabla_h\Delta\sigma)\|_2^2+\nu\underline\sigma
  \|\Delta\partial_z\sigma\|_2^2\nonumber\\
  \leq C\left(1+\|\Delta\partial_z\sigma\|_2^\frac32\right)\leq\frac{\nu\underline\sigma}{2}\|\Delta\partial_z\sigma\|_2^2+C
\end{eqnarray*}
and thus
\begin{align}
  \|(\sigma,\Delta\sigma)\|_2^2(t)&+\epsilon\int_0^t\|(\nabla_h\sigma,\nabla_h\Delta\sigma)\|_2^2ds\nonumber\\
  &+\frac{\nu\underline\sigma}{2}\int_0^t\|
  \Delta\partial_z\sigma\|_2^2ds\leq\|(\sigma_0,\Delta\sigma_0)\|_2^2+Ct. \label{ESTs-UnderM}
\end{align}

By Proposition \ref{PROP-EGYINEQT}, it follows from \eqref{H2zvsELLIP}, \eqref{H4vELLIP-UnderM}, and the Young inequality that
\begin{eqnarray*}
  &&\frac{d}{dt}\|(v,\nabla\Delta v)\|_2^2+\mu\underline\sigma\|\Delta^2v\|_2^2\\
  &\leq& C\left(1+\|\Delta^2v\|_2^\frac32\right)+\nu{C_a}\|\partial_z\Delta\sigma\|_2(\|\Delta^2v\|_2+C)\|\partial_z\Delta v\|_2\\
  &\leq& \frac{\mu\underline\sigma}{2}\|\Delta^2v\|_2^2+\frac{\nu^2{C_a}}{\mu\underline\sigma}\|\Delta\partial_z\sigma\|_2^2
  \|\partial_z\Delta v\|_2^2+C
\end{eqnarray*}
and thus
\begin{eqnarray*}
  \frac{d}{dt}\|(v,\nabla\Delta v)\|_2^2+\frac{\underline\sigma \mu}{2}\|\Delta^2v\|_2^2
   \leq  \frac{\nu^2{C_a}}{\mu\underline\sigma}\|\Delta\partial_z\sigma\|_2^2\|\nabla\Delta v\|_2^2+C.
\end{eqnarray*}
Combining this with \eqref{ESTs-UnderM} leads to
\begin{eqnarray}
  &&\|(v,\nabla\Delta v)\|_2^2(t)+\frac{\underline\sigma \mu}{2}\int_0^t\|\Delta^2v\|_2^2ds\nonumber\\
  &\leq& (\|(v_0,\nabla\Delta v_0)\|_2^2+Ct)\exp\left\{\frac{\nu^2{C_a}}{\mu\underline\sigma}\int_0^t\|\Delta\partial_z\sigma\|_2^2 ds \right\}\nonumber\\
  &\leq& e^{\frac{2\nu{C_a}}{\mu\underline\sigma^2}(\|(\sigma_0,\Delta\sigma_0)\|_2^2+Ct) }(\|(v_0,\nabla\Delta v_0)\|_2^2+Ct).
  \label{ESTv-UnderM}
\end{eqnarray}

It follows from Proposition \ref{PROP-EGYINEQT}, \eqref{H4vELLIP-UnderM}, and the Young inequality that
\begin{eqnarray*}
  &&\frac{d}{dt}\|(p,\nabla_h\Delta_hp)\|_2^2+\epsilon\|(\nabla_hp,\nabla_h^2\Delta_hp)\|_2^2\\
  &\leq&C(1+\|v\|_{H^4})\leq C(1+\|\Delta^2v\|_2)
  \leq \frac{ \mu\underline\sigma}{2}\|\Delta^2v\|_2^2+C,
\end{eqnarray*}
from which by \eqref{ESTv-UnderM} one gets
\begin{eqnarray}
&&\|(p,\nabla_h\Delta_hp)\|_2^2(t)+\epsilon\int_0^t\|(\nabla_hp,\nabla_h^2\Delta_hp)\|_2^2ds\nonumber\\
&\leq&\|(p_0,\nabla_h\Delta_hp_0)\|_2^2+ e^{\frac{2\nu {C_a}}{\mu\underline\sigma^2}(\|(\sigma_0,\Delta\sigma_0)\|_2^2+Ct) }(\|(v_0,\nabla\Delta v_0)\|_2^2+Ct)\nonumber\\
&\leq& e^{\frac{2\nu {C_a}}{\mu\underline\sigma^2}(\|(\sigma_0,\Delta\sigma_0)\|_2^2+Ct) }(\|(p_0,\nabla_h\Delta_hp_0)\|_2^2+\|(v_0,\nabla\Delta v_0)\|_2^2+Ct).
\label{ESTp-UnderM}
\end{eqnarray}

Combining \eqref{ESTs-UnderM} with \eqref{ESTv-UnderM}, together with \eqref{ESTp-UnderM}, one obtains
\begin{eqnarray*}
  &&\|(v,\nabla\Delta v, \sigma, \Delta\sigma)\|_2^2(t)+\|(p,\nabla_h\Delta_hp)\|_2^2(t)\nonumber\\
  &&+\epsilon\int_0^t(\|(\nabla_h\sigma,\nabla_h\Delta\sigma)\|_2^2+\|(\nabla_hp,\nabla_h^2\Delta_hp)\|_2^2ds\nonumber\\
  &&+\frac{\underline\sigma\mu}{2}\int_0^t\|\Delta^2v\|_2^2ds +\frac{\nu\underline\sigma}{2}\int_0^t\|\Delta\partial_z\sigma\|_2^2ds\nonumber \\
  &\leq& e^{\frac{2\nu {C_a}}{\mu\underline\sigma^2}(\|(\sigma_0,\Delta\sigma_0)\|_2^2+Ct) }(\|(v_0,\nabla\Delta v_0,\sigma_0,\Delta\sigma_0)\|_2^2+\|(p_0,\nabla_h\Delta_hp_0)\|_2^2+Ct),
\end{eqnarray*}
from which, by the elliptic estimate, the conclusion follows.
\end{proof}

\begin{proposition}
  \label{PROP-EST-UnderM-Time}
  Assume that all the assumptions stated in Proposition \ref{PROP-ESTPhis} hold and that $\epsilon\in(0,1)$. Then, it holds that
  \begin{equation*}
    \int_0^t(\|\partial_tv\|_{H^2}^2+\|\partial_t\sigma\|_{H^1}^2+\|\partial_tp\|_{H^2}^2)ds\leq e^{C(1+t)},
  \end{equation*}
  for any $t\in(0,\mathcal T)$, where $C$ is a positive constant depending only on $\gamma$, $\nu$, $\mu,$
$\lambda$, $\underline\sigma$, $\underline p$, $M$, and $X_0$, with $X_0$ being the same as in Proposition \ref{PROP-EST-UnderM}.
\end{proposition}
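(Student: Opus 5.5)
The plan is to read off the time derivatives directly from the equations \eqref{EQv'}, \eqref{EQs'}, \eqref{EQp'}. Each right-hand side is then estimated with Proposition \ref{PROP-ESTPhis}, \eqref{ASSUM-APRM}, and the integrated dissipation bounds of Proposition \ref{PROP-EST-UnderM}. Throughout, $C$ may depend on $X_0$ and $M$. The factor $e^{C_\sharp(X_0+Ct)}$ from Proposition \ref{PROP-EST-UnderM} will be absorbed into $e^{C(1+t)}$.

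\emph{Velocity.} From \eqref{EQv'},
$$\|\partial_tv\|_{H^2}\le C\|\sigma\nabla^2v\|_{H^2}+\|\Phi_1\|_{H^2}.$$
Since $\sigma\in H^2$ is only an $H^2$ multiplier, I will use \eqref{ALGmq} in the form
$$\|\sigma\nabla^2v\|_{H^2}\le C\bigl(\|\sigma\|_\infty\|v\|_{H^4}+\|\sigma\|_{H^2}\|\nabla^2v\|_\infty\bigr)\le C(1+\|v\|_{H^4}).$$
Proposition \ref{PROP-ESTPhis} gives
$$\|\Phi_1\|_{H^2}\le C\bigl(1+\|\partial_z\sigma\|_{H^2}\bigr),$$
because $\|\partial_zv\|_{H^2}\le C$ by \eqref{ASSUM-APRM}. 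Squaring and integrating in time, Proposition \ref{PROP-EST-UnderM} bounds $\int_0^t(\|v\|_{H^4}^2+\|\partial_z\sigma\|_{H^2}^2)ds$ by $e^{C_\sharp(X_0+Ct)}$. Hence $\int_0^t\|\partial_tv\|_{H^2}^2ds\le C(t+e^{C(1+t)})$.

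\emph{Pressure.} From \eqref{EQp'},
$$\|\partial_tp\|_{H^2}\le\|\bar v\cdot\nabla_hp\|_{H^2}+\epsilon\|\Delta_hp\|_{H^2}+\|\Phi_3\|_{H^2}.$$
The first term is $\le C$ by \eqref{ALGHk} on $\mathbb T^2$. The last term is $\le C(1+\|v\|_{H^4})$ by Proposition \ref{PROP-ESTPhis}. For the middle term, $\epsilon\|\Delta_hp\|_{H^2}\le\epsilon C(\|p\|_{H^3}+\|\nabla_h^2\Delta_hp\|_2)$. After squaring, $\epsilon^2\|\nabla_h^2\Delta_hp\|_2^2\le\epsilon\|\nabla_hp\|_{H^3}^2$ since $\epsilon<1$, and this is controlled in time-integrated form by the $\epsilon$-weighted term in Proposition \ref{PROP-EST-UnderM}. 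This is exactly where $\epsilon\in(0,1)$ enters.

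\emph{Density.} From \eqref{EQs'},
$$\|\partial_t\sigma\|_{H^1}\le\|v\cdot\nabla_h\sigma\|_{H^1}+\nu\|\sigma\partial_z^2\sigma\|_{H^1}+\epsilon\|\Delta_h\sigma\|_{H^1}+\|\Phi_2\|_{H^1}.$$
\begin{itemize}
\item The first term is $\le C\|v\|_{H^3}\|\sigma\|_{H^2}\le C$.
\item For the second, Hölder and Sobolev give $\|\sigma\partial_z^2\sigma\|_{H^1}\le C\|\sigma\|_{H^2}\|\partial_z\sigma\|_{H^2}$.
\item The third is handled like the pressure: squaring gives $\epsilon^2\|\nabla_h\sigma\|_{H^2}^2\le\epsilon\|\nabla_h\sigma\|_{H^2}^2$, which is integrable by Proposition \ref{PROP-EST-UnderM}.
\item For $\Phi_2$, I will not use the $H^2$ bound $\|\partial_z\sigma\|_{H^2}^{3/2}$, whose square is not obviously integrable. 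Instead I estimate in $H^1$ directly: $\|w\partial_z\sigma\|_{H^1}\le C\|w\|_{H^1}^{1/2}\|w\|_{H^2}^{1/2}\|\partial_z\sigma\|_{H^1}+C\|w\|_{\infty}\|\partial_z\sigma\|_{H^1}$-type bounds. With \eqref{H1w}--\eqref{H2w}, these give $\le C(1+\|\partial_z\sigma\|_{H^2})$.
\item The terms $\sigma\,\text{div}_hv$ and $\sigma\phi$ are bounded via \eqref{H2phi}.
\end{itemize}
Hence $\|\partial_t\sigma\|_{H^1}^2\le C(1+\|\partial_z\sigma\|_{H^2}^2+\epsilon\|\nabla_h\sigma\|_{H^2}^2)$, which is integrable by Proposition \ref{PROP-EST-UnderM}.

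\emph{Main obstacle.} The delicate points are that only the $\epsilon$-weighted dissipation is available for the regularizing terms, and that every product must stay linear in the dissipated quantities $\|v\|_{H^4}$ and $\|\partial_z\sigma\|_{H^2}$, so that the squares are integrable. The $w\partial_z\sigma$ term is the most delicate. There I expect to need $\|w\|_\infty\le C\|w\|_{H^1}^{1/2}\|w\|_{H^2}^{1/2}\le C(1+\|\partial_z\sigma\|_{H^2})^{1/2}$ and then Young's inequality. Summing the three bounds and using $C(1+t)+e^{C_\sharp(X_0+Ct)}\le e^{C(1+t)}$ concludes.
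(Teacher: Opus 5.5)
Your proposal is correct and follows essentially the paper's proof: you read $\partial_t v$, $\partial_t\sigma$, $\partial_t p$ off \eqref{EQv'}--\eqref{EQp'} and bound each linearly in $\|v\|_{H^4}$, $\|\partial_z\sigma\|_{H^2}$ and the $\epsilon$-terms, estimating $\Phi_2$ directly in $H^1$ as the paper does rather than through its $H^2$ bound. You then conclude with $\epsilon^2\le\epsilon$ and Proposition \ref{PROP-EST-UnderM}; the only differences are cosmetic, such as your Agmon-type bound on $\|w\|_\infty$ where the paper simply uses $\|w\|_\infty\le C\|w\|_{H^2}$.
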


\begin{proof}
  Applying \eqref{ALGHk} and Proposition \ref{PROP-ESTPhis}, it follows from \eqref{EQv'} and \eqref{ASSUM-APRM} that
  \begin{eqnarray}
    \|\partial_tv\|_{H^2}&\leq&\|\Phi_1\|_{H^2}+C\|\sigma\nabla^2v\|_{H^2}\nonumber\\
    &\leq&C(\|\partial_zv\|_{H^2}\|\partial_z\sigma\|_{H^2}+\|\sigma\|_{H^2}\|\nabla^2 v\|_{H^2}+1)\nonumber\\
    &\leq&C(\|\partial_z\sigma\|_{H^2}+\|v\|_{H^4}+1).\label{910-1}
  \end{eqnarray}
  Recalling the expression of \eqref{Phi2} and applying \eqref{ALGmq}, it follows from the Sobolev inequality, \eqref{H2phi}, \eqref{H1w}, \eqref{H2w}, and \eqref{ASSUM-APRM} that
  \begin{eqnarray*}
    &&\|\Phi_2-v\cdot\nabla_h\sigma\|_{H^1}\leq\|\Phi_2\|_{H^1}+\|v\cdot\nabla_h\sigma\|_{H^1}\\
    &\leq&\|w\partial_z\sigma\|_{H^1}+\|\sigma(\text{div}_hv-\phi(v,p))\|_{H^1}+\|v\cdot\nabla_h\sigma\|_{H^1}\\
    &\leq&C(\|w\|_\infty\|\partial_z\sigma\|_{H^1}+\|w\|_{H^1}\|\partial_z\sigma\|_\infty)+C[\|\sigma\|_\infty(\|\text{div}_hv\|_{H^1}+\|\phi(v,p)\|_{H^1})\\
    &&+\|\sigma\|_{H^1}(\|\text{div}_hv\|_\infty+\|\phi(v,p)\|_\infty)]+C(\|v\|_\infty\|\nabla_h\sigma\|_{H^1}+\|v\|_{W^{1,3}}\|\nabla_h\sigma\|_6)\\
    &\leq&C(\|w\|_{H^2}\|\sigma\|_{H^2}+\|w\|_{H^1}\|\partial_z\sigma\|_{H^2})+C\|\sigma\|_{H^2}(\|v\|_{H^3}+\|\phi(v,p)\|_{H^2}+\|v\|_{H^2})\\
    &\leq&C(\|\partial_z\sigma\|_{H^2}+1).
  \end{eqnarray*}
  Thanks to this and using \eqref{ALGmq}, it follows from \eqref{EQs'} and the Sobolev inequality that
  \begin{eqnarray}
    \|\partial_t\sigma\|_{H^1}&\leq&\|\Phi_2-v\cdot\nabla\sigma\|_{H^1}+\epsilon\|\Delta_h\sigma\|_{H^1}+\nu\|\sigma\partial_z^2\sigma\|_{H^1}\nonumber\\
    &\leq&C(\|\partial_z\sigma\|_{H^2}+1)+\epsilon\|\nabla_h\sigma\|_{H^2}+C(\|\sigma\|_\infty\|\partial_z^2\sigma\|_{H^1}+\|\sigma\|_{W^{1,3}}\|\partial_z^2\sigma\|_6)\nonumber\\
    &\leq&C(\|\partial_z\sigma\|_{H^2}+1)+\epsilon\|\nabla_h\sigma\|_{H^2}+C\|\sigma\|_{H^2}\|\partial_z^2\sigma\|_{H^1}\nonumber\\
    &\leq&C(\|\partial_z\sigma\|_{H^2}+1)+\epsilon\|\nabla_h\sigma\|_{H^2}.\label{910-2}
  \end{eqnarray}
  Applying \eqref{ALGHk} and Proposition \ref{PROP-ESTPhis}, it follows from \eqref{ASSUM-APRM} that
  \begin{eqnarray*}
    \|\Phi_3-\bar v\cdot\nabla_hp\|_{H^2}&\leq&\|\Phi_3\|_{H^2}+\|\bar v\cdot\nabla_hp\|_{H^2}\\
    &\leq&C(1+\|v\|_{H^4})+C\|\bar v\|_{H^2}\|\nabla_hp\|_{H^2}\\
    &\leq&C(1+\|v\|_{H^4}),
  \end{eqnarray*}
  from which, by \eqref{EQp'}, one obtains
  \begin{eqnarray}
    \|\partial_tp\|_{H^2}&\leq&\|\Phi_3-\bar v\cdot\nabla_hp\|_{H^2}+\epsilon\|\Delta_hp\|_{H^2}\nonumber\\
    &\leq&C(1+\|v\|_{H^4})+\epsilon\|\nabla_hp\|_{H^3}. \label{910-3}
  \end{eqnarray}
  Thanks to \eqref{910-1}, \eqref{910-2}, and \eqref{910-3}, one obtains by Proposition \ref{PROP-EST-UnderM} that
  \begin{eqnarray*}
    &&\int_0^t(\|\partial_tv\|_{H^2}^2+\|\partial_t\sigma\|_{H^1}^2+\|\partial_tp\|_{H^2}^2)ds\\
    &\leq&C\int_0^t(\|v\|_{H^4}^2+\|\partial_z\sigma\|_{H^2}^2)ds+Ct+2\epsilon^2\int_0^t(\|\nabla_h\sigma\|_{H^2}^2+\|\nabla_hp\|_{H^3}^2)ds\\
    &\leq& Ce^{C_\sharp(X_0+Ct)}+Ct\leq e^{C(1+t)},
  \end{eqnarray*}
  proving the conclusion.
\end{proof}

\subsection{$\epsilon$-dependent conditional energy estimates}
\begin{proposition}
\label{PROP-EST-Depep}
Assume that all the assumptions stated in Proposition \ref{PROP-ESTPhis} hold. Then, it holds that
\begin{align}
\|\sigma\|_{H^3}(t)+\int_0^t\|\sigma\|_{H^4}^2ds\leq e^{e^{C_\epsilon(1+t)}},
\end{align}
for any $t\in(0,\mathcal T)$,
where $C_\epsilon$ is a positive constant depending only on $\epsilon,$ $\gamma$, $\nu$, $\mu$,
$\lambda$, $\underline\sigma$, $\underline p$, $\|(v_0,\sigma_0)\|_{H^3}^2+\|p_0\|_{H^3}^2$, and $M$.
\end{proposition}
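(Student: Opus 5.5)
We propose an $H^3$ energy estimate on \eqref{EQs'} in which the horizontal dissipation $\epsilon\Delta_h\sigma$, together with the vertical term $\nu\sigma\partial_z^2\sigma$, makes the $\sigma$-equation uniformly parabolic with ellipticity constant of order $\min\{\epsilon,\nu\underline\sigma/2\}$. The bounds \eqref{ASSUM-APRM}--\eqref{ASSUM-LBsp} and the $\epsilon$-independent integral bounds of Proposition \ref{PROP-EST-UnderM} will serve as coefficients, and Gronwall will close the argument.

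\textbf{Step 1: the energy identity.} Apply $\nabla\Delta$ to \eqref{EQs'} and test with $\nabla\Delta\sigma$. The boundary terms vanish for the following reasons. Since $\partial_z\sigma$ and $\partial_z^3\sigma$ vanish at $z=0,1$ by \eqref{BC2} and \eqref{BC3}, the function $\partial_z\sigma$ is odd-extendable, so $\partial_z\Delta\sigma$ vanishes on the boundary. An alternative is to test with $-\Delta^3\sigma$ written as $\|\nabla\Delta\sigma\|_2^2$ after integration by parts, using $\partial_z\Delta\sigma|_{z=0,1}=0$. With this, the principal part yields the dissipation
\[
\epsilon\|\nabla_h\nabla\Delta\sigma\|_2^2+\nu\int\sigma|\partial_z\nabla\Delta\sigma|^2 \geq c_\epsilon\|\nabla^2\Delta\sigma\|_2^2,
\]
where $c_\epsilon=\min\{\epsilon,\nu\underline\sigma/2\}$. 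The elliptic estimate then controls $\|\sigma\|_{H^4}$ by this quantity plus $\|\sigma\|_{H^3}$.

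\textbf{Step 2: the remaining terms.} There are three.

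(a) The commutator $[\nabla\Delta,\sigma]\partial_z^2\sigma$. Using \eqref{COME} and Gagliardo--Nirenberg, it is bounded by $C\|\sigma\|_{H^3}\|\sigma\|_{H^4}^{1/2}\|\sigma\|_{H^3}^{1/2}$, or handled after one integration by parts to produce at worst $\|\sigma\|_{H^4}^{3/2}\|\sigma\|_{H^3}^{3/2}$.

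(b) The transport term $v\cdot\nabla_h\sigma$. The top-order part cancels up to $\mathrm{div}_h v$, and the commutator is bounded by $\|v\|_{H^3}\|\sigma\|_{H^3}$.

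(c) The source $\Phi_2$. This is the delicate term. We do not differentiate it three times but integrate by parts once, so that $\int\nabla\Delta\Phi_2\cdot\nabla\Delta\sigma=-\int\Delta\Phi_2\,\Delta^2\sigma$ (plus boundary-free terms), which requires only $\|\Phi_2\|_{H^2}$. The components of $\Phi_2$ are $w\partial_z\sigma$, $\sigma\,\mathrm{div}_h v$ and $\sigma\phi$. By \eqref{H2phi}, $\|\phi\|_{H^2}\le C$ even though $\phi$ contains $\nabla_h p$ with $p\in H^3$ only. By \eqref{H2w}, $\|w\|_{H^2}\le C(1+\|\sigma\|_{H^3})$. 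Hence $\|\Phi_2\|_{H^2}\le C(1+\|\sigma\|_{H^3}^2)$, or better, $C(1+\|\sigma\|_{H^3})$ times lower norms.

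After Young's inequality we expect a bound of the form
\[
\frac{d}{dt}\|\sigma\|_{H^3}^2+c_\epsilon\|\sigma\|_{H^4}^2\le C_\epsilon\big(1+\|v\|_{H^4}^2+\|\partial_z\sigma\|_{H^2}^2\big)\big(1+\|\sigma\|_{H^3}^2\big).
\]

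\textbf{Step 3: Gronwall.} By Proposition \ref{PROP-EST-UnderM}, the coefficient $1+\|v\|_{H^4}^2+\|\partial_z\sigma\|_{H^2}^2$ has time integral at most $e^{C(1+t)}$. Gronwall then gives
\[
\|\sigma\|_{H^3}^2(t)+\int_0^t\|\sigma\|_{H^4}^2\,ds\le\big(\|\sigma_0\|_{H^3}^2+C_\epsilon t\big)\exp\big(e^{C(1+t)}\big),
\]
which is absorbed into $e^{e^{C_\epsilon(1+t)}}$.

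\textbf{Main obstacle.} The hard part is to ensure that no term requires $p\in H^4$ or $v\in H^5$. Two points need care. First, applying $\nabla\Delta$ to $\sigma\phi$ would produce $\nabla_h^4p$; this must be avoided by the integration by parts in Step 2(c). Second, the term $w\partial_z\sigma$ with $w\ni\nu\partial_z\sigma$ yields $\partial_z\sigma\,\nabla^3\partial_z\sigma$. This is quadratic in top-order derivatives but multiplied by $\|\partial_z\sigma\|_\infty\le C\|\sigma\|_{H^3}$, so it can be absorbed by the $\|\sigma\|_{H^4}^2$ dissipation with weight $\|\sigma\|_{H^3}^2$. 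If that weight is too strong for linear Gronwall, we would instead interpolate $\|\partial_z\sigma\|_\infty\le C\|\partial_z\sigma\|_{H^1}^{1/2}\|\partial_z\sigma\|_{H^2}^{1/2}$, which by \eqref{ASSUM-APRM} is controlled by $\|\partial_z\sigma\|_{H^2}^{1/2}$, an integrable quantity. This keeps the inequality linear in $\|\sigma\|_{H^3}^2$.
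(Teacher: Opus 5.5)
Your architecture matches the paper's: differentiate the $\sigma$-equation to the level of $\nabla\Delta\sigma$, pair $\Phi_2$ with $\Delta^2\sigma$ so that only $\|\Phi_2\|_{H^2}$ is needed, absorb into the $\min\{\epsilon,\nu\underline\sigma/2\}$ dissipation, then apply Gr\"onwall. However, the linear inequality you assert at the end of Step 2, on which Step 3 rests, does not follow from the estimates you give.

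The commutator $[\nabla\Delta,\sigma]\partial_z^2\sigma$ contains terms such as $\partial_i\sigma\,\partial_k\partial_i\partial_z^2\sigma$ with $i$ horizontal. Tested against $\partial_k\Delta\sigma$, this is a fourth-order factor times $\nabla_h\sigma$. Its direct bound $\|\nabla_h\sigma\|_\infty\|\sigma\|_{H^4}\|\sigma\|_{H^3}\le C\|\sigma\|_{H^3}^{3/2}\|\sigma\|_{H^4}$ leaves $\|\sigma\|_{H^3}^3$ after Young. The other bounds you give fare no better:
\begin{itemize}
\item Your first bound in (a), carrying only $\|\sigma\|_{H^4}^{1/2}$, misses this term.
\item Your fallback $\|\sigma\|_{H^4}^{3/2}\|\sigma\|_{H^3}^{3/2}$ leaves $\|\sigma\|_{H^3}^6$.
\item The bound $\|\Phi_2\|_{H^2}\le C(1+\|\sigma\|_{H^3}^2)$, paired with $\Delta^2\sigma$, leaves $\|\sigma\|_{H^3}^4$.
\end{itemize}
The interpolation remedy in your last paragraph only helps for factors $\partial_z\sigma$ in $L^\infty$. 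Among the $\sigma$-norms at this level, only $\|\partial_z\sigma\|_{H^2}$ is square-integrable in time uniformly in $\epsilon$, so the remedy does nothing for $\nabla_h\sigma$ or $\nabla^2\sigma$. What your estimates actually yield is a differential inequality superlinear in $y=\|\nabla\Delta\sigma\|_2^2$. A linear Gr\"onwall with coefficient $1+\|v\|_{H^4}^2+\|\partial_z\sigma\|_{H^2}^2$ does not close it.

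The ingredient you overlook is the $\epsilon$-weighted term in Proposition \ref{PROP-EST-UnderM}, namely $\epsilon\int_0^t\|\nabla_h\sigma\|_{H^2}^2ds\le e^{C_\sharp(X_0+Ct)}$. Together with $\int_0^t\|\partial_z\sigma\|_{H^2}^2ds$, it gives $\int_0^t\|\nabla\Delta\sigma\|_2^2ds\le C(1+\epsilon^{-1})e^{C(1+t)}$. With that, crude bounds suffice. The paper estimates everything by $\frac12\min\{\epsilon,\frac{\nu\underline\sigma}{2}\}\|\nabla^2\Delta\sigma\|_2^2+C_\epsilon(1+\|\sigma\|_{H^3}^4)$, i.e.\ $y'\le C_\epsilon(1+y^2)$. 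It then runs Gr\"onwall with coefficient $C_\epsilon y$, giving $y(t)\le(y(0)+C_\epsilon t)\exp\{C_\epsilon\int_0^t y\,ds\}\le e^{e^{C_\epsilon(1+t)}}$. This is exactly where the $\epsilon$-dependence and the double exponential come from.

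Your linear inequality can in fact also be reached, but only with a further idea absent from the proposal. In the commutator terms, integrate by parts once in $z$, moving one vertical derivative off $\partial_z^2\sigma$. The boundary terms vanish by \eqref{BC2}, \eqref{BC3} and $\partial_z\Delta\sigma|_{z=0,1}=0$. After this, every term contains two factors carrying a $\partial_z$, and is bounded by $\delta\|\nabla\Delta\partial_z\sigma\|_2^2+C(1+\|\partial_z\sigma\|_{H^2}^2)(1+\|\sigma\|_{H^3}^2)$. You would also need to keep the bound $\|\Phi_2\|_{H^2}\le C(1+\|\partial_z\sigma\|_{H^2}^{3/2})$ of Proposition \ref{PROP-ESTPhis} and use $\|\partial_z\sigma\|_{H^2}^3\le\|\partial_z\sigma\|_{H^2}^2\|\sigma\|_{H^3}$, rather than a bound purely in $\|\sigma\|_{H^3}$.
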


\begin{proof}
Applying $-\Delta$ to \eqref{EQs'}, multiplying the resultant with $\Delta^2\sigma$, using the boundary conditions \eqref{BC1}, \eqref{BC2}, and \eqref{BC3}, it follows from integrating by parts that
\begin{eqnarray}
  &&\frac12\frac{d}{dt}\|\nabla\Delta\sigma\|_2^2+\epsilon\|\nabla_h\nabla\Delta\sigma\|_2^2\nonumber\\
  &=&\int\Delta(v\cdot\nabla_h\sigma-\Phi_2)\Delta^2\sigma dxdydz-\nu\int\Delta(\sigma\partial_z^2\sigma)\Delta^2\sigma dxdydz. \label{920-1}
\end{eqnarray}
Applying \eqref{ALGHk} and recalling \eqref{ASSUM-APRM}, it follows from Proposition \eqref{PROP-ESTPhis} that
\begin{eqnarray*}
  \|\Phi_2-v\cdot\nabla_h\sigma\|_{H^2}&\leq&\|\Phi_2\|_{H^2}+\|v\cdot\nabla_h\sigma\|_{H^2}\nonumber\\
  &\leq& C\left(1+\|\partial_z\sigma\|_{H^2}^\frac32\right)+C\|v\|_{H^2}\|\nabla_h\sigma\|_{H^2}\nonumber\\
  &\leq& C\left(1+\|\partial_z\sigma\|_{H^2}^\frac32+\|\nabla_h\sigma\|_{H^2}\right)
\end{eqnarray*}
and thus
\begin{eqnarray}
  \int\Delta(v\cdot\nabla_h\sigma-\Phi_2)\Delta^2\sigma dxdydz\leq C\|\Phi_2-v\cdot\nabla_h\sigma\|_{H^2}\|\Delta^2\sigma\|_2\nonumber\\
  \leq C\left(1+\|\partial_z\sigma\|_{H^2}^\frac32+\|\nabla_h\sigma\|_{H^2}\right)\|\Delta^2\sigma\|_2. \label{EQ35}
\end{eqnarray}
Using the boundary conditions \eqref{BC1}, \eqref{BC2}, and \eqref{BC3}, it follows from integrating by parts and \eqref{ASSUM-LBsp}
that
\begin{eqnarray}
  &&-\int\Delta(\sigma\partial_z^2\sigma)\Delta^2\sigma dxdydz\nonumber\\
  &=&-\int(\sigma\Delta\partial_z^2\sigma+[\Delta,\sigma]\partial_z^2\sigma)\Delta^2\sigma dxdydz\nonumber\\
  &=&\int(\sigma\nabla\Delta\partial_z^2\sigma\cdot\nabla\Delta\sigma+\Delta\partial_z^2\sigma\nabla\sigma\cdot\nabla\Delta\sigma-[\Delta,\sigma]\partial_z^2\sigma\Delta^2\sigma)dxdydz\nonumber\\
  &=&-\int(\sigma\nabla\Delta\partial_z\sigma\cdot\nabla\Delta\partial_z\sigma+\partial_z\sigma\nabla\Delta\partial_z
  \sigma\cdot\nabla\Delta\sigma)dxdydz\nonumber\\
  &&+\int(\Delta\partial_z^2\sigma\nabla\sigma\cdot\nabla\Delta\sigma-[\Delta,\sigma]\partial_z^2\sigma\Delta^2\sigma)dxdydz\nonumber\\
  &\leq&-0.5\underline\sigma\int|\nabla\Delta\partial_z\sigma|^2dxdydz+C\int|\nabla\sigma||\nabla\Delta\partial_z\sigma|\nabla\Delta\sigma|dxdydz\nonumber\\
  &&+C\int|[\Delta,\sigma]\partial_z^2\sigma||\Delta^2\sigma|dxdydz.\label{EQ36}
\end{eqnarray}
Substituting \eqref{EQ35} and \eqref{EQ36} into \eqref{920-1} yields
\begin{eqnarray}
  &&\frac12\frac{d}{dt}\|\nabla\Delta\sigma\|_2^2+\epsilon\|\nabla_h\nabla\Delta v\|_2^2+\frac{\nu\underline\sigma}{2}\|\nabla\Delta\partial_z\sigma\|_2^2\nonumber\\
  &\leq& C\int(|\nabla\sigma||\nabla\Delta\sigma||\nabla\Delta\partial_z\sigma|+|[\Delta,\sigma]\partial_z^2\sigma||\Delta^2\sigma|)dxdydz\nonumber\\
  &&+C\left(1+\|\partial_z\sigma\|_{H^2}^\frac32+\|\nabla_h\sigma\|_{H^2}\right)\|\Delta^2\sigma\|_2,
\end{eqnarray}
from which applying \eqref{COME} and using the H\"older, Sobolev, and Young inequalities, one obtains that
\begin{eqnarray*}
  &&\frac12\frac{d}{dt}\|\nabla\Delta\sigma\|_2^2+\epsilon\|\nabla_h\nabla\Delta v\|_2^2+\frac{\nu\underline\sigma}{2}\|\nabla\Delta\partial_z\sigma\|_2^2\nonumber\\
  &\leq&C\|\nabla\sigma\|_\infty\|\nabla\Delta\sigma\|_2\|\nabla\Delta\partial_z\sigma\|_2+C(\|\nabla\sigma\|_\infty\|\partial_z^2\sigma\|_{H^1}+\|\sigma\|_{W^{2,3}}\|\partial_z^2\sigma\|_6)\|\Delta^2\sigma\|_2\nonumber\\
  &&+C(1+\|\nabla\sigma\|_{H^2}^2)\|\Delta^2\sigma\|_2\nonumber\\
  &\leq&C\|\sigma\|_{H^2}^\frac12\|\sigma\|_{H^3}^\frac32\|\nabla\Delta\partial_z\sigma\|_2
  +C(1+\|\sigma\|_{H^3}^2)\|\Delta^2\sigma\|_2\nonumber\\
  &\leq&\frac12\min\left\{\epsilon,\frac{\nu\underline\sigma}{2}\right\}\|\nabla^2\Delta v\|_2^2+C_\epsilon\left(\|\sigma\|_{H^3}^4+1\right).
\end{eqnarray*}
Thus,
\begin{equation}
  \frac{d}{dt}\|\nabla\Delta\sigma\|_2^2+\epsilon\|\nabla_h\nabla\Delta v\|_2^2+\frac{\nu\underline\sigma}{2}\|\nabla\Delta\partial_z\sigma\|_2^2
  \leq C_\epsilon(\|\sigma\|_{H^3}^4+1). \label{EQ37}
\end{equation}
Recalling the boundary conditions \eqref{BC1} and \eqref{BC2}, it follows from the elliptic estimate, the Poincar\'e inequality, and \eqref{ASSUM-APRM} that
$$
\|\sigma\|_{H^3}\leq C(\|\Delta\sigma\|_{H^1}+\|\sigma\|_2)\leq C(\|\nabla\Delta\sigma\|_2+1).
$$
Substituting this into \eqref{EQ37} yields
$$
  \frac{d}{dt}\|\nabla\Delta\sigma\|_2^2+\epsilon\|\nabla_h\nabla\Delta v\|_2^2+\frac{\nu\underline\sigma}{2}\|\nabla\Delta\partial_z\sigma\|_2^2
  \leq C_\epsilon(\|\nabla\Delta\sigma\|_2^4+1)
$$
from which, by the Gr\"onwall inequality and Proposition \ref{PROP-EST-UnderM}, one deduces
\begin{eqnarray*}
  \|\nabla\Delta\sigma\|_2^2(t)+\epsilon\int_0^t\|\nabla_h\nabla\Delta v\|_2^2ds+\frac{\nu\underline\sigma}{2}\int_0^t\|\nabla\Delta\partial_z\sigma\|_2^2ds\\
  \leq e^{C_\epsilon\int_0^t\|\nabla\Delta\sigma\|_2^2ds}(\|\nabla\Delta\sigma_0\|_2^2+C_\epsilon t)\leq e^{e^{C_\epsilon(1+t)}},
\end{eqnarray*}
for some positive constant $C_\epsilon$ depending only on $\epsilon$, $\nu$, $\gamma$, $\mu$, $\lambda$, $\underline\sigma$, $\underline p$,
$\|(v_0,\sigma_0)\|_{H^3}^2+\|p_0\|_{H^3}^2$, and $M$.
With the aid of the above estimate, the conclusion follows from the elliptic estimate and \eqref{ASSUM-APRM}.
\end{proof}

\subsection{$\epsilon$-independent a priori estimates}

\begin{proposition}
  \label{PROP-APR}
Given $(v_{0}, \sigma_{0}, p_{0})$ satisfying (\ref{ASSUMIC1})--(\ref{ASSUMIC2}). Let $X_0$ be an arbitrary constant such that
$\|v_{0}\|_{H^3}^2+\|\sigma_{0}\|_{H^2}^2+\|p_0\|_{H^3}^2\leq X_0$. Then, there is a positive time $\mathcal T_0$,
depending only on $\gamma$, $\nu$, $\mu$, $\lambda$, $\underline\sigma$, $\underline p$, and
  $X_0$, such that system (\ref{EQv})--(\ref{EQp}),
  subject to (\ref{BC1})--(\ref{IC}), has a unique local solution $(v,\sigma,p)$ on $\mathcal O\times(0,\mathcal T_0)$,
  satisfying
  \begin{eqnarray*}
  &&\inf_{(x,y,z,t)\in\mathcal O\times[0,\mathcal T_0]}\sigma\geq0.5\underline\sigma,\quad \inf_{(x,y,t)\in\mathbb T^2\times[0,\mathcal T_0]}p\geq0.5\underline p,\\
  &&\sup_{0\leq t\leq\mathcal T_0}(\|v\|_{H^3}^2+\|\sigma\|_{H^2}^2+\|p\|_{H^3}^2)(t)+\epsilon\int_0^{\mathcal T_0}(\|\nabla_h\sigma\|_{H^2}^2+\|\nabla_hp\|_{H^3}^2)dt\nonumber\\
  &&\, \, ~~~+\int_0^{\mathcal T_0}(\|v\|_{H^4}^2+\|\partial_z\sigma\|_{H^2}^2+\|\partial_tv\|_{H^2}^2+\|\partial_t\sigma\|_{H^1}^2+\|\partial_tp\|_{H^2}^2)dt \leq K_0,
  \end{eqnarray*}
  for a positive constant $K_0$ depending only on $\gamma$, $\nu$, $\mu$, $\lambda$, $\underline\sigma$, $\underline p$, and
  $X_0$.
\end{proposition}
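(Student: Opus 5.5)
The plan is a continuity (bootstrap) argument built on the conditional estimates of Propositions \ref{PROP-LBsp-UnderM}--\ref{PROP-EST-Depep}, combined with the local existence of Proposition \ref{PROPLOC-e}.

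\textbf{Choice of constants.} Let $C_\sharp$ be the constant of Proposition \ref{PROP-EST-UnderM}; it depends only on $\underline\sigma,\nu,\mu$ and, crucially, not on $M$. Set
\[
M:=2e^{C_\sharp(X_0+1)}.
\]
With $M$ fixed, the constants $C$ in Propositions \ref{PROP-LBsp-UnderM}--\ref{PROP-EST-UnderM-Time} depend only on $\gamma,\nu,\mu,\lambda,\underline\sigma,\underline p$ and $X_0$. Choose $\mathcal T_0\le 1$ so small that $e^{C_\sharp(X_0+C\mathcal T_0)}\le e^{C_\sharp(X_0+1)}<M$ and $e^{-C\mathcal T_0}>1/2$.

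\textbf{Continuation.} Let $(v,\sigma,p)$ be the solution from Proposition \ref{PROPLOC-e}, extended to its maximal time $T^*_\epsilon$ in the class stated there. Define $T'$ as the supremum of times $T\le\min\{T^*_\epsilon,\mathcal T_0\}$ on which both \eqref{ASSUM-APRM} and \eqref{ASSUM-LBsp} hold. Note that $\|\sigma_0\|_{H^2}^2\le\|\sigma_0\|_{H^3}^2$, so $X_0$ does control the data.

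Initially $\sup(\ldots)(0)\le X_0<M$, the lower bounds at $t=0$ are $\underline\sigma,\underline p$, and by time continuity in $H^3$ we get $T'>0$. On $(0,T')$ the conditional propositions apply:
\begin{itemize}
\item Proposition \ref{PROP-EST-UnderM} gives the energy quantity $\le e^{C_\sharp(X_0+CT')}<M$, a strict improvement.
\item Proposition \ref{PROP-LBsp-UnderM} gives $\sigma\ge\underline\sigma e^{-CT'}>0.5\underline\sigma$ and likewise $p>0.5\underline p$.
\end{itemize}
By continuity, if $T'<\min\{T^*_\epsilon,\mathcal T_0\}$ both bounds would persist slightly beyond $T'$, a contradiction. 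Hence $T'=\min\{T^*_\epsilon,\mathcal T_0\}$.

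\textbf{Main obstacle: ruling out $T^*_\epsilon\le\mathcal T_0$.} The blow-up criterion from Proposition \ref{PROPLOC-e} requires control of the $H^3$ norm of $\sigma$, while the $\epsilon$-independent bounds only control $\|\sigma\|_{H^2}$. This is why Proposition \ref{PROP-EST-Depep} is needed: on $(0,T^*_\epsilon)$ it gives
\[
\|\sigma\|_{H^3}\le e^{e^{C_\epsilon(1+t)}},
\]
which is finite though $\epsilon$-dependent. Together with the bounds on $v,p$ and the positive lower bounds on $\sigma,p$, the norm $\|(v,\sigma)(t)\|_{H^3}+\|p(t)\|_{H^3}$ stays bounded as $t\to T^*_\epsilon$. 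Since the existence time in Proposition \ref{PROPLOC-e} depends only on $\epsilon$, the lower bounds, and this norm, restarting the local theory near $T^*_\epsilon$ extends the solution past $T^*_\epsilon$, contradicting maximality. The delicate point is precisely this: the restart requires the full $H^3$ regularity of $\sigma$, which only the $\epsilon$-dependent estimate supplies.

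\textbf{Conclusion.} Therefore the solution exists on $[0,\mathcal T_0]$ and satisfies both hypotheses there. Proposition \ref{PROP-EST-UnderM} then bounds the supremum terms, the $\epsilon$-weighted integrals, and $\int_0^{\mathcal T_0}(\|v\|_{H^4}^2+\|\partial_z\sigma\|_{H^2}^2)\,dt$. Restricting to $\epsilon\in(0,1)$, Proposition \ref{PROP-EST-UnderM-Time} bounds the time-derivative integrals by $e^{C(1+\mathcal T_0)}$. Taking $K_0$ as the sum of these bounds gives a constant depending only on $\gamma,\nu,\mu,\lambda,\underline\sigma,\underline p,X_0$. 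Uniqueness is inherited from Proposition \ref{PROPLOC-e}.
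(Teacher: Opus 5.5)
Your proposal is correct and follows essentially the same route as the paper. You fix $M=2e^{C_\sharp(X_0+1)}$, which is legitimate because $C_\sharp$ does not depend on $M$, and run a continuity argument with Propositions \ref{PROP-LBsp-UnderM}--\ref{PROP-EST-UnderM-Time}; you then use the $\epsilon$-dependent $H^3$ bound on $\sigma$ from Proposition \ref{PROP-EST-Depep} to rule out blow-up of the regularized solution before $\mathcal T_0$. The only differences are the order of the two steps (the paper first shows $\mathcal T_\sharp<\mathcal T_{\max}$ and then $\mathcal T_\sharp>\mathcal T_0$) and that you make explicit the restriction $\epsilon\in(0,1)$ required by Proposition \ref{PROP-EST-UnderM-Time}, which the paper leaves implicit.
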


\begin{proof}
Let $(v,\sigma,p)$ be the unique local solution obtained in Proposition \ref{PROPLOC-e}. By iteratively applying Proposition \ref{PROPLOC-e},
one can extend it uniquely to the maximal time of existence $\mathcal T_\text{max}$ which is characterized as
\begin{equation}\label{TE*}
  \varlimsup_{t\rightarrow \mathcal T_\text{max}}\left(\|(v,\sigma)\|_{H^3(\mathcal O)}+\|p\|_{H^3(\mathbb T^2)}+\left\|\frac1\sigma\right\|_\infty+
  \left\|\frac1p\right\|_\infty\right)=\infty,
\end{equation}
in case that $\mathcal T_\text{max}<\infty$. Denote
$$
M_0=2e^{C_\sharp(X_0+1)},
$$
with $C_\sharp$ being the positive constant as in Proposition \ref{PROP-EST-UnderM}, which depends only on $\underline\sigma$, $\nu,$
and $\mu$. Set
\begin{align*}
  \mathcal T_\sharp:=&\max\bigg\{\mathcal T\in[0,\mathcal T_\text{max})\bigg|\sup_{0\leq t\leq\mathcal T}(\|v\|_{H^3}^2+\|\sigma\|_{H^2}^2+\|p\|_{H^3}^2)\leq M_0,\\
  &\inf_{(x,y,z,t)\in\mathcal O\times[0,T]}\sigma\geq0.5\underline\sigma,\quad \mbox{and}\quad\inf_{(x,y,t)\in\mathbb T^2\times[0, T]}p\geq0.5\underline p\bigg\}.
\end{align*}

Note that if $\mathcal T_\sharp=\infty$, then one can simply set $\mathcal T_0=1$ and the conclusion follows directly from Proposition \ref{PROP-EST-UnderM} and Proposition \ref{PROP-EST-UnderM-Time}.
Therefore, in the rest of the proof, we assume that $\mathcal T_\sharp<\infty$.

By Proposition \ref{PROP-EST-Depep}, it holds that
$$
\sup_{0\leq t\leq\mathcal T_\sharp}\|\sigma\|_{H^3}(t)+\int_0^{\mathcal T_\sharp}\|\sigma\|_{H^4}^2(t)dt
\leq e^{e^{C_\epsilon(1+\mathcal T_\sharp)}},
$$
for a positive constant $C_\epsilon$. Hence,
\begin{equation}\label{916}
  \sup_{0\leq t\leq\mathcal T_\sharp}\left(\|(v,\sigma)\|_{H^3}+\|p\|_{H^3}+\left\|\frac1\sigma\right\|_\infty+
  \left\|\frac1p\right\|_\infty\right)\leq M_0+\frac2{\underline\sigma}+\frac2{\underline p}+ e^{e^{C_\epsilon(1+\mathcal T_\sharp)}}.
\end{equation}

We claim that $\mathcal T_\sharp<\mathcal T_\text{max}$. To this end, since $\mathcal T_\sharp<\infty$, it suffices to consider the case that
$\mathcal T_\text{max}<\infty$. Assume by contradiction that $\mathcal T_\sharp\geq\mathcal T_\text{max}$.
It is clear from the definition of $\mathcal T_\sharp$ that $\mathcal T_\sharp\leq\mathcal T_\text{max}$.
Thus, $\mathcal T_\text{max}=\mathcal T_\sharp$. Then, \eqref{916} implies
\begin{equation*}
  \varlimsup_{t\rightarrow\mathcal T_\text{max}}\left(\|(v,\sigma)\|_{H^3}+\|p\|_{H^3}+\left\|\frac1\sigma\right\|_\infty+
  \left\|\frac1p\right\|_\infty\right)(t)<\infty,
\end{equation*}
contradicting to \eqref{TE*}. Therefore, the claim holds.

Recalling the definition of $\mathcal T_\sharp$, it follows from
Proposition \ref{PROP-LBsp-UnderM}, Proposition \ref{PROP-EST-UnderM}, and Proposition \ref{PROP-EST-UnderM-Time} that
\begin{eqnarray}
&&\inf_{(x,y)\in\mathbb T^2}p\geq\underline pe^{-C_0t},\quad \inf_{(x,y,z)\in\mathcal O}\sigma \geq\underline\sigma e^{-C_0t}, \label{LBsp}\\
&&\int_0^t(\|\partial_tv\|_{H^2}^2+\|\partial_t\sigma\|_{H^1}^2+\|\partial_tp\|_{H^2}^2)ds\leq e^{C_0(1+t)},\label{APRESTTIME}
\end{eqnarray}
and
\begin{align}
(\|v\|_{H^3}^2&+\|\sigma\|_{H^2}^2+\|p\|_{H^3}^2)(t)+\epsilon\int_0^t(\|\nabla_h\sigma\|_{H^2}^2+\|\nabla_hp\|_{H^3}^2)ds\nonumber\\
  &+\int_0^t(\|v\|_{H^4}^2+\|\partial_z\sigma\|_{H^2}^2)ds \leq e^{C_\sharp(X_0+C_0t) },\label{Hkvsp}
\end{align}
for any $t\in[0,\mathcal T_\sharp]$,
where constant $C_\sharp\geq1$ depends only on $\underline\sigma, \nu,$ and $\mu,$ while constant $C_0$ depends only on $\gamma$, $\nu$, $\mu$,
$\lambda$, $\underline\sigma$, $\underline p$, and $X_0$.

We claim that
\begin{equation}\label{T0}
\mathcal T_\sharp>\mathcal T_0:=\min\left\{\frac{1}{C_0},-\frac{\ln 0.6}{C_0}\right\}=-\frac{\ln 0.6}{C_0}.
\end{equation}
Assume by contradiction that $\mathcal T_\sharp\leq\mathcal T_0$. Then, for any $t\in[0,\mathcal T_\sharp]$, it follows from \eqref{LBsp} and \eqref{Hkvsp} that
\begin{eqnarray*}
  &&\inf_{(x,y)\in\mathbb T^2}p\geq \underline pe^{-C_0t}\geq \underline pe^{-C_0\mathcal T_\sharp}\geq \underline pe^{-C_0\mathcal T_0}= 0.6\underline p,\\
  &&\inf_{(x,y,z)\in\mathcal O}\sigma\geq \underline \sigma e^{-C_0t}\geq \underline \sigma
  e^{-C_0\mathcal T_\sharp}\geq \underline \sigma e^{-C_0\mathcal T_0}= 0.6\underline\sigma,
\end{eqnarray*}
and
$$
  \|v\|_{H^3}^2 +\|\sigma\|_{H^2}^2+\|p\|_{H^3}^2\leq e^{C_\sharp(X_0+C_0t)}
  \leq e^{C_\sharp(X_0+C_0\mathcal T_0)}\leq e^{C_\sharp(X_0+1)}=\frac{M_0}{2}.
$$
As a result, recalling that $\mathcal T_\sharp<\mathcal T_\text{max}$, and by the regularities of $(v,\sigma,p)$ on $\mathcal O\times[0,\mathcal T_\text{max})$,
there is some $\mathcal T_\sharp^*\in(\mathcal T_\sharp,\mathcal T_\text{max})$, such that
$$
  p\geq 0.5\underline p,\quad \sigma\geq0.5\underline\sigma,\quad
  (\|v\|_{H^3}^2 +\|\sigma\|_{H^2}^2+\|p\|_{H^3}^2)(t)\leq M_0,
$$
for any $t\in[0,\mathcal T_\sharp^*]$. Recalling the definition of $\mathcal T_\sharp$, this implies
$T_\sharp\geq T_\sharp^*$, contradicting to the statement that $\mathcal T_\sharp^*\in(\mathcal T_\sharp,\mathcal T_\text{max})$.
Therefore, the claim holds.

Thanks to \eqref{T0}, the conclusion follows from \eqref{LBsp}, \eqref{APRESTTIME}, and \eqref{Hkvsp}.
\end{proof}

\section{Proof of the main result}
\label{sec4}

In this section, we give the proof of our main result, Theorem \ref{thmmain}.

\begin{proof}
We first proof the existence.
Denote
$$
X_0:=2(\|v_{0}\|_{H^3}^2+\|\sigma_{0}\|_{H^2}^2+\|p_0\|_{H^3}^2).
$$
Choose $(v_{0,\epsilon}, \sigma_{0,\epsilon}, p_{0,\epsilon})$ such that
\begin{eqnarray*}
  (v_{0,\epsilon}, \sigma_{0,\epsilon})\in H^3(\mathcal O),\quad p_{0,\epsilon}\in H^3(\mathbb T^3),\\ \partial_zv_{0,\epsilon}|_{z=0,1}=0,\quad \partial_z\sigma_{0,\epsilon}|_{z=0,1}=0,\quad
  \sigma_{0,\epsilon}\geq\underline\sigma, \quad p_{0,\epsilon}\geq\underline p,
\end{eqnarray*}
and
$$
  v_{0,\epsilon}\rightarrow v_0,\quad\mbox{in }H^3(\mathcal O);\quad
  \sigma_{0,\epsilon}\rightarrow\sigma_0,\quad\mbox{in }H^2(\mathcal O);\quad
  p_{0,\epsilon}\rightarrow p_0,\quad\mbox{in }H^3(\mathbb T^2).
$$
Then, there is a positive number $\epsilon_0$, such that
$$
\|v_{0,\epsilon}\|_{H^3}^2+\|\sigma_{0,\epsilon}\|_{H^2}^2+\|p_{0,\epsilon}\|_{H^3}^2\leq X_0,
$$
for any $\epsilon\in(0,\epsilon_0)$.

By Proposition \ref{PROP-APR}, there is a positive time $\mathcal T_0$ depending only on $\gamma,$ $\nu,$ $\mu$, $\lambda$,
$\underline\sigma$, $\underline p$, and $X_0$, such that for any $\epsilon\in(0,\epsilon_0)$, system \eqref{EQv}--\eqref{EQp}, subject to \eqref{BC1}--\eqref{IC}, has a unique solution $(v_\epsilon,
\sigma_\epsilon, p_\epsilon)$ on $\mathcal O\times(0,\mathcal T_0)$,
with initial data $(v_{0,\epsilon}, \sigma_{0,\epsilon}, p_{0,\epsilon})$, satisfying
 \begin{eqnarray}
  &&\inf_{(x,y,z,t)\in\mathcal O\times[0,\mathcal T_0]}\sigma_\epsilon\geq0.5\underline\sigma,\quad \inf_{(x,y,t)\in\mathbb T^2\times[0,\mathcal T_0]}p_\epsilon\geq0.5\underline p,\label{APRLBsp}\\
  &&\sup_{0\leq t\leq\mathcal T_0}(\|v_\epsilon\|_{H^3}^2+\|\sigma_\epsilon\|_{H^2}^2+\|p_\epsilon\|_{H^3}^2)(t)
  +\epsilon\int_0^{\mathcal T_0}(\|\nabla_h\sigma_\epsilon\|_{H^2}^2+\|\nabla_hp_\epsilon\|_{H^3}^2)dt\nonumber\\
  &&\, \, ~~~+\int_0^{\mathcal T_0}(\|v_\epsilon\|_{H^4}^2+\|\partial_z\sigma_\epsilon\|_{H^2}^2+\|\partial_tv_\epsilon\|_{H^2}^2+\|\partial_t\sigma_\epsilon
  \|_{H^1}^2+\|\partial_tp_\epsilon\|_{H^2}^2)dt \leq K_0,\label{APRHk}
  \end{eqnarray}
  for a positive constant $K_0$ depending only on $\gamma$, $\nu$, $\mu$, $\lambda$, $\underline\sigma$, $\underline p$, and
  $X_0$.

  Thanks to \eqref{APRHk}, by the Banach-Alaoglu theorem, and using Cantor's diagonal argument,
  there is a subsequence $(v_{\epsilon_n}, \sigma_{\epsilon_n}, p_{\epsilon_n})$ and a triple $(v,\sigma,p)$,
  such that
  \begin{eqnarray}
    &&v_{\epsilon_n}\stackrel{\ast}{\rightharpoonup}v,\quad\mbox{in }L^\infty(0,\mathcal T_0; H^3(\mathcal O)),\label{WeaC1}\\
    &&\sigma_{\epsilon_n}\stackrel{\ast}{\rightharpoonup}\sigma,\quad\mbox{in }L^\infty(0,\mathcal T_0; H^2(\mathcal O)),\label{WeaC2}\\
    &&p_{\epsilon_n}\stackrel{\ast}{\rightharpoonup}p,\quad\mbox{in }L^\infty(0,\mathcal T_0; H^3(\mathbb T^2)),\label{WeaC3}\\
    &&v_{\epsilon_n}{\rightharpoonup}v,\quad\mbox{in }L^2(0,\mathcal T_0; H^4(\mathcal O)),\label{WeaC4}\\
    &&\partial_z\sigma_{\epsilon_n} {\rightharpoonup}\partial_z\sigma,\quad\mbox{in }L^2(0,\mathcal T_0; H^2(\mathcal O)),\label{WeaC5}\\
    &&\partial_tv_{\epsilon_n} {\rightharpoonup}\partial_tv,\quad\mbox{in }L^2(0,\mathcal T_0; H^2(\mathcal O)),\label{WeaC6}\\
    &&\partial_t\sigma_{\epsilon_n} {\rightharpoonup}\partial_t\sigma,\quad\mbox{in }L^2(0,\mathcal T_0; H^1(\mathcal O)),\label{WeaC7}\\
    &&\partial_tp_{\epsilon_n} {\rightharpoonup}\partial_tp,\quad\mbox{in }L^2(0,\mathcal T_0; H^2(\mathbb T^2)),\label{WeaC8}
  \end{eqnarray}
  where $\stackrel{\ast}{\rightharpoonup}$ and $\rightharpoonup$ represent the weak-* and weak convergence, respectively, in the corresponding spaces. With the aid of the above convergence, noticing that $H^3(\mathcal O)\hookrightarrow\hookrightarrow H^2(\mathcal O)$, $H^2(\mathcal O)\hookrightarrow\hookrightarrow H^1(\mathcal O)\cap C(\overline{\mathcal O})$, and $H^3(\mathbb T^2)\hookrightarrow\hookrightarrow H^2(\mathbb T^2)\cap C(\mathbb T^2)$,
  it follows from the Aubin-Lions lemma that
    \begin{eqnarray}
    &&v_{\epsilon_n} \rightarrow v,\quad\mbox{in }C([0,\mathcal T_0]; H^2(\mathcal O)),\label{StrC1}\\
    &&\sigma_{\epsilon_n} \rightarrow\sigma,\quad\mbox{in }C([0,\mathcal T_0]; H^1(\mathcal O)\cap C(\overline{\mathcal O})),\label{StrC2}\\
    &&\partial_z\sigma_{\epsilon_n} \rightarrow\partial_z\sigma,\quad\mbox{in }L^2(0,\mathcal T_0; H^1(\mathcal O)),\\
    &&p_{\epsilon_n} \rightarrow p,\quad\mbox{in }C([0,\mathcal T_0]; H^2(\mathbb T^2)\cap C(\mathbb T^2)).\label{StrC3}
  \end{eqnarray}
  These imply that $(v,\sigma,p)$ has initial data $(v_0,\sigma_0,p_0)$, enjoys all regularities stated in Theorem \ref{thmmain}, and fulfills the boundary conditions \eqref{BC1} and \eqref{BC2}.
  Besides, by \eqref{StrC2} and \eqref{StrC3}, it follows from \eqref{APRLBsp} that
  \begin{equation*}
    \inf_{(x,y,z,t)\in\mathcal O\times[0,\mathcal T_0]}\sigma \geq0.5\underline\sigma,\quad \inf_{(x,y,t)\in\mathbb T^2\times[0,\mathcal T_0]}p \geq0.5\underline p.
  \end{equation*}
  By \eqref{WeaC1}--\eqref{StrC3}, one can take the limit as $n\rightarrow\infty$ to \eqref{EQv}--\eqref{EQp} to show that $(v,\sigma,p)$ satisfies system \eqref{EQv0}--\eqref{EQp0} in the sense of distribution and further  pointwisely, by the regularities of $(v,\sigma,p)$. Therefore, $(v,\sigma,p)$ is a strong solution
  to system \eqref{EQv}--\eqref{EQp}, subject to \eqref{BC1}--\eqref{IC}, on $\mathcal O\times(0,\mathcal T_0)$.

  Now, we prove the continuous dependence on the initial data which in particular implies the uniqueness. Let $(v_i, \sigma_i, p_i)$, $i=1,2,$
  be two solutions to system \eqref{EQv0}--\eqref{EQs0}, subject to \eqref{BC1}--\eqref{BC2}, on $\mathcal O\times(0,\mathcal T)$ for some positive time $\mathcal T$, satisfying
  \begin{equation}
    \inf_{(x,y,z,t)\in\mathcal O\times[0,\mathcal T]}\sigma_i\geq0.5\underline\sigma,\quad\inf_{(x,y,t)\in\mathbb T^2\times[0,\mathcal T]}p_i\geq 0.5\underline p,\label{LBspi}
  \end{equation}
  for two positive numbers $\underline\sigma$ and $\underline p$. Let $Q(\nabla v_i)$, $\phi(v_i, p_i)$, and $w_i$, respectively, be given by \eqref{Q}, \eqref{phi}, and \eqref{EQw0}, corresponding to $(v_i, p_i)$, $i=1,2$. Denote
  $$
  (v,w,\sigma,p)=(v_1-v_2, w_1-w_2, \sigma_1-\sigma_2, p_1-p_2)
  $$
  and set
  \begin{equation}
  \label{M}
  M=\sup_{0\leq t\leq\mathcal T}\left(\|(v_1, v_2)\|_{H^3}^2+\|(\sigma_1,\sigma_2)\|_{H^2}^2+\|(p_1,p_2)\|_{H^3}^2\right)(t).
  \end{equation}
  Then, by the regularities of $(v_i, \sigma_i, p_i)$, $i=1,2,$ $M$ is a finite number.

  In the same way as \eqref{H2phi} and \eqref{H2w}, one has
  \begin{eqnarray}
    \|\phi(v_i,p_i)\|_{H^2}(t)\leq C_M,\quad \|w_i\|_{H^2}(t)\leq C_M(1+\|\partial_z\sigma_i\|_{H^2}(t)),
\label{H2wphii}
  \end{eqnarray}
  for any $t\in[0,\mathcal T]$, where $C_M$ is a positive constant depending on $M$.
  By direct calculations, it holds that
  \begin{eqnarray*}
    &&\phi(v_1,p_1)-\phi(v_2,p_2)\\
    &=&\text{div}_h\tilde v+\frac{1}{\gamma p_1}\left[\tilde v_1\cdot\nabla_hp
    +\tilde v\cdot\nabla_hp_2-(\gamma-1)\left(\widetilde{Q(\nabla v_1)}-\widetilde{Q(\nabla v_2)}\right)\right]\\
    &&+\frac{p}{\gamma p_1p_2}\left((\gamma-1)\widetilde{Q(\nabla v_2)}-\tilde v_2\cdot\nabla_hp_2\right)
  \end{eqnarray*}
  and thus by the Sobolev inequality and \eqref{M} one deduces
  \begin{eqnarray*}
    &&\|\phi(v_1,p_1)-\phi(v_2,p_2)\|_2\\
    &\leq& C(\|\nabla_hv\|_2+\|v_1\|_\infty\|\nabla_hp\|_2+\|v\|_2\|\nabla_hp_2\|_\infty+\|(\nabla v_1,\nabla v_2)\|_\infty\|\nabla v\|_2)\\
    &&+C\|p\|_2(\|\nabla v_2\|_\infty^2+\|v_2\|_\infty\|\nabla_hp_2\|_\infty)\\
    &\leq& C(\|\nabla_hv\|_2+\|v_1\|_{H^2}\|\nabla_hp\|_2+\|v\|_2\|p_2\|_{H^3}+\|(v_1,v_2)\|_{H^3}\|\nabla v\|_2)\\
    &&+C\|p\|_2(\|v_2\|_{H^3}^2+\|v_2\|_{H^2}\|p_2\|_{H^3})\\
    &\leq &C_M(\|v\|_{H^1}+\|p\|_{H^1}),
  \end{eqnarray*}
  that is
  \begin{equation}
    \|\phi(v_1,p_1)-\phi(v_2,p_2)\|_2\leq C_M(\|v\|_{H^1}+\|p\|_{H^1}). \label{L2Dffphi}
  \end{equation}
  Thanks to this, it follows that
  \begin{eqnarray}
    \|w\|_2&=&\|w_1-w_2\|_2=\left\|\nu\partial_z\sigma-\int_0^z(\phi(v_1,p_1)-\phi(v_2,p_2))dz'\right\|_2\nonumber\\
    &\leq&\nu\|\partial_z\sigma\|_2+C_M(\|v\|_{H^1}+\|p\|_{H^1}).\label{L2Diffw}
  \end{eqnarray}

  Direct calculations yield
  \begin{align*}
    \partial_tv+(v_1\cdot\nabla_h)v+w_1\partial_zv+(v\cdot\nabla_h)v_2+w\partial_zv_2+\sigma_1\nabla_hp+\sigma\nabla_hp_2\\
    =\sigma_1(\mu\Delta v+(\mu+\lambda)\nabla_h\text{div}_hv)+\sigma(\mu\Delta v_2+(\mu+\lambda)\nabla_h\text{div}_hv_2).
  \end{align*}
  Multiplying this with $v-\Delta v$, integrating over $\mathcal O$, and noticing that
  \begin{eqnarray*}
    -\int\sigma_1\nabla_h\text{div}_hv\cdot\Delta vdxdydz
    =\int\text{div}_hv(\nabla_h\sigma_1\cdot\Delta v+\sigma_1\Delta\text{div}_hv)dxdydz\\
    =-\int\sigma_1|\nabla\text{div}_hv|^2dxdydz+\int\text{div}_hv(\nabla_h\sigma_1\cdot\Delta v-\nabla\sigma_1\cdot\nabla\text{div}_hv)
    dxdydz,
  \end{eqnarray*}
  one obtains
  \begin{eqnarray}
    &&\frac12\frac{d}{dt}\|v\|_{H^1}^2+\int\sigma_1(\mu|\Delta v|^2+(\mu+\lambda)|\nabla\text{div}_hv|^2)dxdydz\nonumber\\
    &=&\int((v_1\cdot\nabla_h)v+w_1\partial_zv+(v\cdot\nabla_h)v_2+w\partial_zv_2)\cdot(\Delta v-v)dxdydz\nonumber\\
    &&+\int(\sigma_1\nabla_hp+\sigma\nabla_hp_2)\cdot(\Delta v-v)dxdydz\nonumber\\
    &&+\int\sigma_1(\mu\Delta v+(\mu+\lambda)\nabla_h\text{div}_hv)\cdot vdxdydz\nonumber\\
    &&+\int\sigma(\mu\Delta v_2+(\mu+\lambda)\nabla_h\text{div}_hv_2)\cdot(v-\Delta v)dxdydz\nonumber\\
    &&+(\mu+\lambda)\int\text{div}_hv(\nabla_h\sigma_1\cdot\Delta v-\nabla\sigma_1\cdot\nabla\text{div}_hv)dxdydz\nonumber \\
    &=:&I_1+I_2+I_3+I_4+I_5. \label{922-5}
  \end{eqnarray}
  Estimates on $I_i, i=1,2,\cdots,5$, are carried out as follows. For $I_1$, it follows from the H\"older, Sobolev, and Young inequalities, \eqref{M}, and
  \eqref{H2wphii} that
  \begin{eqnarray*}
    I_1&\leq&(\|(v_1,w_1)\|_\infty\|\nabla v\|_2+\|(v,w)\|_2\|\nabla v_2\|_\infty)\|(v,\Delta v)\|_2\\
    &\leq&C[\|(v_1,w_1)\|_{H^2}\|\nabla v\|_2+\|v_2\|_{H^3}(\|v\|_{H^1}+\|p\|_{H^1}+\|\partial_z\sigma\|_2)]\|(v,\Delta v)\|_2\\
    &\leq&C_M[(1+\|\partial_z\sigma_1\|_{H^2})\|v\|_{H^1}+\|p\|_{H^1}+\|\partial_z\sigma\|_2]\|(v,\Delta v)\|_2\\
    &\leq&\frac\eta5\|\Delta v\|_2^2+C_{\eta,M}[(1+\|\partial_z\sigma_1\|_{H^2}^2)\|v\|_{H^1}^2+\|p\|_{H^1}^2+\|\partial_z\sigma\|_2^2],
  \end{eqnarray*}
  for any $\eta>0$. For $I_2, I_3, I_4,$ and $I_5$, it follows from the H\"older, Sobolev, and Young inequalities and \eqref{M} that
  \begin{eqnarray*}
    I_2&\leq&(\|\sigma_1\|_\infty\|\nabla_hp\|_2+\|\sigma\|_2\|\nabla_hp_2\|_\infty)\|(v,\Delta v)\|_2\\
    &\leq&C(\|\sigma_1\|_{H^2}+\|p_2\|_{H^3})(\|\nabla_hp\|_2+\|\sigma\|_2)\|(v,\Delta v)\|_2\\
    &\leq&\frac\eta5\|\Delta v\|_2^2+C_{\eta,M}(\|v\|_2^2+\|\sigma\|_2^2+\|\nabla_hp\|_2^2),\\
    I_3&\leq&C\|\sigma_1\|_\infty\|\nabla^2v\|_2\|v\|_2\leq C\|\sigma_1\|_{H^2}\|\nabla^2v\|_2\|v\|_2
     \leq \frac\eta5\|\nabla^2v\|_2^2+C_{\eta,M}\|v\|_2^2,\\
    I_4&\leq& C\|\sigma\|_2\|\nabla^2v_2\|_\infty\|(v,\Delta v)\|_2
    \leq C\|v_2\|_{H^4}\|\sigma\|_2\|(v,\Delta v)\|_2\\
    &\leq&\frac\eta5\|\Delta v\|_2^2+C_{\eta,M}(1+\|v_2\|_{H^4}^2)(\|v\|_2^2+\|\sigma\|_2^2),\\
    I_5&\leq&C\|\nabla_hv\|_3\|\nabla\sigma_1\|_6\|\nabla^2v\|_2\leq C\|\nabla_hv\|_2^\frac12\|\nabla_h\nabla v\|_2^\frac12\|\sigma_1\|_{H^2}\|\nabla^2v\|_2\\
    &\leq&\frac\eta5\|\nabla^2v\|_2^2+C_{\eta,M}\|\nabla v\|_2^2,
  \end{eqnarray*}
  for any $\eta>0$. Substituting the estimate $I_i, i=1,2,\cdots,5$, into \eqref{922-5} and recalling \eqref{LBspi}, one gets
  \begin{eqnarray}
    &&\frac12\frac{d}{dt}\|v\|_{H^1}^2+0.5\mu\underline\sigma\|\Delta v\|_2^2\nonumber\\
    &\leq& C_{\eta,M}(1+\|\partial_z\sigma_1\|_{H^2}^2+\|v_2\|_{H^4}^2)(\|v\|_{H^1}^2+\|\sigma\|_2^2+\|p\|_{H^1}^2)\nonumber\\
    &&+\eta\|\nabla^2v\|_2^2+C_{\eta,M}\|\partial_z\sigma\|_2^2,\label{922-6}
  \end{eqnarray}
  for any $\eta>0$. By the elliptic estimate, it holds that
  $$
  \|\nabla^2v\|_2\leq C(\|\Delta v\|_2+\|v\|_2).
  $$
  Substituting this into \eqref{922-6} and choosing $\eta$ sufficiently small lead to
  \begin{eqnarray}
    && \frac{d}{dt}\|v\|_{H^1}^2+0.5\mu\underline\sigma\|\Delta v\|_2^2\nonumber\\
    &\leq& C_M^*\|\partial_z\sigma\|_2^2+
    C_{M}^*(1+\|\partial_z\sigma_1\|_{H^2}^2+\|v_2\|_{H^4}^2)(\|v\|_{H^1}^2+\|\sigma\|_2^2+\|p\|_{H^1}^2),\label{922-7}
  \end{eqnarray}
  for a positive constant $C_M^*$ depending on $M$.

  By direct calculations, one has
  \begin{align*}
    \partial_t\sigma+&v_1\cdot\nabla_h\sigma+w_1\partial_z\sigma+v\cdot\nabla_h\sigma_2+w\partial_z\sigma_2+\sigma_1(\phi(v_1,p_1)-\phi(v_2,p_2)
    -\text{div}_hv)\\
    &+\sigma(\phi(v_2,p_2)-\text{div}_h v_2)=\nu\sigma_1\partial_z^2\sigma+\nu\sigma\partial_z^2\sigma_2.
  \end{align*}
  Multiplying this with $\sigma$, integrating over $\mathcal O$, and noticing that
  \begin{eqnarray*}
    &-\int v_1\cdot\nabla_h\sigma\sigma dxdydz=\frac12\int\nabla_h\cdot v_1\sigma^2dxdydz,\\
    &-\int\sigma_1\partial_z^2\sigma\sigma dxdydz=\int\sigma_1|\partial_z\sigma|^2dxdydz+\int\partial_z\sigma_1\partial_z\sigma\sigma dxdydz,\\
    &\int\partial_z^2\sigma_2\sigma^2dxdydz=-2\int\partial_z\sigma_2\sigma\partial_z\sigma dxdydz,
  \end{eqnarray*}
  one gets
  \begin{eqnarray*}
    &&\frac12\frac{d}{dt}\|\sigma\|_2^2+\nu\int\sigma_1|\partial_z\sigma|^2dxdydz\\
    &=&\frac12\int\nabla_h\cdot v_1\sigma^2dxdydz-\int(w_1\partial_z\sigma+v\cdot\nabla_h\sigma_2+w\partial_z\sigma_2)\sigma dxdydz\\
    &&-\int\sigma_1(\phi(v_1,p_1)-\phi(v_2,p_2)-\text{div}_hv)\sigma dxdydz\\
    &&-\int(\phi(v_2,p_2)-\text{div}_hv_2)\sigma^2dxdydz\\
    &&-\nu\int(2\partial_z\sigma_2\sigma\partial_z\sigma+\partial_z\sigma_1\partial_z\sigma\sigma)dxdydz.
  \end{eqnarray*}
  Thanks to this, recalling \eqref{LBspi}, and using \eqref{H2wphii}, \eqref{L2Dffphi}, and \eqref{L2Diffw}, one deduces by the H\"older,
  Sobolev, and Young inequalities and \eqref{M} that
  \begin{eqnarray*}
     &&\frac12\frac{d}{dt}\|\sigma\|_2^2+0.5\nu\underline\sigma\|\partial_z\sigma\|_2^2\\
     &\leq&C[\|\nabla_hv_1\|_\infty\|\sigma\|_2^2+\|w_1\|_\infty\|\partial_z\sigma\|_2\|\sigma\|_2+\|v\|_6\|\nabla_h\sigma_2\|_3\|\sigma\|_2
     +\|w\|_2\|\partial_z\sigma_2\|_\infty\|\sigma\|_2\\
     &&+\|\sigma_1\|_\infty\|\phi(v_1,p_1)-\phi(v_2,p_2)\|_2\|\sigma\|_2+\|\sigma_1\|_\infty\|\text{div}_hv\|_2\|\sigma\|_2
     \\
     &&+(\|\phi(v_2,p_2)\|_\infty+\|\nabla_hv_2\|_\infty)\|\sigma\|_2^2+(\|\partial_z\sigma_2\|_\infty+\|\partial_z\sigma_1\|_\infty)
     \|\partial_z\sigma\|_2\|\sigma\|_2]\\
     &\leq&C[\|v_1\|_{H^3}\|\sigma\|_2^2+\|w_1\|_{H^2}\|\partial_z\sigma\|_2\|\sigma\|_2+\|v\|_{H^1}\|\sigma_2\|_{H^2}\|\sigma\|_2\\
     &&+(\|\partial_z\sigma\|_2+\|v\|_{H^1}+\|p\|_{H^1})\|\partial_z\sigma_2\|_{H^2}\|\sigma\|_2+\|\sigma_1\|_{H^2}
     (\|v\|_{H^1}+\|p\|_{H^1})\|\sigma\|_2\\
     &&+\|\sigma_1\|_{H^2}\|v\|_{H^1}\|\sigma\|_2+(\|\phi(v_2,p_2)\|_{H^2}+\|v_2\|_{H^3})\|\sigma\|_2^2\\
     &&+\|(\partial_z\sigma_1,\partial_z\sigma_2)\|_{H^2}\|\partial_z\sigma\|_2\|\sigma\|_2]\\
     &\leq&C_M[\|\sigma\|_2^2+(1+\|\partial_z\sigma_1\|_{H^2})\|\partial_z\sigma\|_2\|\sigma\|_2+\|v\|_{H^1}\|\sigma\|_2\\
     &&+\|\partial_z\sigma_2\|_{H^2}\|\sigma\|_2(\|\partial_z\sigma\|_2+\|v\|_{H^1}+\|p\|_{H^1})+(\|v\|_{H^1}+\|p\|_{H^1})\|\sigma\|_2]\\
     &\leq&0.25\nu\underline\sigma\|\partial_z\sigma\|_2^2+C_{M}(1+\|(\partial_z\sigma_1,\partial_z\sigma_2)\|_{H^2}^2)
     (\|v\|_{H^1}^2+\|\sigma\|_2^2+\|p\|_{H^1}^2)
  \end{eqnarray*}
  and thus
  \begin{eqnarray}
     &&\frac{d}{dt}\|\sigma\|_2^2+0.5\nu\underline\sigma\|\partial_z\sigma\|_2^2\nonumber\\
     &\leq&C_{M}(1+\|(\partial_z\sigma_1,\partial_z\sigma_2)\|_{H^2}^2)
     (\|v\|_{H^1}^2+\|\sigma\|_2^2+\|p\|_{H^1}^2). \label{922-8}
  \end{eqnarray}

  Direct calculations yield
  \begin{align*}
    \partial_tp+\bar v_1\cdot\nabla_hp+\bar v\cdot\nabla_hp_2+\gamma p_1\text{div}_h\bar v+\gamma p\text{div}_h\bar v_2
    =(\gamma-1)\left(\overline{Q(\nabla v_1)}-\overline{Q(\nabla v_2)}\right).
  \end{align*}
  Multiplying this with $p-\Delta_hp$ and integrating over $\mathbb T^2$ yield
  \begin{eqnarray}
    \frac12\frac{d}{dt}\|p\|_{H^1}^2&=&\int\bar v_1\cdot\nabla_hp\Delta_hpdxdydz\nonumber\\
    &&+\int(\bar v\cdot\nabla_hp_2+\gamma p_1\text{div}_h\bar v+\gamma p\text{div}_h\bar v_2)\Delta_hpdxdydz\nonumber\\
    &&-(\gamma-1)\int \left(\overline{Q(\nabla v_1)}-\overline{Q(\nabla v_2)}\right)\Delta_hp dxdydz\nonumber\\
    &&-\int(\bar v_1\cdot\nabla_hp+\bar v\cdot\nabla_hp_2+\gamma p_1\text{div}_h\bar v+\gamma p\text{div}_h\bar v_2)pdxdydz\nonumber\\
    &&+(\gamma-1)\int \left(\overline{Q(\nabla v_1)}-\overline{Q(\nabla v_2)}\right)pdxdydz\nonumber\\
    &=:&II_1+II_2+II_3+II_4+II_5. \label{AD1}
  \end{eqnarray}
  Estimates on $II_i$, $i=1,2,\cdots,5$, are carried out as follows. It follows from integrating by parts, the H\"older, Sobolev, and Young inequalities and \eqref{M} that
  \begin{eqnarray*}
    II_1&=&\int\bar v_1\cdot\nabla_hp\Delta_hpdxdydz\nonumber\\
    &=&-\int[\nabla_hp\cdot\nabla_h\bar v_1\cdot\nabla_hp+(\bar v_1\cdot\nabla_h)\nabla_hp
    \cdot\nabla_hp]dxdydz\\
    &=&\frac12\int\text{div}_h\bar v_1|\nabla_hp|^2dxdydz-\int\nabla_hp\cdot\nabla_h\bar v_1\cdot\nabla_hpdxdydz \\
    &\leq&C\|\nabla v_1\|_\infty\|\nabla_hp\|_2^2\leq C\|v_1\|_{H^3}\|\nabla_hp\|_2^2\leq C_M\|p\|_{H^1}^2, \\
    II_2&=&-\int\nabla_h(\bar v\cdot\nabla_hp_2+\gamma p_1\text{div}_h\bar v+\gamma p\text{div}_h\bar v_2)\cdot\nabla_hpdxdydz\\
    &\leq&\int(|\nabla_h\bar v||\nabla_hp_2|+|\bar v||\nabla_h^2p_2|+\gamma|\nabla_hp_1||\text{div}_h\bar v|+\gamma|p_1|
    |\nabla_h\text{div}_h\bar v|\\
    &&+\gamma|\nabla_hp||\text{div}_h\bar v_2|+\gamma|p||\nabla_h\text{div}_h\bar v_2|)|\nabla_hp|dxdydz\\
    &\leq&C(\|(\nabla_hp_1,\nabla_hp_2)\|_\infty\|\nabla_hv\|_2+\|v\|_\infty\|\nabla_h^2p_2\|_2+\|p_1\|_\infty\|\nabla_h^2v\|_2\\
    &&+\|\nabla_hv_2\|_\infty\|\nabla_hp\|_2+\|p\|_2\|\nabla_h^2v_2\|_\infty)\|\nabla_hp\|_2\\
    &\leq&C(\|(p_1,p_2)\|_{H^3}\|\nabla_hv\|_2+\|v\|_{H^2}\|p_2\|_{H^2}+\|p_1\|_{H^2}\|v\|_{H^2}\\
    &&+\|v_2\|_{H^3}\|\nabla_hp\|_2+\|v_2\|_{H^4}\|p\|_2)\|\nabla_hp\|_2\\
    &\leq&\frac\eta4\|v\|_{H^2}^2+C_{\eta,M}(1+\|v_2\|_{H^4}^2)(\|v\|_{H^1}^2+\|p\|_{H^1}^2),\\
    II_3&=&(\gamma-1)\int \nabla_h\left(\overline{Q(\nabla v_1)}-\overline{Q(\nabla v_2)}\right)\cdot\nabla_hp dxdydz\\
    &\leq&C\int(|\nabla(v_1+v_2)||\nabla^2v|+|\nabla^2(v_1+v_2)||\nabla v|)|\nabla_hp|dxdydz\\
    &\leq&C(\|\nabla(v_1+v_2)\|_\infty\|\nabla^2v\|_2+\|\nabla^2(v_1+v_2)\|_\infty\|\nabla v\|_2)\|\nabla_hp\|_2\\
    &\leq&C(\|v_1+v_2\|_{H^3}\|\nabla^2v\|_2+\|v_1+v_2\|_{H^4}\|\nabla v\|_2)\|\nabla_hp\|_2\\
    &\leq&\frac\eta4\|\nabla^2v\|_2^2+C_\eta\|v_1+v_2\|_{H^3}^2\|\nabla_hp\|_2^2+\|v_1+v_2\|_{H^4}(\|v\|_{H^1}^2+\|p\|_{H^1}^2)\\
    &\leq&\frac\eta4\|\nabla^2v\|_2^2+C_{\eta,M}\left(1+\|(v_1,v_2)\|_{H^4}^2\right)(\|v\|_{H^1}^2+\|p\|_{H^1}^2),
  \end{eqnarray*}
  for any $\eta>0$.
  By the H\"older, Sobolev, and Young inequalities and using \eqref{M}, one deduces
  \begin{eqnarray*}
    II_4
    &\leq&C(\|\bar v_1\|_\infty\|\nabla_hp\|_2+\|\bar v\|_2\|\nabla_hp_2\|_\infty+\|p_1\|_\infty\|\text{div}_h\bar v\|_2+
    \|p\|_2\|\text{div}_h\bar v_2\|_\infty)\|p\|_2\\
    &\leq&C(\|v_1\|_{H^2}\|\nabla_hp\|_2+\|v\|_2\|p_2\|_{H^3}+\|p_1\|_{H^2}\|\nabla_hv\|_2+\|p\|_2\|v_2\|_{H^3})\|p\|_2\\
    &\leq&C_M(\|p\|_{H^1}^2+\|v\|_{H^1}^2)
  \end{eqnarray*}
  and
  \begin{eqnarray*}
    II_5&\leq&C\int|\nabla(v_1+v_2)||\nabla v||p|dxdydz\leq C\|\nabla(v_1+v_2)\|_\infty\|\nabla v\|_2\|p\|_2\\
    &\leq&C\|(v_1,v_2)\|_{H^3}\|v\|_{H^1}\|p\|_2\leq C_M(\|p\|_2^2+\|v\|_{H^1}^2).
  \end{eqnarray*}
  Substituting the estimates for $II_i, i=1,2,\cdots,5$, into \eqref{AD1} and by the elliptic estimate, one gets
    \begin{eqnarray}
    \frac{d}{dt}\|p\|_{H^1}^2
    &\leq&\eta\|v\|_{H^2}^2+C_{\eta,M}\left(1+\|(v_1,v_2)\|_{H^4}^2\right)(\|p\|_{H^1}^2+\|v\|_{H^1}^2)\nonumber\\
    &\leq&\eta C_*(\|\Delta v\|_2^2+\|v\|_2^2)+C_{\eta,M}\left(1+\|(v_1,v_2)\|_{H^4}^2\right)(\|p\|_{H^1}^2+\|v\|_{H^1}^2)\nonumber\\
    &\leq&\eta C_*\|\Delta v\|_2^2+C_{\eta,M}\left(1+\|(v_1,v_2)\|_{H^4}^2\right)(\|p\|_{H^1}^2+\|v\|_{H^1}^2),\label{922-9}
  \end{eqnarray}
  for any $\eta>0$ and for a positive constant $C_*$.

  Let $C_M^*$ and $C_*$ be the constants in \eqref{922-7} and \eqref{922-9}, respectively, and set
  $$
  \delta_*=\frac{\nu\underline\sigma}{4 C_M^*},\quad \eta_*=\frac{\mu\nu\underline\sigma^2}{16C_M^*C_*}.
  $$
  Multiplying \eqref{922-7} with $\delta_*$, summing the resultant with \eqref{922-8} and \eqref{922-9}, and choosing $\eta=\eta_*$, then one obtains
  \begin{eqnarray*}
    &&\frac{d}{dt}(\|\delta_*\|v\|_{H^1}^2+\|\sigma\|_2^2+\|p\|_{H^1}^2)+\frac{\mu\nu\underline\sigma^2}{16C_M^*}\|\Delta v\|_2^2
    +0.25\nu\underline\sigma\|\partial_z\sigma\|_2^2\\
    &\leq& C_M(1+\|(\partial_z\sigma_1,\partial_z\sigma_2)\|_{H^2}^2+\|(v_1,v_2)\|_{H^4}^2)(\|v\|_{H^1}^2+\|\sigma\|_2^2+\|p\|_{H^1}^2),
  \end{eqnarray*}
  from which, by the Gr\"onwall inequality, one gets
  \begin{eqnarray*}
    &&\left(\|v\|_{H^1}^2+\|\sigma\|_2^2+\|p\|_{H^1}^2\right)(t)+\int_0^t(\|\Delta v\|_2^2+\|\partial_z\sigma\|_2^2)ds\\
    &\leq& C\exp\left\{C_M\int_0^t(1+\|(\partial_z\sigma_1,\partial_z\sigma_2)\|_{H^2}^2+\|(v_1,v_2)\|_{H^4}^2)ds\right\}\\
    &&\times(\|v_0\|_{H^1}^2
    +\|\sigma_0\|_2^2+\|p_0\|_{H^1}^2),
  \end{eqnarray*}
  which proves the continuous dependence on the initial data.
\end{proof}

\section{Appendix A: proof of \eqref{CAL} and \eqref{COME}}
\begin{proof}[\textbf{Proof of \eqref{CAL} and \eqref{COME}}]
Direct calculations yield
\begin{equation}
  D^\alpha(fg)=\sum_{\beta\leq\alpha}C_\alpha^\beta D^\beta fD^{\alpha-\beta}g\label{1028-0}
\end{equation}
and
\begin{equation}
 [D^\alpha, f]g=D^\alpha(fg)-fD^\alpha g=\sum_{0\not=\beta\leq\alpha}C_\alpha^\beta D^\beta fD^{\alpha-\beta}g,\label{1028-00}
\end{equation}
where $\beta\leq\alpha$ means that all components of $\beta$ are less or equal than the corresponding ones of $\alpha$.

Take arbitrary $\beta\leq\alpha$ and choose $q_1, q_2\in[1,\infty]$ satisfying
\begin{equation}
  \frac{1}{q_1}=\frac1{r_1}\left(1-\frac{|\beta|}{m}\right)+\frac1{s_2}\frac{|\beta|}m,\quad
 \frac{1}{q_2}=\frac1{r_2}\left(1-\frac{|\alpha-\beta|}{m}\right)+\frac1{s_1}\frac{|\alpha-\beta|}m. \label{1028-1}
\end{equation}
Then, recalling that $|\alpha|=m$ and $\beta\leq\alpha$, one has
\begin{eqnarray*}
 \frac{1}{q_1}-\frac{|\beta|}n&=&\frac1{r_1}\left(1-\frac{|\beta|}{m}\right)+\left(\frac1{s_2}-\frac mn\right)\frac{|\beta|}m,\\
 \frac{1}{q_2}-\frac{|\alpha-\beta|}n&=&\frac1{r_2}\left(1-\frac{|\alpha-\beta|}{m}\right)+\left(\frac1{s_1}-\frac m n\right)\frac{|\alpha-\beta|}m\\
 &=&\frac1{r_2} \frac{|\beta|}{m}+\left(\frac1{s_1}-\frac m n\right)\left(1-\frac{|\beta|}{m}\right),
\end{eqnarray*}
and, thus, by the Gagliado-Nirenberg inequality, it holds that
\begin{equation}
  \label{1028-4}
  \|D^\beta f\|_{q_1}\leq C\|f\|_{r_1}^{1-\frac{|\beta|}m}\|f\|_{W^{m,s_2}}^{\frac{|\beta|}m}, \quad  \|D^{\alpha-\beta}g\|_{q_2}\leq C\|g\|_{r_2}^{\frac{|\beta|}m}\|g\|_{W^{m,s_1}}^{1-\frac{|\beta|}m}.
\end{equation}
Recalling $|\alpha|=m$, $\beta\leq\alpha$, and using \eqref{CDT}, one obtains from \eqref{1028-1} that
\begin{eqnarray}
  \frac1{q_1}+\frac1{q_2}=\left(\frac1{r_1}+\frac1{s_1}\right)\left(1-\frac{|\beta|}m\right)+\left(\frac1{s_2}+\frac1{r_2}\right)\frac{|\beta|}m=\frac1q. \label{1028-3}
\end{eqnarray}
Combing \eqref{1028-4} with \eqref{1028-3}, it follows from the H\"older and Young inequalities that
\begin{eqnarray*}
  \|D^\beta fD^{\alpha-\beta}g\|_q&\leq& \|D^\beta f\|_{q_1}\|D^{\alpha-\beta}g\|_{q_2}
  \leq C \|f\|_{r_1}^{1-\frac{|\beta|}m}\|f\|_{W^{m,s_2}}^{\frac{|\beta|}m} \|g\|_{r_2}^{\frac{|\beta|}m}\|g\|_{W^{m,s_1}}^{1-\frac{|\beta|}m}\nonumber\\
  &\leq&C\left(\|f\|_{r_1}\|g\|_{W^{m, s_1}}+\|g\|_{r_2}\|f\|_{W^{m,s_2}}\right),
\end{eqnarray*}
that is
\begin{equation}
  \label{1028-5}
  \|D^\beta fD^{\alpha-\beta}g\|_q \leq C\left(\|f\|_{r_1}\|g\|_{W^{m, s_1}}+\|g\|_{r_2}\|f\|_{W^{m,s_2}}\right),\quad\forall\beta\leq\alpha, |\alpha|=m.
\end{equation}
Thanks to this and recalling \eqref{1028-0}, one obtains \eqref{CAL}.

Take arbitrary $0\not=\beta=(\beta_1,\beta_2,\beta_3)\leq\alpha$. Since $\beta\not=0$, it follows that $\beta_i\geq1$ for some $i\in\{1,2,3\}$ and, thus,
$\alpha_i\geq\beta_i\geq1$.
Without loss of generality assume that $\alpha_1\geq\beta_1\geq1$ (other cases can be dealt with similarly).
Denote $\tilde\alpha=(\alpha_1-1,\alpha_2,\alpha_3)$ and $\tilde\beta=(\beta_1-1,\beta_2,\beta_3)$. It is clear that $|\tilde\alpha|=m-1$ and $\tilde\alpha\geq\tilde\beta$. Then,
by \eqref{1028-5}, one has
\begin{eqnarray}
  \|D^\beta fD^{\alpha-\beta}g\|_q&=&\|D^{\tilde\beta}\partial_1fD^{\tilde\alpha-\tilde\beta}g\|_q \nonumber\\
  &\leq& C\left(\|\partial_1f\|_{r_1}\|g\|_{W^{m-1, s_1}}+\|g\|_{r_2}\|\partial_1f\|_{W^{m-1,s_2}}\right)\nonumber
\end{eqnarray}
and thus
\begin{equation}
  \label{1028-6}
  \|D^\beta fD^{\alpha-\beta}g\|_q \leq C\left(\|\nabla f\|_{r_1}\|g\|_{W^{m, s_1}}+\|g\|_{r_2}\|\nabla f\|_{W^{m-1,s_2}}\right),
\end{equation}
for any $0\not=\beta\leq\alpha$ with $|\alpha|=m$. Combining \eqref{1028-6} with \eqref{1028-00} leads to \eqref{COME}.
\end{proof}

\section{Appendix B: local existence of the regularized system}
\label{APPENDIXB}
Denote
\begin{eqnarray*}
  N_1(v,\sigma,p):=-[(v\cdot\nabla_h)v+w\partial_zv+\sigma\nabla_hp],\label{N1}\\
  N_2(v,\sigma,p):=-[v\cdot\nabla_h\sigma+w\partial_z\sigma+\sigma(\phi(v,p)-\text{div}_hv)],\label{N2}\\
  N_3(v,p):=-(\bar v\cdot\nabla_hp+\gamma p\text{div}_h\bar v)+(\gamma-1)\overline{Q(\nabla v)},\label{N3}
\end{eqnarray*}
where $w, \phi(v,p), \bar f,$ and $Q(\nabla v)$ are expressed, respectively, by\eqref{EQw00'}, \eqref{phi}, \eqref{TBf}, and \eqref{Q}.
Recall $\mathcal O=\mathbb T^2\times(0,1)$. Define a solution mapping
\begin{equation}\label{DEFF}
(v,\sigma,p)\mapsto(V,\Sigma,P)=:\mathfrak{F}(v,\sigma,p),
\end{equation}
where $(V,\Sigma,P)$ is the unique solution to the following linear system
\begin{eqnarray}
  \partial_tV-\mu\sigma\Delta V-(\mu+\lambda)\sigma\nabla_h\text{div}_hV&=&N_1(v,\sigma,p),\label{AEQV}\\
  \partial_t\Sigma-\nu\sigma\partial_z^2\Sigma-\epsilon\Delta_h\Sigma&=&N_2(v,\sigma,p),\label{AEQSIGMA}\\
  \partial_tP-\epsilon\Delta_hP&=&N_3(v,p), \label{AEQP}
\end{eqnarray}
in $\mathcal O\times(0,T)$, subject to
\begin{eqnarray}
  &V,\Sigma, P\text{ are periodic in }x,y, \quad\partial_zV|_{z=0,1}=0,\quad \partial_z\Sigma|_{z=0,1}=0,\label{ABC}\\
  &(V,\Sigma,P)|_{t=0}=(v_0,\sigma_0,p_0),\label{AIC}
\end{eqnarray}
where $(v_0,\sigma_0)\in H^3(\mathcal O)$, $p_0\in H^3(\mathbb T^2),$ and
\begin{equation*}
  \partial_zv_0|_{z=0,1}=0,\quad \partial_z\sigma_0|_{z=0,1}=0,\quad\sigma_0\geq\underline\sigma, \quad p_0\geq\underline p,\label{A1}
\end{equation*}
for two positive numbers $\underline\sigma$ and $\underline p$.

It will be shown in this section that there is a unique fixed point to the map $\mathfrak F$,
which is the unique local solution
to system (\ref{EQv})--(\ref{EQp}), subject to (\ref{BC1})--(\ref{IC}).

Set
\begin{align*}
\mathscr{X}_{T}:=\big\{&f|f\in L^\infty(0,T; H^3(\mathbb T^2))\cap  L^2(0,T; H^4(\mathbb T^2)),\partial_tf\in L^2(0,T;H^2(\mathbb T^2))\big\},\\
\mathscr{Y}_T:=\big\{&f|f\in L^\infty(0,T; H^3(\mathcal O))\cap  L^2(0,T; H^4(\mathcal O)),\\
&\partial_tf\in L^2(0,T;H^2(\mathcal O)),\partial_zf|_{z=0,1}=0\big\},\\
\mathscr{Z}_T:=\big\{&f|f\in  L^\infty(0,T; H^3(\mathbb T^3))\cap L^2(0,T; H^4(\mathbb T^3)),\partial_tf\in L^2(0,T;H^2(\mathbb T^3))\big\}.
\end{align*}

\subsection{Some calculations on $N_i$, $i=1,2,3$}
\begin{proposition}
  \label{PROPA1-1}
   Let $p_i\in\mathscr X_T$ and $(v_i, \sigma_i)\in\mathscr Y_T$, $i=1,2$, satisfying $p_i\geq0.5\underline p$ and
   \begin{eqnarray*}
   \|(v_i,\sigma_i)\|_{L^\infty(0,T; H^3(\mathcal O))}+\|p_i\|_{L^\infty(0,T;H^3(\mathbb T^2))}\leq M,
   \end{eqnarray*}
   for a positive constant $M$. Then, there is a positive constant $C$, depending only on $M, \underline p, \mu, \lambda, \nu, \gamma$, such that
   \begin{eqnarray*}
     &&\|N_j(v_1,\sigma_1,p_1)-N_j(v_2,\sigma_2,p_2)\|_{L^\infty(0,T; H^2(\mathcal O))}\\
     &\leq&C\left(\|(v_1-v_2,\sigma_1-\sigma_2)\|_{L^\infty(0,T;H^3(\mathcal O))}+\|p_1-p_2\|_{L^\infty(0,T;H^3(\mathbb T^2))}\right),\quad j=1,2,\\
     &&\|N_3(v_1,p_1)-N_3(v_2,p_2)\|_{L^\infty(0,T; H^2(\mathbb T^2))}\\
     &\leq&C\left(\|v_1-v_2\|_{L^\infty(0,T;H^3(\mathcal O))}+\|p_1-p_2\|_{L^\infty(0,T;H^3(\mathbb T^2))}\right),
   \end{eqnarray*}
   and
   $$
   \sum_{j=1}^2\|N_j(v_i,\sigma_i,p_i)\|_{L^\infty(0,T; H^2(\mathcal O))}+\|N_3(v_i,p_i)\|_{L^\infty(0,T; H^2(\mathbb T^2))}\leq C.
   $$
\end{proposition}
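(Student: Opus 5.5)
The plan is to estimate every nonlinear term of $N_1,N_2,N_3$ pointwise in time in $H^2$. The main tool is the algebra property \eqref{ALGHk} in $H^2(\mathcal O)$ and $H^2(\mathbb T^2)$, combined with the bound \eqref{H21P} on $1/p$. The Lipschitz estimates then follow from the usual telescoping identity $ab-a'b'=(a-a')b+a'(b-b')$. A function on $\mathbb T^2$ is regarded as a $z$-independent function on $\mathcal O$, so that $\|f\|_{H^k(\mathcal O)}=\|f\|_{H^k(\mathbb T^2)}$. The operations $f\mapsto\bar f$, $f\mapsto\tilde f$ and $f\mapsto\int_0^zf\,dz'$ are bounded on $H^k(\mathcal O)$; the last even gains one $z$-derivative.

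First, fix $t$ and work with $\|v_i\|_{H^3},\|\sigma_i\|_{H^3},\|p_i\|_{H^3}\le M$ and $p_i\ge0.5\underline p$. Exactly as in \eqref{H21P}, $\|1/p_i\|_{H^2}\le C$, where $C$ depends on $M$ and $\underline p$. Moreover
$$\frac1{p_1}-\frac1{p_2}=-\frac{p_1-p_2}{p_1p_2},$$
so \eqref{ALGHk} gives $\|1/p_1-1/p_2\|_{H^2}\le C\|p_1-p_2\|_{H^2}$.

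Next, $Q(\nabla v)$ is quadratic in $\nabla v$, and $\nabla v\in H^2$, so $\|Q(\nabla v_i)\|_{H^2}\le C$ and $\|Q(\nabla v_1)-Q(\nabla v_2)\|_{H^2}\le C\|v_1-v_2\|_{H^3}$. Applying the same reasoning to \eqref{phi} gives $\|\phi(v_i,p_i)\|_{H^2}\le C$ and
$$\|\phi(v_1,p_1)-\phi(v_2,p_2)\|_{H^2}\le C\bigl(\|v_1-v_2\|_{H^3}+\|p_1-p_2\|_{H^3}\bigr).$$
Here $\nabla_hp$ appears with only one derivative, so it lies in $H^2$ and the $H^3$ norm of $p$ suffices. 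For $w=\nu\partial_z\sigma-\int_0^z\phi\,dz'$ I will use the $w$ of \eqref{EQw00'}, as the appendix says. This yields $\|w_i\|_{H^2}\le C$ and $\|w_1-w_2\|_{H^2}\le C(\|\sigma_1-\sigma_2\|_{H^3}+\|v_1-v_2\|_{H^3}+\|p_1-p_2\|_{H^3})$. It is this step that requires $\sigma\in H^3$ instead of $H^2$, which is why the appendix works in $\mathscr Y_T$ for $\sigma$ as well.

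With these ingredients the bounds on the $N_j$ follow term by term. In $N_1$ I will bound $(v\cdot\nabla_h)v$, $w\partial_zv$ and $\sigma\nabla_hp$ in $H^2$ as products of $H^2$ factors: $v$, $\nabla v$, $w$, $\sigma$ and $\nabla_hp$ all lie in $H^2$ with norm at most $C$. In $N_2$ the terms $v\cdot\nabla_h\sigma$, $w\partial_z\sigma$ and $\sigma(\phi-\mathrm{div}_hv)$ are handled the same way. In $N_3$ I will use $\|\bar v\|_{H^2(\mathbb T^2)}\le\|v\|_{H^2(\mathcal O)}$ and $\|\overline{Q}\|_{H^2(\mathbb T^2)}\le\|Q\|_{H^2(\mathcal O)}$, which follow by Cauchy--Schwarz in $z$. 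Every difference splits into a sum of products with exactly one difference factor, and each such product is bounded via \eqref{ALGHk}. Taking the supremum over $t\in(0,T)$ gives the stated $L^\infty(0,T;H^2)$ bounds.

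I do not expect a real obstacle; the argument is bookkeeping. The only delicate points are, first, the bound on $1/p$ and its difference, which needs the lower bound $p_i\ge0.5\underline p$; and second, that in the term $\partial_z\sigma$ inside $w$ one derivative is spent on $\sigma$, so the available regularity $\sigma\in H^3$ is fully used. No time derivatives or $L^2(0,T;H^4)$ norms are needed for this statement.
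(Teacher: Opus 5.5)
Your proposal is correct and follows essentially the paper's route: the $H^2$ algebra property \eqref{ALGHk}, the bound \eqref{H21P} on $1/p$, the difference estimates for $\phi$ and $w$ (which consume $\sigma\in H^3$), and term-by-term telescoping. The only cosmetic difference is how the uniform bounds on $N_j(v_i,\sigma_i,p_i)$ are obtained: the paper derives them from its Lipschitz estimates via the vanishing identities $N_1(0,\sigma_i,0.5\underline p)=N_2(0,0.5\underline\sigma,p_i)=N_3(0,p_i)=0$, whereas you bound each term directly.
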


\begin{proof}
Noticing that
\begin{eqnarray*}
  \phi(v_1,p_1)-\phi(v_2,p_2)
   = \text{div}_h(\tilde v_1-\tilde v_2)+\frac{1}{\gamma p_1}\Big[\tilde v_1\cdot\nabla_h(p_1-p_2)+(\tilde v_1-\tilde v_2)\cdot\nabla_hp_2\\
  -(\gamma-1)\left(\widetilde{Q(\nabla v_1)}-\widetilde{Q(\nabla v_2)}\right)\Big]+\frac{p_1-p_2}{\gamma  p_1p_2}\left(\tilde v_2\cdot\nabla_hp_2-(\gamma-1)\widetilde{Q(\nabla v_2)}\right),
  \end{eqnarray*}
it follows from \eqref{ALGHk} and \eqref{H21P} that
\begin{eqnarray}
  &&\|\phi(v_1,p_1)-\phi(v_2,p_2)\|_{H^2}\nonumber\\
  &\leq&C\|v_1-v_2\|_{H^3}+C\left\|\frac{1}{p_1}\right\|_{H^2}[\|v_1\|_{H^2}\|p_1-p_2\|_{H^3}+\|v_1-v_2\|_{H^2}\|p_2\|_{H^3}\nonumber\\
  &&+\|(\nabla v_1,\nabla v_2)\|_{H^2}\|\nabla v_1-\nabla v_2\|_{H^2}]+C\left\|\frac{1}{p_1}\right\|_{H^2}\left\|\frac{1}{p_2}\right\|_{H^2}\|p_1-p_2\|_{H^2}\nonumber\\
  &&\times(\|v_2\|_{H^2}\|\nabla p_2\|_{H^2}+\|\nabla v_2\|_{H^2}^2)\nonumber\\
  &\leq&C(\|v_1-v_2\|_{H^3}+\|p_1-p_2\|_{H^3}).\label{ESTPHID}
\end{eqnarray}
Therefore,
\begin{eqnarray}
  \|w_1-w_2\|_{H^2}&\leq&C(\|\sigma_1-\sigma_2\|_{H^3}+\|\phi(v_1,p_1)-\phi(v_2,p_2)\|_{H^2})\nonumber\\
  &\leq&C(\|\sigma_1-\sigma_2\|_{H^3}+\|v_1-v_2\|_{H^3}+\|p_1-p_2\|_{H^3}).\label{ESTWD}
\end{eqnarray}
Then, noticing that $\phi(0,p_i)=0$, it follows that
\begin{eqnarray}
  \|\phi(v_i,p_i)\|_{H^2}=\|\phi(v_i,p_i)-\phi(0,p_i)\|_{H^2}\leq C\|v_i\|_{H^3}\leq C, \label{ESTPHI}
\end{eqnarray}
and
\begin{eqnarray}
  \|w_i\|_{H^2}\leq C\|\partial_z\sigma_i\|_{H^2}+C\|\phi(v_i,p_i)\|_{H^2}\leq C. \label{ESTW}
\end{eqnarray}
Thanks to \eqref{ESTWD} and \eqref{ESTW}, and using \eqref{ALGHk}, one deduces that
\begin{eqnarray*}
  &&\|N_1(v_1,\sigma_1,p_1)-N_2(v_2,\sigma_2,p_2)\|_{H^2}\nonumber\\
  &\leq&C(\|v_1\cdot\nabla_h(v_1-v_2)\|_{H^2}+\|(v_1-v_2)\cdot\nabla_hv_2\|_{H^2}+\|w_1\partial_z(v_1-v_2)\|_{H^2}\nonumber\\
  &&+\|(w_1-w_2)\partial_zv_2\|_{H^2}+\|\sigma_1\nabla_h(p_1-p_2)\|_{H^2}+\|(\sigma_1-\sigma_2)\nabla_hp_2\|_{H^2}\nonumber\\
  &\leq&C(\|v_1\|_{H^2}\|v_1-v_2\|_{H^3}+\|v_1-v_2\|_{H^2}\|v_2\|_{H^3}+\|w_1\|_{H^2}\|v_1-v_2\|_{H^3}\nonumber\\
  &&+\|w_1-w_2\|_{H^2}\|v_2\|_{H^3}+\|\sigma_1\|_{H^2}\|p_1-p_2\|_{H^3}+\|\sigma_1-\sigma_2\|_{H^2}\|p_2\|_{H^3})\nonumber\\
  &\leq&C(\|v_1-v_2\|_{H^3}+\|\sigma_1-\sigma_2\|_{H^3}+\|p_1-p_2\|_{H^2}).
\end{eqnarray*}
With the aid of this and noticing that $N_1(0,\sigma_i,0.5\underline p)=0$, it holds that
\begin{eqnarray*}
  \|N_1(v_i,\sigma_i,p_i)\|_{H^2}&=&\|N_1(v_i,\sigma_i,p_i)-N_1(0,\sigma_i,0.5\underline p)\|_{H^2}\nonumber\\
  &\leq&C(\|v_i\|_{H^3}+\|p_i-0.5\underline p\|_{H^3})\leq C.
\end{eqnarray*}
Combining \eqref{ESTPHID}--\eqref{ESTW} and using \eqref{ALGHk}, one computes
\begin{eqnarray*}
  &&\|N_2(v_1,\sigma_1,p_1)-N_2(v_2,\sigma_2,p_2)\|_{H^2}\\
  &\leq&\|v_1\cdot\nabla_h(\sigma_1-\sigma_2)\|_{H^2}+\|(v_1-v_2)\cdot\nabla_h\sigma_2\|_{H^2}+\|w_1\partial_z(\sigma_1-\sigma_2)\|_{H^2}\\
  &&+\|(w_1-w_2)\partial_z\sigma_2\|_{H^2}+\|\sigma_1(\phi(v_1,p_1)-\phi(v_2,p_2))\|_{H^2}+\|(\sigma_1-\sigma_2)\phi(v_2,p_2)\|_{H^2}\\
  &&+\|\sigma_1\text{div}_h(v_1-v_2)\|_{H^2}+\|(\sigma_1-\sigma_2)\text{div}_hv_2\|_{H^2}\\
   &\leq&C(\|v_1\|_{H^2}\|\sigma_1-\sigma_2\|_{H^3}+\|v_1-v_2\|_{H^2}\|\sigma_2\|_{H^3}+\|w_1\|_{H^2}\|\sigma_1-\sigma_2\|_{H^3}\\
  &&+\|w_1-w_2\|_{H^2}\|\sigma_2\|_{H^3}+\|\sigma_1\|_{H^2}\|\phi(v_1,p_1)-\phi(v_2,p_2)\|_{H^2}\\
  &&+\|\sigma_1-\sigma_2\|_{H^2}\|\phi(v_2,p_2)\|_{H^2}+\|\sigma_1\|_{H^2}\|v_1-v_2\|_{H^3}+\|\sigma_1-\sigma_2\|_{H^2}\|v_2\|_{H^3})\\
   &\leq&C(\|v_1-v_2\|_{H^3}+\|\sigma_1-\sigma_2\|_{H^3}+\|p_1-p_2\|_{H^2}).
\end{eqnarray*}
Using this and noticing that $N_2(0,0.5\underline\sigma,p_i)=0$, one gets
\begin{eqnarray*}
  \|N_2(v_i,\sigma_i,p_i)\|_{H^2}&=&\|N_2(v_i,\sigma_i,p_i)-N_2(0,0.5\underline\sigma,p_i)\|_{H^2}\\
  &\leq& C(\|v_i\|_{H^3}+\|\sigma_i-0.5\underline\sigma
  \|_{H^3})\leq C.
\end{eqnarray*}
It follows from \eqref{ALGHk} that
\begin{eqnarray*}
  &&\|N_3(v_1,p_1)-N_3(v_2,p_2)\|_{H^2}\nonumber\\
  &\leq&\|\bar v_1\cdot\nabla_h(p_1-p_2)\|_{H^2}+\|(\bar v_1-\bar v_2)\cdot\nabla_hp_2\|_{H^2}+\gamma\|p_1\text{div}_h(\bar v_1-\bar v_2)\|_{H^2}\\
  &&+\gamma(p_1-p_2)\text{div}_h\bar v_2\|_{H^2}+(\gamma-1)\|\overline{Q(\nabla v_1)}-\overline{Q(\nabla v_2)}\|_{H^2}\\
  &\leq&C(\|v_1\|_{H^2}\|p_1-p_2\|_{H^3}+\|v_1-v_2\|_{H^2}\|p_2\|_{H^3}+ \|p_1\|_{H^2}\| v_1-v_2\|_{H^3}\\
  &&+\|p_1-p_2\|_{H^2}\| v_2\|_{H^3}+\|v_1+v_2\|_{H^3}\|v_1-v_2\|_{H^3})\\
  &\leq&C(\|v_1-v_2\|_{H^3}+\|p_1-p_2\|_{H^2}).
\end{eqnarray*}
Thus, noticing that $N_3(0,p_i)=0$, one gets
\begin{eqnarray*}
  \|N_3(v_i,p_i)\|_{H^2}= \|N_3(v_i,p_i)-N_3(0,p_i)\|_{H^2}\leq C\|v_i\|_{H^3}\leq C.
\end{eqnarray*}
The proof is complete.
\end{proof}

\subsection{Estimates on the linear system}
Let $\mathbb T^3$ be the three dimensional torus. For positive time $T$ denote $Q_T:=\mathbb T^3\times(0,T)$.
Throughout this subsection, the following facts will be used without further mentions
$$
  \|\nabla^2u\|_2^2=\|\Delta u\|_2^2,\quad\|\nabla\Delta u\|_2^2=\|\nabla^3 u\|_2^2,\quad \|\nabla^4u\|_2^2=\|\Delta^2 u\|_2^2,\quad\forall f\in H^4(\mathbb T^3),
$$
which can be verified directly by integrating by parts, with the help of density arguments if necessary.

We start with a spatially periodic initial boundary value problem which, under some symmetry assumptions, is equivalent to the
linear system that we required.

\begin{proposition}
  \label{PROPA2-1}
  Given $u_0\in H^3(\mathbb T^3)$, $f\in L^2(0,T; H^2(\mathbb T^3))$, and $a,b\in L^\infty(Q_T)\cap L^4(0,T; H^2(\mathbb T^3)$, satisfying $a\geq\underline a$ and $b\geq0$ for some positive number $\underline a$. Assume
  that $u\in\mathscr{Z}_T$ is a solution to
  \begin{eqnarray}
    \partial_tu-a\Delta u-b\nabla_h\text{div}_hu=f,\label{A2-1}\\
    u \text{ is  periodic in }x,y,z,\quad u|_{t=0}=u_0.\nonumber
  \end{eqnarray}
  Then, there is a positive constant $C$ depending only on $\underline a$, such that
  \begin{eqnarray*}
    \sup_{0\leq t\leq T}\|u\|_2^2+\int_0^T\|u\|_{H^1}^2dt \leq  C\left(\|u_0\|_2^2+\int_0^T\|f\|_2^2dt\right)e^{C\left(T+\int_0^T\|(\Delta a,\Delta b)\|_2^4dt\right)}, \\
    \sup_{0\leq t\leq T}\|\nabla u\|_2^2+\int_0^T\|\Delta u\|_2^2dt \leq C\left(\|\nabla u_0\|_2^2+\int_0^T\|f\|_2^2dt\right)e^{C \int_0^T\|\Delta b\|_2^4dt},
  \end{eqnarray*}
and
  \begin{align*}
    \sup_{0\leq t\leq T}\|D^k u\|_2^2+\int_0^T\|D^{k+1} u\|_2^2dt
    \leq& C\left(\|D^ku_0\|_2^2+\int_0^T\|D^{k-1} f\|_2^2dt\right)\\
    &\times e^{C \int_0^T\|(\Delta a,\Delta b)\|_2^4dt },\quad k=2,3.
  \end{align*}
Moreover, the following estimates hold
$$
    \int_0^T\|\partial_tu\|_2^2dt \leq  C\left(\|\nabla u_0\|_2^2+\int_0^T\|f\|_2^2dt\right)e^{C \int_0^T\|\Delta b\|_2^4dt}\|(a,b)\|_{L^\infty(Q_T)}^2 ,
$$
and
\begin{align*}
    \int_0^T\|D^{k-1} \partial_tu\|_2^2dt\leq&  C\left(\|D^ku_0\|_2^2+\int_0^T\|D^{k-1} f\|_2^2dt\right)\\
    &\times e^{C \int_0^T\|(\Delta a,\Delta b)\|_2^4dt}\|(a,b)\|_{L^\infty(Q_T)}^2,\quad k=2,3.
\end{align*}
\end{proposition}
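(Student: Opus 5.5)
We prove Proposition \ref{PROPA2-1} by energy estimates at each derivative level on the torus $\mathbb T^3$. The regularity $u\in\mathscr Z_T$ justifies every integration by parts, and there are no boundary terms. The key structural fact is that $a\Delta u+b\nabla_h\text{div}_hu$ is uniformly elliptic when tested correctly, since $a\geq\underline a$ and $b\geq0$. The coefficients are only in $L^\infty\cap L^4_tH^2$, so every term in which derivatives fall on $a$ or $b$ must be controlled by $\|(\Delta a,\Delta b)\|_2$. For this we use $\|\nabla a\|_6+\|\nabla^2a\|_2\leq C\|\Delta a\|_2$ (periodic, mean-free gradient), together with Gagliardo--Nirenberg and Young inequalities. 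The Young step always produces a Gronwall factor $\|(\Delta a,\Delta b)\|_2^4$, which explains the exponent $4$.

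\textbf{Level $k=1$.} Test \eqref{A2-1} with $-\Delta u$. The principal part gives $\int a|\Delta u|^2+\int b\,\nabla_h\text{div}_hu\cdot\Delta u$. For the $b$ term, write $\Delta u=\nabla_h\text{div}_hu+(\Delta u-\nabla_h\text{div}_hu)$ and integrate by parts once. This gives $\int b|\nabla\text{div}_hu|^2\geq0$ plus commutator terms of the form $\int\nabla b\,\nabla u\,\nabla^2u$. These are bounded by
\[
\|\nabla b\|_6\|\nabla u\|_3\|\nabla^2u\|_2\leq C\|\Delta b\|_2\|\nabla u\|_2^{1/2}\|\nabla^2u\|_2^{3/2}\leq \tfrac{\underline a}{4}\|\Delta u\|_2^2+C\|\Delta b\|_2^4\|\nabla u\|_2^2 .
\]
The forcing is handled by $\int f\cdot\Delta u\leq\frac{\underline a}4\|\Delta u\|_2^2+C\|f\|_2^2$. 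Gronwall then gives the second estimate. The $L^2$ estimate, obtained by testing with $u$, generates $\int\nabla a\,\nabla u\,u$-type terms after integrating $a\Delta u\cdot u$ by parts. These are controlled similarly using $\|\nabla a\|_6\|u\|_3\|\nabla u\|_2$, which yields the factor $e^{C(T+\int\|(\Delta a,\Delta b)\|_2^4)}$; alternatively one simply uses $\|f\|_2\|u\|_2$ and the previous bound.

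\textbf{Levels $k=2,3$.} Apply $D^{k-1}$ to \eqref{A2-1} and test with $-\Delta D^{k-1}u$. Move one derivative from the test function onto $D^{k-1}f$, giving $\|D^{k-1}f\|_2\|D^{k+1}u\|_2$. Alternatively, for $k=2,3$ one can apply $D^{k-2}$ and test with $-\Delta D^{k}u$ after integrating by parts; I will pick whichever keeps exactly $D^{k-1}f$. The principal part again gives $\underline a\|D^{k+1}u\|_2^2$ up to the same $b$-commutator. The remaining commutators $[D^{k-1},a]\Delta u$ are estimated by \eqref{COME}.

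\textbf{Main obstacle.} The hardest case is $k=3$. The worst term is $D^2a\,\nabla^2u$ tested against $D^4u$, where only $D^2a\in L^2$ is available. Here I use
\[
\|D^2a\|_2\|\nabla^2u\|_\infty\leq C\|\Delta a\|_2\|D^3u\|_2^{1/2}\|D^4u\|_2^{1/2},
\]
followed by Young, which gives $\|\Delta a\|_2^4\|D^3u\|_2^2$. The term $\nabla a\,D^3u$ is handled with $\|\nabla a\|_6\|D^3u\|_3$ in the same way. Lower-order pieces such as $\|D^2u\|_2$ must be absorbed into the Gronwall quantity; if needed, the $k$-level estimate is added to the $k-1$ one, which matches the stated form since the constants depend only on $\underline a$.

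\textbf{Time derivative.} Finally, write $\partial_tu=f+a\Delta u+b\nabla_h\text{div}_hu$. This gives
\[
\|\partial_tu\|_2\leq\|f\|_2+\|(a,b)\|_{L^\infty}\|\Delta u\|_2 .
\]
Integrating in time and inserting the $k=1$ bound gives the first time estimate. For $D^{k-1}\partial_tu$, we distribute derivatives: the top term is $\|(a,b)\|_\infty\|D^{k+1}u\|_2$, and the terms with derivatives on $a,b$ are bounded as in the commutator step. The resulting time integrals are controlled by the level-$k$ energy bound. There is some care needed so that no stray factors appear beyond the stated $\|(a,b)\|_{L^\infty}^2$, which can be absorbed by enlarging $C$ in the exponent.
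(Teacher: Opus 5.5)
Your proposal is correct and is essentially the paper's proof: the same level-by-level energy estimates, where testing $D^{k-1}$ of the equation against $-\Delta D^{k-1}u$ is equivalent to the paper's multipliers $\Delta^2u$ and, after applying $\Delta$, $-\Delta^2u$. The ingredients also coincide: the bound $\|\nabla a\|_6+\|\nabla^2a\|_2\leq C\|\Delta a\|_2$ combined with Young's inequality to generate the $\|(\Delta a,\Delta b)\|_2^4$ Gronwall factor, and time-derivative bounds read off from the equation, with the stray factor $(\int_0^T\|(\Delta a,\Delta b)\|_2^4dt)^{1/2}$ absorbed via $e^z\geq2\sqrt z$ and $\|(a,b)\|_{L^\infty(Q_T)}^2$ inserted using $\|(a,b)\|_{L^\infty(Q_T)}\geq\underline a$.

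One correction: no lower-order terms actually arise, because on $\mathbb T^3$ the Gagliardo--Nirenberg bounds for the mean-zero derivatives of $u$ are homogeneous (as in your own display). So expand the Leibniz terms directly rather than quoting \eqref{COME}, whose full Sobolev norms would bring in lower-order terms, and do not add the $(k-1)$-level estimate, since that changes the right-hand side, not merely the constant.
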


\begin{proof}
  Multiplying \eqref{A2-1} with $u$ and integrating by parts, it follows from the Gagliardo-Nirenberg and Young inequalities that
  \begin{eqnarray*}
    &&\frac12\frac{d}{dt}\|u\|_2^2+\int(a|\nabla u|^2+b|\text{div}_hu|^2)dxdydz\\
    &=&-\int(\nabla a\cdot\nabla u\cdot u+\text{div}_hu\nabla_hb\cdot u-f\cdot u)dxdydz\\
    &\leq&C\int[(|\nabla a|+|\nabla b|)|\nabla u||u|+|f||u|]dxdydz\\
    &\leq&C(\|(\nabla a,\nabla b)\|_6\|\nabla u\|_2\|u\|_3+\|f\|_2\|u\|_2)\\
    &\leq&C\left[\|(\Delta a,\Delta b)\|_2\|\nabla u\|_2\|u\|_2^\frac12\left(\|u\|_2+\|\nabla u\|_2\right)^\frac12+\|f\|_2\|u\|_2\right]\\
    &\leq&0.5\underline a\|\nabla u\|_2^2+C\left(\|(\Delta a,\Delta b)\|_2^4+1\right)\|u\|_2^2+C\|f\|_2^2 ,
  \end{eqnarray*}
  and thus
  \begin{equation*}
    \frac{d}{dt}\|u\|_2^2+\underline a\|u\|_{H^1}^2\leq C\left[\left(1+\|(\Delta a, \Delta b)\|_2^4\right)\|u\|_2^2+\|f\|_2^2\right].
  \end{equation*}
  Thanks to this and by the Gr\"onwall inequality, one obtains
  \begin{equation*}
    \sup_{0\leq t\leq T}\|u\|_2^2+\int_0^T\|u\|_{H^1}^2dt\leq C\left(\|u_0\|_2^2+\int_0^T\|f\|_2^2dt\right)e^{C\left(T+\int_0^T\|(\Delta a,\Delta b)\|_2^4dt\right)}.\label{ESTU}
  \end{equation*}

  Multiplying \eqref{A2-1} with $-\Delta u$ and integrating by parts, it follows from the Gagliardo-Nirenberg and Young inequalities that
  \begin{eqnarray*}
    &&\frac12\frac{d}{dt}\|\nabla u\|_2^2+\int a|\Delta u|^2dxdydz\\
    &=&-\int(b\nabla_h\text{div}_hu+f)\cdot\Delta u dxdydz\\
    &=&\int(b\text{div}_hu\Delta\text{div}_hu+\nabla_hb\cdot\Delta u\text{div}_hu-f\cdot\Delta u)dxdydz\\
    &=&-\int (b|\nabla\text{div}_hu|^2+\text{div}_hu\nabla b\cdot\nabla\text{div}_hu-\nabla_hb\cdot\Delta u\text{div}_hu+f\cdot\Delta u)dxdydz\\
    &\leq&C(\|\nabla b\|_6\|\nabla u\|_3\|\nabla^2u\|_2+\|f\|_2\|\Delta u\|_2)\\
    &\leq&C(\|\Delta b\|_2\|\nabla u\|_2^\frac12\|\Delta u\|_2^\frac32+\|f\|_2\|\Delta u\|_2)\\
    &\leq&0.5\underline a\|\Delta u\|_2^2+C\left(\|\Delta b\|_2^4\|\nabla u\|_2^2+\|f\|_2^2\right) ,
  \end{eqnarray*}
  and thus
  \begin{equation*}
    \frac{d}{dt}\|\nabla u\|_2^2+\underline a\int |\Delta u|^2dxdydz\leq C\left(\|\Delta b\|_2^4\|\nabla u\|_2^2+\|f\|_2^2\right).
  \end{equation*}
  Thanks to this and by the Gr\"onwall inequality, one obtains
  \begin{equation}
    \sup_{0\leq t\leq T}\|\nabla u\|_2^2+\int_0^T\|\Delta u\|_2^2dt\leq C\left(\|\nabla u_0\|_2^2+\int_0^T\|f\|_2^2dt\right)e^{C \int_0^T\|\Delta b\|_2^4dt}.\label{EST1U}
  \end{equation}

  Multiplying \eqref{A2-1} with $\Delta^2u$ and integrating by parts, one deduces by the Gagliardo-Nirenberg and Young inequalities that
  \begin{eqnarray*}
    &&\frac12\frac{d}{dt}\|\Delta u\|_2^2+\int a|\nabla\Delta u|^2dxdydz\\
    &=&\int(f+b\nabla_h\text{div}_hu)\cdot\Delta^2udxdyxz-\int\partial_ia\Delta u\cdot\partial_i\Delta udxdydz\\
    &=&\int(b\nabla_h\text{div}_h\Delta u+\Delta b\nabla_h\text{div}_hu+2\partial_ib\nabla_h\text{div}_h\partial_i u)\cdot\Delta udxdydz\\
    &&-\int\nabla f:\nabla\Delta udxdydz-\int\partial_ia\Delta u\cdot\partial_i\Delta udxdydz\\
    &=&-\int (b|\text{div}_h\Delta u|^2+\nabla_hb\text{div}_h\Delta u\cdot\Delta u)dxdyd\\
    &&+\int(\Delta b\nabla_h\text{div}_hu+2\partial_ib\nabla_h\text{div}_h\partial_i u)\cdot\Delta udxdydz\\
    &&-\int\nabla f:\nabla\Delta udxdydz-\int\partial_ia\Delta u\cdot\partial_i\Delta udxdydz\\
    &\leq&C(\|(\nabla a,\nabla b)\|_6\|\nabla^3u\|_2\|\Delta u\|_3+\|\Delta b\|_2\|\nabla^2u\|_4^2+\|\nabla f\|_2\|\nabla\Delta u\|_2)\\
    &\leq&C\left(\|(\Delta a, \Delta b)\|_2\|\nabla^2u\|_2^\frac12\|\nabla^3u\|_2^\frac32+\|\nabla f\|_2\|\nabla\Delta u\|_2\right)\\
    &\leq&0.5\underline a\|\nabla\Delta u\|_2^2+C(\|(\Delta a,\Delta b)\|_2^4\|\Delta u\|_2^2+\|\nabla f\|_2^2) ,
  \end{eqnarray*}
  and thus
  \begin{equation*}
    \frac{d}{dt}\|\Delta u\|_2^2+\underline a\|\nabla\Delta u\|_2^2
    \leq C(\|(\Delta a,\Delta b)\|_2^4\|\Delta u\|_2^2+\|\nabla f\|_2^2).
  \end{equation*}
  Applying the Gr\"owall inequality to the above leads to
  \begin{equation}
  \label{EST2U}
        \sup_{0\leq t\leq T}\|\Delta u\|_2^2+\int_0^T\|\nabla\Delta u\|_2^2dt
        \leq C\left(\|\Delta u_0\|_2^2+\int_0^T\|\nabla f\|_2^2dt\right)e^{C \int_0^T\|(\Delta a,\Delta b)\|_2^4dt }.
  \end{equation}

  Applying $\Delta$ to \eqref{A2-1}, multiplying the resultant with $-\Delta^2u$, and integrating by parts, it follows
  \begin{eqnarray*}
    &&\frac12\frac{d}{dt}\|\nabla\Delta u\|_2^2+\int a|\Delta^2u|^2dxdydz\\
    &=&-\int(\Delta a\Delta u+2\partial_ia\partial_i\Delta u)\cdot\Delta^2udxdydz\\
    &&-\int\Delta(f+b\nabla_h\text{div}_hu)\cdot\Delta^2udxdydz\\
    &=&-\int(\Delta a\Delta u+2\partial_ia\partial_i\Delta u+\Delta f)\cdot\Delta^2udxdydz\\
    &&-\int(\Delta b\nabla_h\text{div}_hu+2\partial_ib\nabla_h\text{div}_h\partial_iu)\cdot\Delta^2udxdydz\\
    &&-\int b\nabla_h\text{div}_h\Delta u\cdot\Delta^2udxdydz,
  \end{eqnarray*}
  from which, noticing that
  \begin{eqnarray*}
    &&-\int b\nabla_h\text{div}_h\Delta u\cdot\Delta^2udxdydz\\
    &=&\int(b\text{div}_h\Delta u\cdot\Delta^2\text{div}_hu+\text{div}_h\Delta u\nabla_hb\cdot\Delta^2u)dxdydz\\
    &=&-\int(b|\nabla\text{div}_h\Delta u|^2+\text{div}_h\Delta u\nabla b\cdot\nabla\Delta\text{div}_hu-\text{div}_h\Delta u\nabla_hb\cdot\Delta^2u)
    dxdydz\\
    &\leq&C\int|\nabla b||\nabla\Delta u||\nabla^4u|dxdydz,
  \end{eqnarray*}
  one obtains by the Gagliardo-Nirenberg and Young inequalities that
  \begin{eqnarray*}
    &&\frac12\frac{d}{dt}\|\nabla\Delta u\|_2^2+\int a|\Delta^2u|^2dxdydz\\
    &\leq&C\int(|\Delta a||\Delta u|+|\nabla a||\nabla\Delta u|+|\Delta f|)|\Delta^2u|dxdydz\\
    &&+C\int(|\Delta b||\nabla^2u|+|\nabla b||\nabla^3u|)|\nabla^4u|dxdydz\\
    &\leq&C(\|(\Delta a, \Delta b)\|_2\|\nabla^2u\|_\infty+\|(\nabla a,\nabla b)\|_6\|\nabla^3u\|_3+\|\Delta f\|_2)\|\nabla^4u\|_2\\
    &\leq&C\left(\|(\Delta a,\Delta b)\|_2\|\nabla\Delta u\|_2^\frac12\|\Delta^2u\|_2^\frac32+\|\Delta f\|_2\|\Delta^2u\|_2\right)\\
    &\leq&0.5\underline a\|\Delta^2u\|_2^2+C\left(\|(\Delta a,\Delta b)\|_2^4\|\nabla\Delta u\|_2^2+\|\Delta f\|_2^2\right).
  \end{eqnarray*}
  Therefore,
  \begin{equation*}
    \frac{d}{dt}\|\nabla\Delta u\|_2^2+\underline a\|\Delta^2u\|_2^2\leq C\left(\|(\Delta a,\Delta b)\|_2^4\|\nabla\Delta u\|_2^2+\|\Delta f\|_2^2\right),
  \end{equation*}
  which by the Gr\"onwall inequality implies
  \begin{equation}
  \label{EST3U}
    \sup_{0\leq t\leq T}\|\nabla\Delta u\|_2^2+\int_0^T\|\Delta^2u\|_2^2dt
    \leq C\left(\|\nabla\Delta u_0\|_2^2+\int_0^T\|\Delta f\|_2^2dt\right)e^{C\int_0^T\|(\Delta a,\Delta b)\|_2^2dt}.
  \end{equation}

  It follows from \eqref{A2-1} that
  \begin{eqnarray}
    \int_0^T\|\partial_tu\|_2^2dt&\leq&C\int_0^T(\|a\Delta u+b\nabla_h\text{div}_hu\|_2^2+\|f\|_2^2)dt\nonumber\\
    &\leq& C\left(\|(a,b)\|_{L^\infty(Q_T)}^2\int_0^T\|\Delta u\|_2^2dt+\int_0^T\|f\|_2^2dt\right). \label{ESTDTU}
  \end{eqnarray}
Using the Gagliardo-Nirenberg and Young inequalities, one obtains from \eqref{A2-1} that
  \begin{eqnarray}
    &&\int_0^T\|\nabla\partial_tu\|_2^2dt\leq C\int_0^T(\|\nabla(a\Delta u+b\nabla_h\text{div}_hu)\|_2^2+\|\nabla f\|_2^2)dt\nonumber\\
    &\leq&C \int_0^T\int[(|a|^2+|b|^2)|\nabla^3u|^2+(|\nabla a|^2+|\nabla b|^2)|\nabla^2u|^2+|\nabla f|^2]dxdydzdt\nonumber\\
    &\leq&C\int_0^T(\|(a,b)\|_\infty^2\|\nabla\Delta u\|_2^2+\|(\nabla a, \nabla b)\|_6^2\|\nabla^2u\|_3^2+\|\nabla f\|_2^2)dt\nonumber\\
    &\leq&C\int_0^T(\|(a,b)\|_\infty^2\|\nabla\Delta u\|_2^2+\|(\Delta a, \Delta b)\|_2^2\|\Delta u\|_2\|\nabla\Delta u\|_2
    +\|\nabla f\|_2^2)dt\nonumber\\
    &\leq&
    C\left(\int_0^T\|(\Delta a,\Delta b)\|_2^4dt\right)^\frac12\left(\sup_{0\leq t\leq T}\|\Delta u\|_2^2+
    \int_0^T\|\nabla\Delta u\|_2^2dt\right)\nonumber\\
    &&+C\left(\|(a,b)\|_{L^\infty(Q_T)}^2\int_0^T\|\nabla\Delta u\|_2^2dt+ \int_0^T\|\nabla f\|_2^2dt\right),\label{ESTDTU1}
    \end{eqnarray}
and
    \begin{eqnarray}
    &&\int_0^T\|\Delta\partial_tu\|_2^2dt\leq C\int_0^T(\|\Delta(a\Delta u+b\nabla_h\text{div}_hu)\|_2^2+\|\Delta f\|_2^2)dt\nonumber\\\nonumber\\
    &=&C\int_0^T\int(|a|^2|\Delta^2u|^2+|b|^2|\nabla^4u|^2+|\nabla a|^2|\nabla^3u|^2\nonumber\\
    &&+|\nabla b|^2|\nabla^3u|^2+|\Delta a|^2|\Delta u|^2+|\Delta b|^2|\nabla^2u|^2+|\Delta f|^2)dxdydzdt\nonumber\\
    &\leq&C\int_0^T(\|(a,b)\|_\infty^2\|\Delta^2u\|_2^2+\|(\nabla a,\nabla b)\|_6^2\|\nabla^3u\|_3^2\nonumber\\
    &&+\|(\Delta a,\Delta b)\|_2^2\|\nabla^2u\|_\infty^2+\|\Delta f\|_2^2)dt\nonumber\\
    &\leq&C\int_0^T(\|(a,b)\|_\infty^2\|\Delta^2u\|_2^2+\|(\Delta a,\Delta b)\|_2^2\|\nabla\Delta u\|_2\|\Delta^2u\|_2+\|\Delta f\|_2^2)dt\nonumber\\
    &\leq&
    C\left(\int_0^T\|(\Delta a,\Delta b)\|_2^4dt\right)^\frac12\left(\sup_{0\leq t\leq T}\|\nabla\Delta u\|_2^2+
    \int_0^T\|\Delta^2 u\|_2^2dt\right)\nonumber\\
    &&+C\left(\|(a,b)\|_{L^\infty(Q_T)}^2\int_0^T\|\Delta^2 u\|_2^2dt+ \int_0^T\|\Delta f\|_2^2dt\right).\label{ESTDTU2}
  \end{eqnarray}
Combining \eqref{ESTDTU}--\eqref{ESTDTU2} and noticing that $\|(a,b)\|_{L^\infty(Q_T)}\geq\underline a$ and $e^z\geq 1+z\geq2\sqrt z$ for $z\geq0$, one obtains the desired estimates for $\partial_tu$
from \eqref{EST1U}--\eqref{EST3U}.
The proof is complete.
\end{proof}

As a direct corollary of Proposition \ref{PROPA2-1}, we have the following corollary.

\begin{corollary}
\label{CORA2-1}
Assume that all the conditions of Proposition \ref{PROPA2-1} hold. Then, the following estimate holds
\begin{eqnarray*}
 \|u\|_{\mathscr{Z}_T}^2
 \leq C\mathcal K_T(a,b)\left(\|u_0\|_{H^3(\mathbb T^3)}^2+\|f\|_{L^2(0,T;H^2(\mathbb T^3))}^2\right),
\end{eqnarray*}
for a positive constant $C$ depending only on $\underline a$, where
\begin{eqnarray}
  \mathcal K_T{(a,b)}:=e^{C\left(T+ \int_0^T\|(\Delta a,\Delta b)\|_2^4dt\right)}\|(a,b)\|_{L^\infty(Q_T)}^2.\label{KT}
\end{eqnarray}
\end{corollary}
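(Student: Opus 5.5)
The corollary is a bookkeeping consequence of Proposition \ref{PROPA2-1}: we add up the estimates proved there and express each constant through $\mathcal K_T(a,b)$. We read $\|u\|_{\mathscr Z_T}^2$ as the sum
$$
\sup_{0\le t\le T}\|u\|_{H^3}^2+\int_0^T\|u\|_{H^4}^2\,dt+\int_0^T\|\partial_t u\|_{H^2}^2\,dt .
$$
On $\mathbb T^3$ we use the identities $\|\nabla^2u\|_2=\|\Delta u\|_2$, $\|\nabla^3u\|_2=\|\nabla\Delta u\|_2$ and $\|\nabla^4u\|_2=\|\Delta^2u\|_2$. With these, the $H^3$ norm is controlled by $\|u\|_2$, $\|\nabla u\|_2$, $\|D^2u\|_2$ and $\|D^3u\|_2$. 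The $H^4$ norm adds $\|D^4u\|_2$, and the $H^2$ norm of $\partial_tu$ is controlled by $\|\partial_tu\|_2$, $\|\nabla\partial_tu\|_2$ and $\|D^2\partial_tu\|_2$.

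First, we collect the bounds on $u$ itself. We sum the $L^2$ estimate, the $\nabla u$ estimate, and the $D^k$ estimates for $k=2,3$ from Proposition \ref{PROPA2-1}. Each right-hand side is bounded by
$$
C\bigl(\|u_0\|_{H^3}^2+\|f\|_{L^2(0,T;H^2)}^2\bigr)\,e^{C(T+\int_0^T\|(\Delta a,\Delta b)\|_2^4dt)}.
$$
This uses two facts: $\int_0^T\|D^{k-1}f\|_2^2\,dt\le\|f\|_{L^2(0,T;H^2)}^2$ for $k\le3$, and each exponential factor (some involve only $\Delta b$, or omit the $T$ term) is at most the largest one. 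The estimates also give $\int_0^T\|u\|_{H^1}^2$, $\int_0^T\|\Delta u\|_2^2$ and $\int_0^T\|D^{k+1}u\|_2^2$ for $k=2,3$, which together control $\int_0^T\|u\|_{H^4}^2$.

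Second, we collect the time-derivative bounds. The three $\partial_t u$ estimates of Proposition \ref{PROPA2-1} carry the extra factor $\|(a,b)\|_{L^\infty(Q_T)}^2$. To compare the spatial bounds with $\mathcal K_T(a,b)$, which also contains this factor, we use the lower bound $\|(a,b)\|_{L^\infty(Q_T)}\ge\|a\|_{L^\infty(Q_T)}\ge\underline a$. Hence $1\le\underline a^{-2}\|(a,b)\|_{L^\infty(Q_T)}^2$, and we absorb $\underline a^{-2}$ into $C$, which may depend on $\underline a$. Every term is then bounded by $C\,e^{C(T+\int_0^T\|(\Delta a,\Delta b)\|_2^4dt)}\|(a,b)\|_{L^\infty(Q_T)}^2$ times the data norm, and summing gives the claimed estimate, after possibly enlarging the constant $C$ inside $\mathcal K_T$.

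The only point needing care is the uniformity of the constant in the exponential. Any $C$ from the proposition is dominated by a single larger $C$ depending only on $\underline a$, since all exponents are nonnegative. The proposition also records the $k=3$ bound with exponent $\int_0^T\|(\Delta a,\Delta b)\|_2^2\,dt$ rather than the fourth power. Its statement lists the fourth power, so we invoke the statement as written. If the squared version were needed instead, we would bound it by $T+\int_0^T\|(\Delta a,\Delta b)\|_2^4\,dt$ using $x^2\le 1+x^4$, which the $T$ term in $\mathcal K_T$ accommodates.
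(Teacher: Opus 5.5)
Your proposal is correct and matches the paper, which presents the corollary as a direct consequence of Proposition \ref{PROPA2-1}. The argument is the same: sum the $L^2$, $\nabla u$, $D^2$, $D^3$ and $\partial_t u$ bounds, dominate every exponential by the largest one, and use $\|(a,b)\|_{L^\infty(Q_T)}\ge\underline a$ to insert the factor $\|(a,b)\|_{L^\infty(Q_T)}^2$; the paper makes this same observation at the end of that proof. You are also right that the squared exponent in the $k=3$ Gr\"onwall display is a typo, since the differential inequality there carries $\|(\Delta a,\Delta b)\|_2^4$, and your fallback $x^2\le 1+x^4$ would cover it anyway.
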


\begin{proposition}
  \label{PROPA2-2}
  Given $u_{0i}\in H^3(\mathbb T^3)$, $f_i\in L^2(0,T; H^2(\mathbb T^3))$, and $a_i,b_i\in L^\infty(Q_T)\cap L^4(0,T; H^2(\mathbb T^3))$, satisfying $a_i\geq\underline a$ and $b_i\geq0$ for some positive number $\underline a$, for $i=1,2$. Assume
  that $u_i\in\mathscr{Z}_T$ is a solution to
  \begin{equation*}
  \left\{
  \begin{array}{l}
    \partial_tu_i-a_i\Delta u_i-b_i\nabla_h\text{div}_hu_i=f_i, \\
    u_i \text{ is periodic in }x,y,z,\\
    u_i|_{t=0}=u_{0i} ,
  \end{array}
  \right.
  \end{equation*}
  for $i=1,2$.   Denote
  $$
  (u,a,b,f,u_0)=(u_1-u_2,a_1-a_2,b_1-b_2,f_1-f_2,u_{01}-u_{02}).
  $$
  Then, there is a positive constant $C$ depending only on $\underline a$, such that
\begin{eqnarray*}
\|u\|_{\mathcal Z_T}^2&\leq& C\mathcal K_T(a_1,b_1)\mathcal K_T(a_2,b_2)\left(\|u_{02}\|_{H^3}^2+ \|f_2\|_{L^2(0,T;H^2)}^2\right)\\
&&\times \left(\|(\Delta a,\Delta b)\|_{L^4(0,T;L^2)}^2+\|(a,b)\|_{ L^\infty(Q_T)}^2\right)\\
  &&+C\mathcal K_T(a_1,b_1)\left(\|u_0\|_{H^3}^2+ \|f\|_{L^2(0,T;H^2)}^2\right),
\end{eqnarray*}
where $\mathcal K_T(a_i,b_i)$ is as in \eqref{KT}.
\end{proposition}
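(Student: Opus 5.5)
The plan is to subtract the two equations and treat the difference $u=u_1-u_2$ as a solution of a linear problem of the same form as in Proposition \ref{PROPA2-1}, with the coefficients $(a_1,b_1)$ and a modified forcing. Direct computation gives
\begin{equation*}
\partial_tu-a_1\Delta u-b_1\nabla_h\text{div}_hu=f+a\Delta u_2+b\nabla_h\text{div}_hu_2=:F,
\end{equation*}
with $u|_{t=0}=u_0$ and periodic in $x,y,z$. Since $a_1\geq\underline a$ and $b_1\geq0$, Corollary \ref{CORA2-1} applies and yields
\begin{equation*}
\|u\|_{\mathscr Z_T}^2\leq C\mathcal K_T(a_1,b_1)\left(\|u_0\|_{H^3}^2+\|F\|_{L^2(0,T;H^2)}^2\right).
\end{equation*}
Because $\|F\|_{L^2(0,T;H^2)}^2\leq 3\|f\|_{L^2(0,T;H^2)}^2+3\|a\Delta u_2+b\nabla_h\text{div}_hu_2\|_{L^2(0,T;H^2)}^2$, the last term of the claimed estimate follows at once. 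The remaining task is to bound the coefficient-difference term.

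The key step is to estimate $\|a\nabla^2u_2\|_{L^2(0,T;H^2)}$, and the same argument applies with $b$ in place of $a$. Expanding $\Delta(a\nabla^2u_2)$ by the Leibniz rule gives three kinds of terms.
\begin{itemize}
\item The term $a\nabla^4u_2$ is bounded in $L^2$ by $\|a\|_{L^\infty}\|\nabla^4u_2\|_2$.
\item The term $\nabla a\,\nabla^3u_2$ is bounded by $\|\nabla a\|_6\|\nabla^3u_2\|_3\leq C\|\Delta a\|_2\|u_2\|_{H^3}^{1/2}\|u_2\|_{H^4}^{1/2}$, using the Sobolev inequality on $\mathbb T^3$. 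Here $\|\nabla a\|_6\le C\|\Delta a\|_2$ holds because constants drop out after differentiation.
\item The term $\Delta a\,\nabla^2u_2$ is bounded by $\|\Delta a\|_2\|\nabla^2u_2\|_\infty\leq C\|\Delta a\|_2\|u_2\|_{H^3}^{1/2}\|u_2\|_{H^4}^{1/2}$.
\end{itemize}
The lower-order parts of the $H^2$ norm are handled in the same way. Squaring and integrating in time, and using Cauchy--Schwarz in time together with Young's inequality, gives
\begin{equation*}
\int_0^T\|a\nabla^2u_2\|_{H^2}^2dt\leq C\left(\|a\|_{L^\infty(Q_T)}^2+\|\Delta a\|_{L^4(0,T;L^2)}^2\right)\left(\sup_t\|u_2\|_{H^3}^2+\int_0^T\|u_2\|_{H^4}^2dt\right).
\end{equation*}
In this step the factor $\|\Delta a\|_2^2\|u_2\|_{H^3}\|u_2\|_{H^4}$ is integrated as $\|\Delta a\|_{L^4L^2}^2\big(\int\|u_2\|_{H^3}^2\|u_2\|_{H^4}^2\big)^{1/2}$. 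It is then bounded by $\|\Delta a\|_{L^4L^2}^2\sup\|u_2\|_{H^3}\|u_2\|_{L^2H^4}$, which is at most the $\mathscr Z_T$-norm squared.

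Finally, the $\mathscr Z_T$-norm of $u_2$ is controlled by Corollary \ref{CORA2-1} applied to $u_2$ itself, which gives $C\mathcal K_T(a_2,b_2)(\|u_{02}\|_{H^3}^2+\|f_2\|_{L^2(0,T;H^2)}^2)$. Combining everything produces exactly the stated bound. The only place requiring care is the product estimate above, where one must distribute derivatives so that $a$ appears only through $\|a\|_{L^\infty}$ and $\|\Delta a\|_{L^4L^2}$, and never through higher norms. For this reason $\nabla a$ is placed in $L^6$ and $\Delta a$ in $L^2$, with all other regularity carried by $u_2$.
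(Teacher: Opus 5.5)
Your proposal is correct and follows the paper's proof: you write the difference equation with coefficients $(a_1,b_1)$ and forcing $F=f+a\Delta u_2+b\nabla_h\text{div}_hu_2$, apply Corollary~\ref{CORA2-1} to both $u$ and $u_2$, and bound $\|F\|_{L^2(0,T;H^2)}$ by the same product estimates as in \eqref{ESTDTU}--\eqref{ESTDTU2}. Your explicit product estimate (with $\nabla a$ in $L^6$, $\Delta a$ in $L^2$, and H\"older in time applied to $\|\Delta a\|_2^2\|u_2\|_{H^3}\|u_2\|_{H^4}$) spells out the step the paper only cites, and it correctly gives the squared $\|(a,b)\|_{L^\infty(Q_T)}^2$ that appears in the statement.
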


\begin{proof}
By Corollary \ref{CORA2-1}, it holds that
\begin{equation}
\|u_i\|_{\mathcal Z_T}^2
 \leq  C\left(\|u_{0i}\|_{H^3}^2+ \|f_i\|_{L^2(0,T;H^2)}^2\right)\mathcal K_T{(a_i,b_i)} , \label{012201}
\end{equation}
 for $i=1,2$.
Note that
\begin{eqnarray*}
  \partial_tu-a_1\Delta u-b_1\nabla_h\text{div}_hu=F,\\
  u|_{t=0}=u_0,
\end{eqnarray*}
where $F:=f+a\Delta u_2+b\nabla_h\text{div}_hu_2.$
Then, by Corollary \ref{CORA2-1}, one gets
\begin{equation}
\|u\|_{\mathcal Z_T}^2
 \leq C\left(\|u_0\|_{H^3}^2+ \|F\|_{L^2(0,T;H^2)}^2\right)\mathcal K_T(a_1,b_1).\label{012202}
\end{equation}
By means of the same arguments as in \eqref{ESTDTU}, \eqref{ESTDTU1}, and \eqref{ESTDTU2}, it follows that
\begin{eqnarray*}
  \|F\|_{L^2(0,T;H^2)}^2 &\leq&
    C\left(\|(\Delta a,\Delta b)\|_{L^4(0,T;L^2)}^2+\|(a,b)\|_{ L^\infty(Q_T)}\right)\\
    &&\times \| u_2\|_{L^\infty(0,T;H^3)\cap L^2(0,T;H^4)}^2 + C\|f\|_{L^2(0,T;H^2)}^2 ,
\end{eqnarray*}
from which, by \eqref{012201}, one gets
\begin{eqnarray*}
  \|F\|_{L^2(0,T;H^2)}^2
  &\leq& C\|f\|_{L^2(0,T;H^2)}^2+ C\left(\|u_{02}\|_{H^3}^2+ \|f_2\|_{L^2(0,T;H^2}^2\right)\mathcal K_T (a_2,b_2)\\
  &&\times \left(\|(\Delta a,\Delta b)\|_{L^4(0,T;L^2)}^2+\|(a,b)\|_{ L^\infty(Q_T)}\right).
\end{eqnarray*}
Substituting this into \eqref{012202} leads to the conclusion.
\end{proof}

\begin{corollary}
  \label{CORA2-2}
  Given $v_{0i}\in H^3(\mathcal O)$, $f_i\in L^2(0,T; H^2(\mathcal O))$, and $\sigma_i\in L^\infty(\mathcal O\times(0,T))\cap L^4(0,T; H^2(\mathcal O))$, satisfying $\sigma\geq0.5\underline\sigma$ for some positive number $\underline\sigma$, and
\begin{equation}\label{BDYIN}
\partial_zv_{0i}|_{z=0,1}=\partial_zf_i|_{z=0,1}=0, \quad \partial_z\sigma_i|_{z=0,1}=0,\quad i=1,2.
\end{equation}
For $i=1,2$, assume that $V_i\in\mathscr{Y}_T$ satisfies
  \begin{equation*}
  \left\{
  \begin{array}{l}
    \partial_tV_i-\mu\sigma_i\Delta V_i-(\mu+\lambda)\sigma_i\nabla_h\text{div}_hV_i=f_i,\\
    V_i \text{ is  periodic in }x,y,\quad \partial_zV_i|_{z=0,1}=0,\\
    V_i|_{t=0}=v_{0i}.
    \end{array}
    \right.
  \end{equation*}
Denote
$$
(V,\sigma,f,v_0)=(V_1-V_2, \sigma_1-\sigma_2, f_1-f_2, v_{01}-v_{02}).
$$
Then, there is a positive constant $C$ depending only on $\mu, \lambda,$ and $\underline\sigma$, such that
\begin{equation*}
\|V_i\|_{\mathscr{Y}_T}^2
 \leq C\left(\|v_{0i}\|_{H^3(\mathcal O)}^2+\|f_i\|_{L^2(0,T;H^2(\mathcal O))}^2\right)\mathcal L_T(\sigma_i),\quad i=1,2,
\end{equation*}
and
\begin{eqnarray*}
\|V\|_{\mathscr Y_T}^2
&\leq& C\mathcal L_T(\sigma_1)\mathcal L_T(\sigma_2)\left(\|v_{02}\|_{H^3}^2+ \|f_2\|_{L^2(0,T;H^2)}^2\right)\\
&&\times \left(\|\Delta\sigma\|_{L^4(0,T;L^2)}^2+\|\sigma\|_{ L^\infty(\mathcal O\times(0,T))}^2\right)\\
  &&+C\mathcal L_T(\sigma_1)\left(\|v_0\|_{H^3}^2+ \|f\|_{L^2(0,T;H^2)}^2\right),
\end{eqnarray*}
where
\begin{eqnarray}
\mathcal L_T(\sigma_i):=Ce^{C\left(T+\int_0^T\|\Delta\sigma_i\|_2^4dt\right)}\|\sigma_i\|_{L^\infty(\mathcal O\times(0,T))}^2. \label{LT}
\end{eqnarray}
\end{corollary}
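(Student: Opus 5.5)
We reduce Corollary \ref{CORA2-2} to Propositions \ref{PROPA2-1}--\ref{PROPA2-2} and Corollary \ref{CORA2-1} by even reflection in $z$. For a function $g$ on $\mathcal O=\mathbb T^2\times(0,1)$, define its even extension $g^e$ to $\mathbb T^2\times(-1,1)$ by $g^e(x,y,z)=g(x,y,-z)$ for $z\in(-1,0)$. Then extend $2$-periodically in $z$, which identifies it with a function on a three-dimensional torus $\mathbb T^3$ of vertical period $2$. (Rescaling the period only changes constants; all estimates in Proposition \ref{PROPA2-1} hold verbatim on such a torus.) We apply this extension to $V_i$, $\sigma_i$, $f_i$ and $v_{0i}$.

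The key points to check are as follows.

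\emph{Regularity of the extension.} For $g\in H^k(\mathcal O)$ with $k\le 4$, the even extension lies in $H^k(\mathbb T^3)$ with $\|g^e\|_{H^k(\mathbb T^3)}\le C\|g\|_{H^k(\mathcal O)}$, provided the odd-order $z$-derivatives of order $<k$ vanish at $z=0,1$ in the trace sense. For $H^2$ (needed for $\sigma_i$ and $f_i$) this is exactly $\partial_z g|_{z=0,1}=0$, which \eqref{BDYIN} provides. For $V_i\in H^4$ and $v_{0i}\in H^3$ we additionally need $\partial_z^3V_i|_{z=0,1}=0$. This follows from the equation: since $\sigma_i$, $f_i$ and $V_i$ are even-compatible, we can equivalently argue by uniqueness. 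Solve the periodic problem on $\mathbb T^3$ with the extended data $a=\mu\sigma_i^e$, $b=(\mu+\lambda)\sigma_i^e$ (here $b\ge0$ requires $\mu+\lambda\ge0$, which holds), $f=f_i^e$ and $u_0=v_{0i}^e$. Reflection symmetry of the coefficients and data, together with uniqueness (from the $L^2$ estimate of Proposition \ref{PROPA2-1} applied to differences), shows the periodic solution is even in $z$. Its restriction to $\mathcal O$ solves the Neumann problem and therefore coincides with $V_i$, again by uniqueness via an $L^2$ energy estimate on $\mathcal O$, where the boundary terms vanish because $\partial_zV_i|_{z=0,1}=0$. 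Consequently $\|V_i\|_{\mathscr Y_T}$ is comparable to $\|V_i^e\|_{\mathscr Z_T}$. Norms of the extended data are bounded by twice those on $\mathcal O$, and the sup norms are equal. The vector structure causes no trouble: $\nabla_h\text{div}_h$ involves only horizontal derivatives and commutes with even reflection.

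\emph{Applying the estimates.} With $a_i=\mu\sigma_i^e\ge 0.5\mu\underline\sigma=:\underline a$ and $b_i=(\mu+\lambda)\sigma_i^e$, Corollary \ref{CORA2-1} gives the first bound, with $\mathcal K_T(a_i,b_i)\le \mathcal L_T(\sigma_i)$ after adjusting $C$ (depending on $\mu,\lambda,\underline\sigma$), since $\|\Delta(\sigma_i^e)\|_{L^2(\mathbb T^3)}^2=2\|\Delta\sigma_i\|_{L^2(\mathcal O)}^2$. Proposition \ref{PROPA2-2}, applied with $a=\mu\sigma^e$ and $b=(\mu+\lambda)\sigma^e$, yields the difference estimate in the same way. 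The only discrepancy is a factor $\|(a,b)\|_{L^\infty}$ versus its square in the proof of Proposition \ref{PROPA2-2}. Since the stated conclusion of that proposition carries the square, we use the stated form.

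The main obstacle is justifying that $\Delta(\sigma_i^e)\in L^2$ with no singular part on $\{z=0\}$, i.e.\ that the even extension of an $H^2$ function with vanishing Neumann trace is $H^2$ across the reflection plane. This holds because the jump of $\partial_z$ across the plane is $2\partial_z\sigma_i|_{z=0}=0$. The same argument at third order, using $\partial_z^3V_i|_{z=0,1}=0$ obtained above, gives the $H^4$ regularity of $V_i^e$.
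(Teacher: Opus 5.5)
Your proposal is correct and follows essentially the same route as the paper: extend the data evenly and periodically in $z$, solve the periodic problem on $\mathbb T^3$, and use uniqueness twice. The first use shows the periodic solution is even in $z$, hence $\partial_z V_i^{\text{ext}}|_{z=0,1}=0$; the second shows it coincides with $V_i$ on $\mathcal O$, after which the bounds follow from Corollary~\ref{CORA2-1} and Proposition~\ref{PROPA2-2}. Your extra checks fill in details the paper leaves implicit: that the Neumann conditions in \eqref{BDYIN}, together with $\partial_z^3V_i|_{z=0,1}=0$, place the extensions in the right Sobolev spaces without singular parts on the reflection planes, and that $\mathcal K_T(a_i,b_i)\le\mathcal L_T(\sigma_i)$ after adjusting $C$.
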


\begin{proof}
For $i=1,2$, extend $v_{0i}, \sigma_i,$ and $f_i$ periodically and evenly with respect to $z$, and denote, respectively, by $v_{0i}^\text{ext}, \sigma_i^\text{ext},$ and $f_i^\text{ext}$ the corresponding functions after extension. Then, by \eqref{BDYIN}, one can verify that $v_{0i}^\text{ext}\in H^3(\mathbb T^3), \sigma_i^\text{ext}\in L^4(0,T; H^2(\mathbb T^3))\cap L^\infty(Q_T),$ and $f_i^\text{ext}\in L^2(0,T; H^2(\mathbb T^3))$. Denote by
$V_i^\text{ext}$ as the unique solution to
  \begin{eqnarray*}
    \partial_tV_i^\text{ext}-\mu\sigma_i^\text{ext}\Delta V_i^\text{ext}-(\mu+\lambda)\sigma_i^\text{ext}\nabla_h\text{div}_hV_i^\text{ext}=f_i^\text{ext},\\
    V_i^\text{ext} \text{ is  periodic in }x,y,z,\quad     V_i^\text{ext}|_{t=0}=v_{0i}^\text{ext}.
  \end{eqnarray*}
Since $v_{0i}^\text{ext}, \sigma_i^\text{ext},$ and $f_i^\text{ext}$ are even with respect to $z$, one can show that $V_i^\text{ext}(x,y,-z,t)$ satisfies the same system as $V_i^\text{ext}$ and, as a result, by the uniqueness of solutions, $V_i^{\text{ext}}$ is even in $z$. Then, $\partial_zV_i^\text{ext}$ is odd with respect to $z$ which, by the periodicity in $z$, implies $\partial_zV_i^{\text{ext}}|_{z=0,1}=0$. Therefore, $V_i^\text{ext}|_{\mathcal O}=V_i$ by the uniqueness of solutions. Then, the estimates for $V_i$ and $V_1-V_2$ follows from those for $V_i^\text{ext}$ and $V_1^\text{ext}-V_2^\text{ext}$ guaranteed by Corollary \ref{CORA2-1} and Proposition \ref{PROPA2-2}.
\end{proof}

\begin{corollary}
  \label{CORA2-3}
  Given $\Sigma_{0i}\in H^3(\mathcal O)$, $g_i\in L^2(0,T; H^2(\mathcal O))$, and $\sigma_i\in L^\infty(\mathcal O\times(0,T))\cap L^4(0,T; H^2(\mathcal O))$, satisfying $\sigma\geq0.5\underline\sigma$ for some positive number $\underline\sigma$, and
\begin{equation*}
\partial_z\Sigma_{0i}|_{z=0,1}=\partial_zg_i|_{z=0,1}=\partial_z\sigma_i|_{z=0,1}=0,\quad i=1,2.
\end{equation*}
For $i=1,2$, assume that $\Sigma_i\in\mathscr{Y}_T$ satisfies
  \begin{equation*}
    \left\{
    \begin{array}{l}\partial_t\Sigma_i-\nu\sigma_i\partial_z^2\Sigma_i-\epsilon\Delta_h\Sigma_i=g_i,\\
    \Sigma_i \text{ is  periodic in }x,y,\quad \partial_z\Sigma_i|_{z=0,1}=0,\\
    \Sigma_i|_{t=0}=\Sigma_{0i}.\nonumber
    \end{array}
    \right.
  \end{equation*}
Denote
$$
(\Sigma,\sigma,g,\Sigma_0)=(\Sigma_1-\Sigma_2, \sigma_1-\sigma_2, g_1-g_2, \Sigma_{01}-\Sigma_{02}).
$$
Then, there is a positive constant $C$ depending only on $\nu, \epsilon,$ and $\underline\sigma$, such that
  \begin{equation*}
 \|\Sigma_i\|_{\mathscr{Y}_T}^2
 \leq C\mathcal L_T(\sigma_i)\left(\|\Sigma_{0i}\|_{H^3(\mathcal O)}^2+\|g_i\|_{L^2(0,T;H^2(\mathcal O))}^2\right),\quad i=1,2,
\end{equation*}
and
\begin{eqnarray*}
\|\Sigma\|_{\mathscr Y_T}^2
&\leq& C\mathcal L_T(\sigma_1)\mathcal L_T(\sigma_2)\left(\|\Sigma_{02}\|_{H^3}^2+ \|g_2\|_{L^2(0,T;H^2)}^2\right) \\
&&\times \left(\|\Delta\sigma\|_{L^4(0,T;L^2)}^2+\|\sigma\|_{ L^\infty(\mathcal O\times(0,T))}^2\right)\\
  &&+C\mathcal L_T(\sigma_1)\left(\|\Sigma_0\|_{H^3}^2+ \|g\|_{L^2(0,T;H^2)}^2\right),
\end{eqnarray*}
where $\mathcal L_T(\sigma_i)$ is expressed as in \eqref{LT}, but with the constant $C$ there is now depending on $\nu, \epsilon,$ and $\underline\sigma$.
\end{corollary}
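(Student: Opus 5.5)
The plan is to mirror the proof of Corollary \ref{CORA2-2}: reduce to a fully periodic problem on $\mathbb T^3$ by even reflection in $z$, and then establish for the anisotropic operator $\partial_t-\nu a\partial_z^2-\epsilon\Delta_h$ (with $a\ge 0.5\underline\sigma$) the analogues of Proposition \ref{PROPA2-1}, Corollary \ref{CORA2-1} and Proposition \ref{PROPA2-2}.

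\textbf{Step 1: reduction to $\mathbb T^3$.} Extend $\Sigma_{0i}$, $g_i$ and $\sigma_i$ evenly in $z$ to $(-1,1)$ and then periodically. Because their $z$-derivatives vanish at $z=0,1$, the extensions stay in $H^3$, $L^2(0,T;H^2)$ and $L^\infty\cap L^4(0,T;H^2)$, respectively. If $u(x,y,z,t)$ solves the extended problem, then so does $u(x,y,-z,t)$, since $\partial_z^2$ and $\Delta_h$ commute with the reflection. Uniqueness of the periodic solution then shows $u$ is even in $z$, so $\partial_z u|_{z=0,1}=0$ and $u|_{\mathcal O}=\Sigma_i$.

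\textbf{Step 2: energy estimates on $\mathbb T^3$.} For $\partial_tu-\nu a\partial_z^2u-\epsilon\Delta_hu=f$, I would test with $u$, $-\Delta u$, $\Delta^2u$, and apply $\Delta$ and test with $-\Delta^2u$, exactly as in Proposition \ref{PROPA2-1}. The dissipation now comes from $\min\{0.5\nu\underline\sigma,\epsilon\}$ times the full gradient. For example, at top order
\[
-\int \nu a\,\partial_z^2\nabla\Delta u\cdot\nabla\Delta u=\nu\int a|\partial_z\nabla\Delta u|^2+\nu\int\partial_za\,\partial_z\nabla\Delta u\cdot\nabla\Delta u ,
\]
and together with $\epsilon\|\nabla_h\nabla\Delta u\|_2^2$ this controls $\|\nabla^4u\|_2^2$ from below, since on the torus the two pieces add up to the full gradient norm. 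The commutator terms $[\Delta,a]\partial_z^2u$ and $\nabla a\cdot\partial_z\nabla^k u$ are bounded by $\|\Delta a\|_2\|\nabla^2u\|_\infty$ and $\|\nabla a\|_6\|\nabla^3u\|_3$. Gagliardo--Nirenberg and Young then give the Gr\"onwall factor $e^{C\int_0^T\|\Delta a\|_2^4dt}$. The time derivative is read off from the equation, giving the factor $\|a\|_{L^\infty}^2$; the $\epsilon$ part only contributes a constant. This produces the estimate with $\mathcal L_T(\sigma_i)$, where $C$ depends on $\nu$, $\epsilon$ and $\underline\sigma$.

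\textbf{Step 3: the difference.} $\Sigma=\Sigma_1-\Sigma_2$ solves the same type of equation with coefficient $\sigma_1$ and right-hand side $G=g+\nu\sigma\partial_z^2\Sigma_2$. As in Proposition \ref{PROPA2-2}, bound $\|\sigma\partial_z^2\Sigma_2\|_{L^2(0,T;H^2)}^2$ by
\[
\bigl(\|\Delta\sigma\|_{L^4(0,T;L^2)}^2+\|\sigma\|_{L^\infty}^2\bigr)\,\|\Sigma_2\|_{L^\infty(0,T;H^3)\cap L^2(0,T;H^4)}^2 ,
\]
using the same product estimates as in \eqref{ESTDTU2}. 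Then insert the Step 2 bound for $\Sigma_2$.

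\textbf{Main obstacle.} The only real issue is the lower-bound coercivity in Step 2. Because the diffusion is degenerate-coefficient anisotropic, the lower-order terms must be absorbed using only $\partial_z$- and $\nabla_h$-dissipation. I expect this to be manageable because $\epsilon>0$ is fixed, but it is why the constants depend on $\epsilon$.
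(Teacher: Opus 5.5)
Your proposal is correct and follows essentially the same route as the paper. The paper's proof is a one-line reference to repeating Corollary \ref{CORA2-2}: even reflection in $z$, then the energy estimates of Proposition \ref{PROPA2-1} for $\partial_t-\nu\sigma\partial_z^2-\epsilon\Delta_h$ (where, as you note, $\nu\sigma|\partial_z\cdot|^2$ and $\epsilon|\nabla_h\cdot|^2$ combine into full-gradient dissipation, whence the $\epsilon$-dependence of $C$), and then the difference argument of Proposition \ref{PROPA2-2} with forcing $g+\nu\sigma\partial_z^2\Sigma_2$; you have supplied the details it leaves to the reader.
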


\begin{proof}
  This is proved in a similar way as in Corollary \ref{CORA2-2} by going through the arguments in Proposition \ref{PROPA2-1}, Corollary \ref{CORA2-1}, Proposition \ref{PROPA2-2}, and Corollary \ref{CORA2-2}.
\end{proof}

\subsection{Solvability of the nonlinear system}
Based on the results obtained in the previous two subsections, we are now at the position to prove the local existence to the original nonlinear system. We employ the contracting mapping principle.
Let $M_0$ and $T_0$ be two positive constants to be determined later and denote
\begin{align*}
\mathscr B_0:=\Big\{&(v,\sigma,p)\Big|\|(v,\sigma,p)\|_{\mathscr Y_{T_0}\times\mathscr Y_{T_0}\times\mathscr X_{T_0}}\leq M_0, (v,\sigma,p)|_{t=0}=(v_0,\sigma_0,p_0)\Big\}.
\end{align*}
Recalling the definition of the mapping $\mathfrak F$, it suffices to show the existence of fixed point in $\mathscr B_0$ to $\mathfrak F$, which is guaranteed by the following proposition.

\begin{proposition}
  \label{PROPA3-1}
Let $\mathfrak F$ be the mapping defined by \eqref{DEFF}. Then, there are two positive constants $M_0\geq1$ and $T_0\leq1$ depending only on $\|(v_0,\sigma_0)\|_{H^3(\mathcal O)}+\|p_0\|_{H^3(\mathbb T^2)}$, $\underline\sigma, \underline p$, and the parameters $\mu, \lambda, \nu, \gamma, \epsilon$, such that $\mathfrak F$ is a contracting mapping from $\mathscr B_0$ to itself and, thus, there is a unique fixed point $(v_\text{fix}, v_\text{fix}, p_\text{fix})$ to $\mathfrak F$ in $\mathscr B_0$, which by the definition of $\mathfrak F$, is a solution to system (\ref{EQv})--(\ref{EQp}),
  subject to (\ref{BC1})--(\ref{IC}).
\end{proposition}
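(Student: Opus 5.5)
We apply the Banach contraction principle to $\mathfrak F$ on $\mathscr B_0$, equipped with the norm of $\mathscr Y_{T_0}\times\mathscr Y_{T_0}\times\mathscr X_{T_0}$. Since $\mathscr B_0$ is a closed subset of a Banach space, it suffices to verify two things: that $\mathfrak F$ maps $\mathscr B_0$ into itself, and that it is a contraction there, for suitably chosen $M_0$ and $T_0$.

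\textbf{Positivity.} First we check that for $(v,\sigma,p)\in\mathscr B_0$ with $T_0$ small the lower bounds $\sigma\geq0.5\underline\sigma$ and $p\geq0.5\underline p$ hold automatically. Indeed, by the fundamental theorem of calculus and the embedding $H^2\hookrightarrow L^\infty$,
$$\|\sigma(t)-\sigma_0\|_\infty\leq C\int_0^t\|\partial_t\sigma\|_{H^2}ds\leq C\sqrt{t}M_0,$$
and similarly for $p$. Choosing $T_0\leq(\min\{\underline\sigma,\underline p\}/(2CM_0))^2$ gives both bounds. Consequently Proposition \ref{PROPA1-1} applies with constant $C=C(M_0)$, so that $\|N_j\|_{L^2(0,T_0;H^2)}^2\leq T_0C(M_0)$. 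Moreover $\|\sigma\|_{L^\infty(\mathcal O\times(0,T_0))}$ and $\int_0^{T_0}\|\Delta\sigma\|_2^4dt\leq T_0M_0^4$ are controlled.

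\textbf{Boundary conditions.} Before invoking Corollaries \ref{CORA2-2} and \ref{CORA2-3} we check the compatibility conditions \eqref{BDYIN}, namely $\partial_zN_1|_{z=0,1}=\partial_zN_2|_{z=0,1}=0$. This follows from $\partial_zv=\partial_z\sigma=0$ and $w=0$ on $z=0,1$, together with $\partial_z\phi|_{z=0,1}=0$ as in \eqref{BCphi2}. The only subtle term is $\partial_z(w\partial_zv)=\partial_zw\,\partial_zv+w\partial_z^2v$, which vanishes on the boundary because both $\partial_zv$ and $w$ vanish there. The $P$ equation \eqref{AEQP} is a heat equation on $\mathbb T^2$ and poses no issue.

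\textbf{Self-mapping.} Corollaries \ref{CORA2-2}, \ref{CORA2-3} and the corresponding heat estimate give
$$\|\mathfrak F(v,\sigma,p)\|^2\leq C\mathcal L_{T_0}(\sigma)\bigl(\|(v_0,\sigma_0)\|_{H^3}^2+\|p_0\|_{H^3}^2+T_0C(M_0)\bigr).$$
For $T_0\leq1$ small, $\mathcal L_{T_0}(\sigma)\leq Ce^{C(1+T_0M_0^4)}\|\sigma\|_\infty^2$. Using the $L^\infty$ bound from the positivity step, $\|\sigma\|_\infty\leq\|\sigma_0\|_\infty+1$, so this is bounded by a constant depending only on the data. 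Hence we first fix $M_0^2:=2C_0(\|(v_0,\sigma_0)\|_{H^3}^2+\|p_0\|_{H^3}^2+1)$, with $C_0$ a data-dependent constant, and then shrink $T_0$ so that $T_0C(M_0)\leq1$ and $T_0M_0^4\leq1$. This yields $\mathfrak F(\mathscr B_0)\subset\mathscr B_0$.

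\textbf{Contraction.} For two elements of $\mathscr B_0$ we use the difference estimates of Corollaries \ref{CORA2-2} and \ref{CORA2-3}. Since the initial data coincide, only two types of terms appear. The first is the coefficient difference
$$\|\Delta\sigma\|_{L^4(0,T_0;L^2)}^2+\|\sigma\|_{L^\infty}^2,\qquad \sigma=\sigma_1-\sigma_2 .$$
Because $\sigma$ vanishes at $t=0$, we have $\|\sigma\|_{L^\infty}\leq C\sqrt{T_0}\|\partial_t\sigma\|_{L^2H^2}$ and $\|\Delta\sigma\|_{L^4L^2}^2\leq\sqrt{T_0}\sup_t\|\sigma\|_{H^2}^2$. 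The second is the forcing difference, which Proposition \ref{PROPA1-1} bounds by
$$\|N_j(1)-N_j(2)\|_{L^2H^2}^2\leq T_0C(M_0)\|\text{difference}\|_{L^\infty H^3}^2 .$$
Both types thus carry a factor $\sqrt{T_0}$ or $T_0$ times $C(M_0)$, so after further shrinking $T_0$ the map $\mathfrak F$ has Lipschitz constant $\leq1/2$, and the contraction principle gives the unique fixed point.

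The main obstacle is the order of choosing the constants: $M_0$ must be fixed independently of $T_0$, and this requires that $\mathcal L_{T_0}$ be bounded uniformly for $T_0\leq1$. That uniformity is exactly why the positivity and $L^\infty$ control in the first step must come from the time-derivative bounds contained in the $\mathscr Y$ norm, and not from $M_0$ directly.
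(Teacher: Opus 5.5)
Your proposal is correct and follows essentially the same route as the paper's proof: you get the lower bounds and $L^\infty$ control of $\sigma,p$ from the time-derivative part of the norm with $CM_0T_0^{1/2}$ small, bound $\mathcal L_{T_0}(\sigma)$ uniformly via $M_0^4T_0\le 1$, fix $M_0$ from the data before shrinking $T_0$, and obtain the contraction from the $T_0^{1/2}$ factors in the coefficient and forcing differences (Proposition \ref{PROPA1-1}, Corollaries \ref{CORA2-2} and \ref{CORA2-3}). Your explicit check that $\partial_z N_1$ and $\partial_z N_2$ vanish at $z=0,1$ is needed for the even-extension argument behind those corollaries, and the paper leaves it implicit.
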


\begin{proof}
Denote $X_0=\|(u_0, \sigma_0)\|_{H^3(\mathcal O)}+\|p_0\|_{H^3(\mathbb T^2)}$ and let $M_0$ be a constant to be determined later.
Throughout the proof of this proposition, we use $C_{M_0}$ and $C_{X_0}$ to denote general constants depending on $M_0$ and $X_0$, respectively, which may depend also on $\mu, \lambda, \nu, \gamma, \epsilon, \underline\sigma$, and $\underline p$, while we use $C$ to denote a general constant depending only on $\mu, \lambda, \nu, \gamma, \epsilon, \underline\sigma$, and $\underline p$.

By the Sobolev and H\"older inequalities, it holds for any $t\in (0,T_0)$ that
\begin{eqnarray}
  \left\|\sigma-\sigma_0\right\|_\infty&=&\left\|\int_0^t\partial_t\sigma ds\right\|_\infty
  \leq\int_0^{T_0}\|\partial_t\sigma\|_\infty dt\leq C\int_0^{T_0}\|\partial_t\sigma\|_{H^2}dt\nonumber\\
  &\leq& CT_0^\frac12\left(\int_0^{T_0}\|\partial_t\sigma\|_{H^2}^2dt\right)^\frac12
  \leq CM_0T_0^\frac12\leq0.5\underline\sigma,\label{ESTSIGMAINFTY}
\end{eqnarray}
and similarly
\begin{eqnarray*}
  \left\|p-p_0\right\|_\infty
  \leq0.5\underline p,
\end{eqnarray*}
for any $(v,\sigma,p)\in\mathscr B_0$, by choosing $T_0$ sufficiently small depending only on $M_0$, $\underline\sigma$, and $\underline p$. Thanks to this,
recalling that $\sigma_0\geq\underline\sigma$ and $p\geq\underline p$, and by the Sobolev embedding inequality, it follows that
\begin{equation}
  0.5\underline\sigma\leq\sigma\leq0.5\underline\sigma+\|\sigma_0\|_\infty \leq C_{X_0},\quad p\geq0.5\underline p, \quad \forall (v,\sigma,p)\in\mathscr B_0. \label{ESTSIGMA}
\end{equation}
Then, recalling the expression of $\mathcal L_{T_0}(\sigma)$ in \eqref{LT}, one deduces
\begin{eqnarray}
  \mathcal L_{T_0}(\sigma)&\leq&Ce^{C\left(T_0+\int_0^{T_0}\|\Delta\sigma\|_2^4dt\right)}\|\sigma\|_{L^\infty(\mathcal O\times(0,T_0))}\nonumber\\
  &\leq&Ce^{C\left(1+M_0^4T_0\right)}C_{X_0}
  \leq C_{X_0},\quad \forall (v,\sigma,p)\in\mathscr B_0, \label{ESTLT}
\end{eqnarray}
by choosing $T_0$ sufficiently small such that $M_0^4T_0\leq1$.

For any $(v_i, \sigma_i, p_i)\in\mathscr B_0$, $i=1,2$, noticing that $(\sigma_1-\sigma_2)|_{t=0}=0$, it follows in the same way as \eqref{ESTSIGMAINFTY} that
\begin{eqnarray}
  \|\sigma_1-\sigma_2\|_{L^\infty(\mathcal O\times(0,T_0))}&\leq& CT_0^\frac12\left(\int_0^{T_0}\|\partial_t(\sigma_1-\sigma_2)\|_{H^2}^2dt\right)^\frac12\nonumber\\
  &\leq& CT_0^\frac12\|\sigma_1-\sigma_2\|_{\mathscr Y_{T_0}}. \label{ESTSIGMAINFTYDIFF}
\end{eqnarray}
By Proposition \ref{PROPA1-1} and the H\"older inequality, one concludes that
\begin{eqnarray}
     &&\sum_{j=1}^2\|N_j(v_1,\sigma_1,p_1)-N_j(v_2,\sigma_2,p_2)\|_{L^2(0,T_0; H^2(\mathcal O))}\nonumber\\
     &&+\|N_3(v_1,p_1)-N_3(v_2,p_2)\|_{L^2(0,T_0; H^2(\mathbb T^2))}\nonumber\\
     &\leq&T_0^\frac12\sum_{j=1}^2\|N_j(v_1,\sigma_1,p_1)-N_j(v_2,\sigma_2,p_2)\|_{L^\infty(0,T_0; H^2(\mathcal O))}\nonumber\\
     &&+T_0^\frac12\|N_3(v_1,p_1)-N_3(v_2,p_2)\|_{L^\infty(0,T_0; H^2(\mathbb T^2))}\nonumber\\
     &\leq&C_{M_0}T_0^\frac12\left(\|(v_1-v_2,\sigma_1-\sigma_2)\|_{L^\infty(0,T_0;H^3(\mathcal O))}+\|p_1-p_2\|_{L^\infty(0,T_0;H^3(\mathbb T^2))}\right)\nonumber\\
     &\leq&C_{M_0}T_0^\frac12\|(v_1-v_2,\sigma_1-\sigma_2,p_1-p_2)\|_{\mathscr Y_{T_0}\times\mathscr Y_{T_0}\times
     \mathscr X_{T_0}} ,\label{ESTNDIFF}
\end{eqnarray}
   and
   \begin{eqnarray}\label{ESTNSUM}
   &&\sum_{j=1}^2\|N_j(v,\sigma,p)\|_{L^2(0,T_0; H^2(\mathcal O))}+\|N_3(v,p)\|_{L^2(0,T_0; H^2(\mathbb T^2))}\nonumber\\
   &\leq&T_0^\frac12\sum_{j=1}^2\|N_j(v,\sigma,p)\|_{L^\infty(0,T_0; H^2)}+T_0^\frac12\|N_3(v,p)\|_{L^\infty(0,T_0; H^2)}
   \leq C_{M_0}T_0^\frac12,
   \end{eqnarray}
for any $(v,\sigma, p), (v_i, \sigma_i, p_i)\in\mathscr B_0$, $i=1,2$.

For $(v,\sigma, p)\in\mathscr B_0$, denote
$(V,\Sigma,P)=\mathfrak F(v,\sigma,p)$ which, by the definition of $\mathfrak F$, is a solution to
\eqref{AEQV}--\eqref{AEQP}, subject to \eqref{ABC}--\eqref{AIC}.
Recalling \eqref{ESTSIGMA}, \eqref{ESTLT}, and \eqref{ESTNSUM},
it follows from Corollary \ref{CORA2-2}, Corollary \ref{CORA2-3}, and the standard parabolic estimates that
\begin{eqnarray*}
\|\mathfrak F(v,\sigma,p)\|_{\mathscr Y_{T_0}\times\mathscr Y_{T_0}\times\mathscr X_{T_0}}
&=&\|(V,\Sigma,P)\|_{\mathscr Y_{T_0}\times\mathscr Y_{T_0}\times\mathscr X_{T_0}}\nonumber\\
  &\leq& C\mathcal L_{T_0}(\sigma)\left(\|v_0\|_{H^3(\mathcal O)}+\|N_1(v,\sigma,p)\|_{L^2(0,T_0;H^2(\mathcal O))}\right)\nonumber\\
 &&+C\mathcal L_{T_0}(\sigma)\left(\|\sigma_0\|_{H^3(\mathcal O)}+\|N_2(v,\sigma,p)\|_{L^2(0,T_0;H^2(\mathcal O))}\right)\nonumber\\
 &&+ C(\|p_0\|_{H^3(\mathbb T^2)}+\|N_3(v,p)\|_{L^2(0,T_0;H^2(\mathbb T^2))})\nonumber\\
  &\leq& C_{X_0}\left(X_0+C_{M_0}T_0^\frac12\right)\leq C_{X_0},
\end{eqnarray*}
by choosing $T_0$ sufficiently small such that $C_{M_0}T_0^\frac12\leq1$. Thanks to this, by choosing $M_0\geq C_{X_0}$, $\mathfrak F$ is a mapping from $\mathscr B_0$ to itself.

For $(v_i,\sigma_i,p_i)\in\mathscr B_0$, denote $(V_i,\Sigma_i,P_i)=\mathfrak F(v_i,\sigma_i,p_i)$, $i=1,2$, and set   $(V,\Sigma,P)=(V_1-V_2,\Sigma_1-\Sigma_2,P_1-P_2)$. Then,
recalling \eqref{ESTSIGMA}, by Corollary \ref{CORA2-2} and Corollary \ref{CORA2-3}, it follows from \eqref{ESTLT}--\eqref{ESTNSUM} and the H\"older inequality that
\begin{eqnarray*}
  &&\|\mathfrak F(v_1,\sigma_1,p_1)-\mathfrak F(v_2,\sigma_2,p_2)\|_{\mathscr Y_{T_0}\times\mathscr Y_{T_0}\times\mathscr X_{T_0}}\\
  &=&\|V_1-V_2\|_{\mathscr Y_{T_0}}+\|\Sigma_1-\Sigma_2\|_{\mathscr Y_{T_0}}+\|p_1-p_2\|_{\mathscr X_{T_0}}\\
  &\leq&C\mathcal L_{T_0}^\frac12(\sigma_1)\mathcal L_{T_0}^\frac12(\sigma_2)(\|v_0\|_{H^3}+\|N_1(v_2,\sigma_2,p_2)\|_{L^2(0,T_0;H^2)}
  +\|\sigma_0\|_{H^3}\\
  &&+\|N_2(v_2,\sigma_2,p_2)\|_{L^2(0,T_0;H^2)})
  (\|\Delta(\sigma_1-\sigma_2)\|_{L^2(0,T_0;L^2)}+\|\sigma_1-\sigma_2\|_{L^\infty(\mathcal O\times(0,T_0))})\\
  &&+C\mathcal L_{T_0}^\frac12(\sigma_1)\sum_{j=1}^2\|N_j(v_1,\sigma_1,p_1)-N_j(v_2,\sigma_2,p_2)\|_{L^2(0,T_0;H^2)}\\
  &&+C\|N_3(v_1,p_1)-N_3(v_2,p_2)\|_{L^2(0,T_0; H^2)}\\
  &\leq&C_{X_0}(X_0+C_{M_0}T_0^\frac12)T_0^\frac12\|\sigma_1-\sigma_2\|_{\mathscr Y_{T_0}}\\
  &&+C_{X_0}C_{M_0}T_0^\frac12\|(v_1-v_2,\sigma_1-\sigma_2,p_1-p_2)\|_{\mathscr Y_{T_0}\times\mathscr Y_{T_0}\times
     \mathscr X_{T_0}}\\
  &\leq&\frac12\|(v_1-v_2,\sigma_1-\sigma_2,p_1-p_2)\|_{\mathscr Y_{T_0}\times\mathscr Y_{T_0}\times
     \mathscr X_{T_0}},
\end{eqnarray*}
by choosing $T_0$ sufficiently small depending on $X_0$ and $M_0$. Thus, $\mathfrak F$ is a contracting mapping from $\mathscr B_0$ to it self.
\end{proof}

\smallskip
{\bf Acknowledgments.}
R.K.~acknowledges support by Deutsche Forschungsgemeinschaft through Grant CRC 1114 ``Scaling Cascades in Complex Systems'', Project Number 235221301, Project A02 ``Multiscale data and asymptotic model assimilation for atmospheric flows'' and Grant FOR 5528 ``Mathematical Study of Geophysical Flow Models: Analysis and Computation'', Project No.~500072446, Project~2 ``Scale Analysis and Asymptotic Reduced Models for the Atmosphere''.
The work of J.L.~was supported in part by the National Natural Science Foundation of China (Grant No. 12371204) and the Key Project of National Natural Science Foundation of China (Grant No. 12131010). E.S.T.~has benefited from the inspiring environment of the CRC 1114 ``Scaling Cascades in Complex Systems'', Project Number 235221301, Project C09, funded by the Deutsche Forschungsgemeinschaft (DFG). Moreover, this work was also supported in part by the DFG Research Unit FOR 5528 on Geophysical Flows.

\end{document}